\documentclass{amsart}

\usepackage{appendix}
\usepackage{amssymb}

\usepackage{thmtools}
\usepackage{thm-restate}
\usepackage{hyperref}
\usepackage{cleveref}

\usepackage{mathrsfs}

\usepackage{graphicx}

\usepackage[cmtip,all]{xy}

\usepackage{amsthm}
\newtheorem{theorem}{Theorem}[section]

\newtheorem{lemma}[theorem]{Lemma}
\newtheorem{corollary}[theorem]{Corollary}
\newtheorem{proposition}[theorem]{Proposition}
\newtheorem*{conjecture*}{Conjecture}
\newtheorem{introthm}{Theorem}

\newtheorem{introcor}[introthm]{Corollary}

\theoremstyle{definition}
\newtheorem{definition}[theorem]{Definition} 

\newtheorem{remark}[theorem]{Remark}
\newtheorem*{question*}{Question}

\newcommand{\Rbb}{{\mathbb R}}
\newcommand{\Zbb}{{\mathbb Z}}

\newcommand{\Fbb}{{\mathbb F}}
\newcommand{\Ebb}{{\mathbb E}}
\newcommand{\Sbb}{{\mathbb S}}

\makeatletter
\def\l@subsection{\@tocline{2}{0pt}{2.5pc}{5pc}{}}
\makeatother

\usepackage[style=alphabetic,
maxbibnames=10]{biblatex}

\usepackage{booktabs}  
\usepackage{adjustbox}
\usepackage{tikz}
\usepackage{quiver}
\usepackage{hyperref}
\usepackage{mathtools}
\usepackage{enumitem}
\usepackage{soul}

\numberwithin{equation}{section}

\begin{document}

\title{$H\mathbb{Z}/4$ is not an $\mathbb{E}_2$-Thom Spectrum over the $2$-Complete Sphere Spectrum}


\author{Mattie Ji}
\address{University of Pennsylvania}
\curraddr{}
\email{mji13@sas.upenn.edu}
\thanks{}

\subjclass[2020]{Primary: 55P42, 55P43.}

\date{}

\dedicatory{}

\maketitle


\begin{abstract}
    For any prime number $p$, the classic Hopkins-Mahowald theorem asserts that $H\mathbb{Z}/p$ is an $\mathbb{E}_2$-Thom spectrum over the $p$-complete sphere spectrum. Kitchloo extended this result to show that $H\mathbb{Z}/p^k$ is an $\mathbb{E}_2$-Thom spectrum over the $p$-local sphere spectrum, except for the case when $(p, k) = (2, 2)$. We solve the remaining case by showing that $H\mathbb{Z}/4$ is not an $\mathbb{E}_2$-Thom spectrum over the $2$-complete sphere spectrum, and consequently the $2$-local sphere spectrum. This work was developed with substantial assistance from GPT-5.6 Sol and GPT-6 Astra, and an early draft was subsequently reviewed using Claude Fable 5.1.
\end{abstract}

\section{Introduction}

For any prime number $p$, the classic Hopkins-Mahowald theorem showed that $H\Zbb/p$ is an $\mathbb{E}_2$-Thom spectrum over the $p$-complete sphere spectrum $\Sbb^{\wedge}_p$. The case for $p = 2$ was originally due to Mahowald \cite{MAHOWALD1977249,Mahowald1979}, and Hopkins observed the case generalizes for $p > 2$.

Since then, there has been many interests in constructing various versions of Eilenberg-MacLane spectra as certain Thom spectra with structures. To name a few, the Hopkins-Mahowald theorem has been reproved in \cite{Mathew_2015} (Theorem 4.16) and \cite{Antol_n_Camarena_2018} (Section 5.1). Mao \cite{Mao_2023} constructed the Eilenberg-MacLane spectra of perfectoid rings as $\mathbb{E}_2$-Thom spectra over spherical Witt vectors. In the equivariant case, Behrens and Wilson \cite{Behrens2017AA} showed that the $C_2$-equivariant Eilenberg-MacLane spectrum associated to the constant $C_2$-Mackey functor $\underline{\Fbb_2}$ can be constructed as an analogous Thom spectrum, and this was generalized by Hahn and Wilson \cite{Hahn_2020} to any finite $p$-power cyclic group $G$. This was further generalized by Levy \cite{https://doi.org/10.1112/topo.12230} to any faithful representation of a $p$-group into $O(2)$.

The work of most interest to this paper is that of Kitchloo. In \cite{Kitchloo2020}, Kitchloo constructed an $\mathbb{E}_2$-Thom spectrum structure on $H\Zbb/p^k$ over the $p$-local sphere spectrum $\Sbb_{(p)}$ for all choices of $(p, k)$ except for the case when $(p, k) = (2, 2)$. To summarize the status so far:

\begin{theorem}[Hopkins-Mahowald, Kitchloo]\label{thm::known}
Let $p$ be a prime number and $k \geq 1$. For all choices of $(p, k) \neq (2, 2)$, $H\Zbb/p^k$ is an $\mathbb{E}_2$-Thom spectrum over $\Sbb_{(p)}$, and hence over $\Sbb^{\wedge}_p$.
\end{theorem}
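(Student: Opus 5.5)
This theorem is the known result of Hopkins--Mahowald and Kitchloo, so the plan is to recall their constructions and check that they give $\mathbb{E}_2$-Thom spectra over $\Sbb_{(p)}$. We then base change to $\Sbb^{\wedge}_p$.

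Fix a unit $u \in \pi_0(\Sbb_{(p)})^\times$, or more generally an $\mathbb{E}_2$-map
\[
f\colon \Omega^2 S^3 \to BGL_1(\Sbb_{(p)}).
\]
Since $\Omega^2 S^3$ is the free $\mathbb{E}_2$-space on $S^1$, such an $f$ is the $\mathbb{E}_2$-extension of a pointed map $S^1 \to BGL_1(\Sbb_{(p)})$, that is, of a class in $\pi_1 BGL_1 = \pi_0(\Sbb_{(p)})^\times = \Zbb_{(p)}^\times$. We choose the unit $1 - p^k$. The Thom spectrum $Mf$ is then an $\mathbb{E}_2$-ring spectrum by the Lewis/Ando--Blumberg--Gepner--Hopkins--Rezk formalism. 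The bottom cell gives $\pi_0(Mf) = \Zbb_{(p)}/(1-(1-p^k)) = \Zbb/p^k$, because the $1$-cell is attached by the map $1-u$ of degree $p^k$. We then produce the $\mathbb{E}_2$-map $Mf \to H\Zbb/p^k$ as the Postnikov truncation $\tau_{\le 0}$, which is $\mathbb{E}_2$ (in fact truncations of connective $\mathbb{E}_n$-rings are $\mathbb{E}_n$).

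It remains to show $Mf \to H\Zbb/p^k$ is an equivalence, and this is the key step. First, the Thom isomorphism argument must be made to work with $\mathbb{F}_p$-coefficients. Since $p^k \equiv 0$ mod $p$, the composite $S^1 \to BGL_1\Sbb_{(p)} \to BGL_1 H\Fbb_p$ is null. As this composite is $\mathbb{E}_2$ and extended from $S^1$, the whole map $\Omega^2S^3 \to BGL_1(H\Fbb_p)$ is null as an $\mathbb{E}_2$-map. This gives an $\mathbb{E}_2$ Thom isomorphism $H\Fbb_p \wedge Mf \simeq H\Fbb_p \wedge \Omega^2 S^3_+$.

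Comparing sizes, $H_*(\Omega^2S^3;\Fbb_p)$ has the same Poincaré series as $H_*(H\Zbb/p^k;\Fbb_p) = \mathcal{A}_*/\!/\ldots$ with two Bockstein-type generators in degrees $0$ and $1$. That is, it has the form $\mathcal{A}_*\otimes E(\tau_0)^{-1}$-ish patterns, tensored with $\Fbb_p[x]/\ldots$ in degrees $0,1$. The comparison is then reduced to showing that the map on mod $p$ homology is injective. Injectivity follows because the Dyer--Lashof operations on $H_*(Mf)$ generate $H_*(\Omega^2S^3)$ from the degree-$1$ class $x_1$ (Cohen's computation). So it suffices that $x_1$ maps nontrivially and that the $\mathbb{E}_2$ Dyer--Lashof operations $Q_1$ on the image in $\mathcal{A}_*$-type homology of $H\Zbb/p^k$ behave as in Steinberger's computation: $Q_1\bar\tau_i = \bar\tau_{i+1}$ at odd $p$, and $Q_1\bar\xi_i=\bar\xi_{i+1}$ at $p=2$.

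The main obstacle is the case $k \ge 2$. Here $H_*(H\Zbb/p^k;\Fbb_p)$ has classes $\bar\tau_0$ and $\beta_k$ tied to the $k$-th order Bockstein. The degree-$1$ class $x_1$ maps to the class detecting the $p^k$-attaching map, and one must check that its Dyer--Lashof iterates reach $\bar\xi_1$ or $\bar\tau_1$ and beyond. At $p=2$ with $k=2$ this fails: $Q_1$ applied to the degree-$1$ class lands in the wrong place, since $x_1^2 \neq 0$ interacts with $\xi_1^2$. This is exactly why $(2,2)$ is excluded. For $p$ odd, or for $k\ge 3$, I would follow Kitchloo and verify the first step $Q_1 x_1 \mapsto \bar\tau_1$ (resp.\ $\bar\xi_1^2$ modulo decomposables) by a secondary-operation calculation in the cofiber $\Sbb/p^k$. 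The rest then follows from the Nishida relations. Finally, both sides are $p$-local and connective, so a mod $p$ homology isomorphism is an equivalence. Smashing with $\Sbb^\wedge_p$ (base change of Thom spectra along $\Sbb_{(p)}\to\Sbb^\wedge_p$) gives the $p$-complete statement, since $H\Zbb/p^k$ is already $p$-complete.
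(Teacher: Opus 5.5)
The paper does not prove this statement; it quotes it from Hopkins--Mahowald and Kitchloo, so your argument has to stand on its own. For $k=1$ your argument is the standard Hopkins--Mahowald proof and is fine. (At odd $p$ you also need the Bockstein-type operation $\beta Q^1$, i.e.\ Cohen's $\zeta_1$, to reach the $\bar\xi_i$.) The base change to $\Sbb^\wedge_p$ is also fine.

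For $k\ge 2$, however, the key step is false, not merely unverified. The Thom spectrum of the $\mathbb{E}_2$-map $\Omega^2S^3\to\operatorname{BGL}_1(\Sbb_{(p)})$ extending $1-p^k$ is never $H\Zbb/p^k$.

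\begin{itemize}
\item By the Thom isomorphism you invoke, $H_*(Mf;\Fbb_p)\cong H_*(\Omega^2S^3;\Fbb_p)$ compatibly with products and $\mathbb{E}_2$-operations. As you say yourself, this is generated from $x_1$.
\item Under $Mf\to H\Zbb/p^k$, the class $x_1$ goes to the degree-one class $x$.
\item For $k\ge2$ the reduction $r\colon \Zbb/p^k\to\Zbb/p$ factors through $\Zbb/p^2$. Hence $\beta\circ r=0$, and $x$ maps to $0$ in $H_1(H\Fbb_p;\Fbb_p)$.
\item By naturality along the $\mathbb{E}_2$-maps $Mf\to H\Zbb/p^k\to H\Fbb_p$, every positive-degree class of $H_*(Mf;\Fbb_p)$ then maps to $0$ in $\mathcal{A}_*$.
\item But $H_*(H\Zbb/p^k;\Fbb_p)\cong H_*(H\Zbb;\Fbb_p)\otimes E(x)$ contains classes pulled back from $H\Zbb$ that map nontrivially to $\mathcal{A}_*$: $\bar\xi_1$ for $p$ odd, and $\bar\xi_1^2$ for $p=2$.
\end{itemize}

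So $Mf\to H\Zbb/p^k$ is not surjective on mod $p$ homology, and the matching Poincar\'e series is a coincidence.

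At $p=2$ you can see this directly. In $H_*(\Omega^2S^3;\Fbb_2)=\Fbb_2[x_1,x_3,x_7,\dots]$ we have $x_1^2\neq 0$. But $x^2=0$ in $H_*(H\Zbb/2^k;\Fbb_2)$, because $H_2(H\Zbb/2^k;\Fbb_2)$ injects into $\mathcal{A}_2$. Likewise $Q^2x_1=x_3\neq 0$, whereas for $k=2$ the paper uses $Q^2x=0$ in Lemma~\ref{lem::u2_not_zero}. So your proposed verification ``$Q_1x_1\mapsto\bar\tau_1$ (resp.\ $\bar\xi_1^2$)'' cannot succeed. At $p=2$ it is also off by degree, since $Q_1x_1$ sits in degree $3$ and $\bar\xi_1^2$ in degree $2$.

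Consequently your account of why $(2,2)$ is exceptional is also wrong. In your setup, $p=2$ fails for every $k\ge 2$, for the reason just given.

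A proof for $k\ge2$ needs a different $\mathbb{E}_2$-space $X$:
\begin{itemize}
\item its mod $p$ homology, with operations, should match $H_*(H\Zbb;\Fbb_p)\otimes E(x)$; the natural candidate has the homology of $\Omega^2S^3\langle 3\rangle\times S^1$, with the degree-one class coming from $S^1$;
\item it should carry an $\mathbb{E}_2$-map to $\operatorname{BGL}_1(\Sbb_{(p)})$ for which the elements $\rho(g)-1$ generate the ideal $(p^k)$.
\end{itemize}
Producing that $\mathbb{E}_2$-map is the real content, and it is governed by the low $k$-invariants of $\mathfrak{gl}_1$. At $(2,2)$ some $\rho(g)$ must be $\equiv 5 \bmod 8$. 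The paper shows, via $\kappa_2=\operatorname{Sq}^3\circ r_2$ and the transfer computation, that no such map exists. For $p$ odd or $k\ge 3$ this particular obstruction disappears (Remark~\ref{rem::explain}).
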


Given the status of Theorem~\ref{thm::known}, a natural question would be whether we could extend the construction to the last remaining case for $H\Zbb/4$. In this work, we show that the remaining case is not possible.

\begin{introthm}\label{thm::main}
    $H\Zbb/4$ is not an $\mathbb{E}_2$-Thom spectrum over $\mathbb{S}_{(2)}$ or $\mathbb{S}_{2}^{\wedge}$.
\end{introthm}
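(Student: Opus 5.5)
The strategy is to set up the space of all possible $\mathbb{E}_2$-Thom structures and show that none of them can produce the correct mod $2$ homology. Suppose $H\Zbb/4 \simeq Mf$, where $f\colon X \to BGL_1(\Sbb^{\wedge}_2)$ is an $\mathbb{E}_2$-map (a double loop map) from an $\mathbb{E}_2$-space $X$. The $2$-local case reduces to the $2$-complete one by completing, so I work $2$-completely throughout.

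\textbf{Step 1: identify $X$.} The Thom isomorphism gives $H_*(Mf;\Fbb_2)\cong H_*(X;\Fbb_2)$. The unit map $\Sbb \to Mf$ is induced by the basepoint of $X$. Since $\pi_0(Mf)=\Zbb/4$ while $\pi_0(\Sbb^\wedge_2)=\Zbb_2$ acts on it through the Thom class, $X$ must be connected. Indeed, the components of $X$ would contribute summands to $\pi_0$, and $\pi_0 Mf$ is cyclic generated by the unit.

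Because $f$ is a map of $\mathbb{E}_2$-spaces, it factors through a double loop map. Precomposing with the universal case, $X \to BGL_1(\Sbb^\wedge_2)$ restricted to the bottom cells is detected by $\pi_1 BGL_1(\Sbb^\wedge_2)=\Zbb_2^\times$. For $\pi_0(Mf)=\Zbb/4$, the attaching map of the first cell must be degree $4$, i.e. $\pi_1(X)\to \Zbb_2^\times$ must hit an element $u$ with $1-u$ of valuation exactly $2$, namely $u\equiv 5 \pmod 8$ up to sign $-1$ times.

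\textbf{Step 2: use the Dyer--Lashof structure.} $H_*(X;\Fbb_2)$ is a commutative algebra with Araki--Kudo operation $Q_1$ from the $\mathbb{E}_2$-structure. The Thom isomorphism is compatible with these operations, twisted by $f$. On the other side, $H_*(H\Zbb/4;\Fbb_2)\cong \Fbb_2[\xi_1^2,\xi_2,\xi_3,\dots]\otimes E(\bar\tau)$, writing it in conjugate Milnor generators. The key point is that in $H\Zbb/4$ the Bockstein-type class in degree $1$ is $\bar\xi_1$ (tied to the degree-$4$ cell). Its square $\bar\xi_1^2$ must equal $Q_1$ of the degree-$1$ generator $x_1\in H_1(X)$, up to the twisting correction coming from $f$. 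For a connected $X$ with $H_1(X;\Fbb_2)$ one-dimensional, $Q_1x_1$ is forced; one computes the correction term from the first $k$-invariant of $BGL_1\Sbb$. That is where $\eta$ enters: the composite $S^1\to BGL_1$ followed by the $\mathbb{E}_2$-structure sees the class $\eta$ via the Hopf map $\Omega^2S^3\to\dots$.

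\textbf{Step 3: derive the contradiction.} By freeness, the $\mathbb{E}_2$-map $f$ is determined on the generating bottom class by a map $\Omega^2 S^3\langle\text{twisted}\rangle\to BGL_1$. I compare with the Mahowald case: for $p=2$, $k=1$ the unit $-1$ gives the $2$-cell, and $\eta$ appears in the second cell. For $u\equiv 5\bmod 8$ (the $k=2$ case), the $\mathbb{E}_2$-extension requires a nullhomotopy of the Browder bracket/$Q_1$ obstruction in $\pi_2 BGL_1(\Sbb^\wedge_2)=\pi_1\Sbb=\Zbb/2\{\eta\}$. I would show this obstruction equals $\eta$ times a unit mod $2$ (for $u=5$) and is incompatible with the homology requirement from Step 2: either $\mathrm{Sq}^2$ on the Thom class fails to match $H\Zbb/4$, or $\bar\xi_1^2$ is missing. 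Concretely, I would compute $\mathrm{Sq}^2$ acting in $H^*(Mf)$ on the degree-$2$ class: $\mathcal{A}$-module structure of $H^*(H\Zbb/4)=\mathcal{A}/\!/ \mathcal{A}(\mathrm{Sq}^1)$-like with a higher Bockstein, versus what the Thom spectrum over the $3$-skeleton of $X$ gives.

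The main obstacle is Step 3. Pinning down the obstruction class for $u\equiv5$ requires knowing the $\mathbb{E}_2$ (equivalently $\Omega^2$) structure on $BGL_1$ near the bottom, i.e. the relation between $\mathrm{Sq}^2$ on Thom classes and $\eta$-multiplication in $\Sbb^\wedge_2$. I would first try to reduce to a $3$-cell complex and compare cohomology operations directly.
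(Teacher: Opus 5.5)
\textbf{Verdict: there is a genuine gap.} Steps 2 and 3 do not contain an argument, and the obstruction they aim at is zero.

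\textbf{What is fine.} The reduction to $\Sbb^\wedge_2$ and to connected $X$ is fine. You do, however, need all of $\rho(\pi_1X)$ to lie in $1+4\Zbb_2$, not merely some element $\equiv 5 \pmod 8$ ``up to sign''. Any element $\equiv 3\pmod 4$ would force $\pi_0Mf$ to be a quotient of $\Zbb/2$.

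\textbf{Step 2.} Three things go wrong here.
First, the degree-one class of $H_*(H\Zbb/4;\Fbb_2)$ is exterior (it is your $\bar\tau$), not $\bar\xi_1$.
Second, $\bar\xi_1^2$ has degree $2$ while $Q_1x_1=Q^2x_1$ has degree $3$, so the proposed identity is not even homogeneous.
Third, there is no twisting correction. Since $Mf\simeq H\Zbb/4\to H\Fbb_2$ is an $\Ebb_2$-map, the Thom isomorphism $H\Fbb_2\otimes Mf\simeq H\Fbb_2\otimes\Sigma^\infty_+X$ is an equivalence of $\Ebb_2$-$H\Fbb_2$-algebras. So $H_*(X;\Fbb_2)$ simply inherits $x_1^2=0$ and $Q^2x_1=0$ from $H\Zbb/4$. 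Your instinct to look at the top operation on $x_1$ is the right homological input, but it enters as a vanishing statement.

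\textbf{Step 3.} $X$ is an arbitrary $\Ebb_2$-space, so there is no freeness to exploit. Restricting along an $\Ebb_2$-map $\Omega^2S^3\to X$ gives nothing: for any $u\in 1+4\Zbb_2$, the $\Ebb_2$-extension $\Omega^2S^3\to\operatorname{BGL}_1(\Sbb^\wedge_2)$ exists, and its Thom spectrum does admit an $\Ebb_2$-map to $H\Zbb/4$, because $u$ becomes trivial in $\operatorname{GL}_1(H\Zbb/4)$.

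More seriously, for $u\in1+4\Zbb_2$ there is nothing nonzero to find in $\pi_2\operatorname{BGL}_1(\Sbb^\wedge_2)=\Zbb/2\{\eta\}$:
the first $k$-invariant of $\mathfrak{gl}_1(\Sbb^\wedge_2)$ is $\operatorname{Sq}^2\circ\mathrm{sgn}$, which vanishes on $1+4\Zbb_2$;
and $\eta$ acts by zero from $\pi_1$ to $\pi_2$ of $\mathfrak{gl}_1$, since $\eta\cdot\psi_1(\eta)=\psi_2(2\eta^2)=0$.
A nonzero $\operatorname{Sq}^2$ on the Thom class only says $w_2(f)\neq 0$. That is a condition $f$ must satisfy, not a contradiction, so a low-skeleton comparison of $\operatorname{Sq}^2$ does not by itself produce one.

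\textbf{What is missing.} The real obstruction lives one level higher, in $\pi_2\mathfrak{gl}_1(\Sbb^\wedge_2)=\Zbb/2\{\eta^2\}$, linked to $\pi_0$ by a $\operatorname{Sq}^3$. It only becomes visible after delooping twice. The paper sets $Y=B^2X$ and lets $\alpha\in H^3(Y;\Fbb_2)$ be the class of $Y\to B^3\operatorname{GL}_1(\Sbb^\wedge_2)\to K(\Zbb_2^\times,3)\to K(\Fbb_2,3)$, using $1+4\Zbb_2\cong\Zbb_2$. This $\alpha$ pairs to $1$ with $\sigma_*^2x_1$. Two contradictory facts follow.

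(i) From $Q^2x_1=0$ one gets $(\sigma_*x_1)^2=0$ in $H_4(BX;\Fbb_2)$. The bar-construction cycle $[V|V]+[H]$ then gives a class in $H_6(Y;\Fbb_2)$ with reduced coproduct $\sigma_*^2x_1\otimes\sigma_*^2x_1$, so $\alpha^2\neq0$.

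(ii) Let $E$ be the pullback of $\mathfrak{gl}_1(\Sbb^\wedge_2)\to H\Zbb_2^\times\leftarrow H\Zbb_2$. Its $k$-invariant from $\pi_0$ to $\pi_2$ is $\operatorname{Sq}^3\circ r_2$. Since $g$ lifts to $\Omega^\infty\Sigma^3E$, this forces $\alpha^2=\operatorname{Sq}^3\alpha=0$.

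The nonvanishing of that $k$-invariant is the computational heart, and it is detected by the multiplicative transfer, not by $\eta$. If the $k$-invariant vanished, a section $H\Zbb_2\to\tau_{\le2}E$ would force $p_\otimes(5)$ to restrict to $\underline{25}$ on $\Rbb P^2$. But $P_2(\underline{5})=\underline{25}+10y$, and $10([L]-1)\neq0$ in $\operatorname{KO}^0(\Rbb P^2)$. None of these ingredients appears in your plan: the double delooping, the class $\alpha$, the $\operatorname{Sq}^3$ $k$-invariant, or a way to compute it. Step 3 cannot be completed along the lines you indicate.
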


Combining Theorem~\ref{thm::main} with Theorem~\ref{thm::known}, we have the following corollary.
\begin{introcor}
The spectrum $H\Zbb/p^k$ is an $\mathbb{E}_2$-Thom spectrum over $\mathbb{S}^{\wedge}_p$ if and only if $(p, k) \neq (2, 2)$.
\end{introcor}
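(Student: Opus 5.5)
The corollary has two directions, and I would prove them separately. The ``only if'' direction is immediate from Theorem~\ref{thm::main}. If $(p,k)=(2,2)$, then $H\Zbb/p^k = H\Zbb/4$, and Theorem~\ref{thm::main} says exactly that this is not an $\mathbb{E}_2$-Thom spectrum over $\Sbb^{\wedge}_2$. So the existence of such a structure forces $(p,k)\neq(2,2)$. The real content of this direction lies in Theorem~\ref{thm::main} itself, which I take as given here.

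For the ``if'' direction, suppose $(p,k)\neq(2,2)$. Theorem~\ref{thm::known} already states the $p$-complete conclusion, but I would justify the passage from $\Sbb_{(p)}$ to $\Sbb^{\wedge}_p$ explicitly, since that is the only step the corollary adds.

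\begin{enumerate}
\item By Theorem~\ref{thm::known} there is an $\mathbb{E}_2$-map $f\colon X\to BGL_1(\Sbb_{(p)})$ whose Thom spectrum $Mf$ is equivalent to $H\Zbb/p^k$ as an $\mathbb{E}_2$-$\Sbb_{(p)}$-algebra.
\item The completion map $\Sbb_{(p)}\to\Sbb^{\wedge}_p$ is a map of $\mathbb{E}_\infty$-rings. It therefore induces an $\mathbb{E}_\infty$-map $BGL_1(\Sbb_{(p)})\to BGL_1(\Sbb^{\wedge}_p)$. Composing gives an $\mathbb{E}_2$-map $f'\colon X\to BGL_1(\Sbb^{\wedge}_p)$.
\item Thom spectra are compatible with base change, for instance as colimits of the associated functors to module categories. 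Hence there is an equivalence of $\mathbb{E}_2$-$\Sbb^{\wedge}_p$-algebras
\[ Mf' \simeq Mf\otimes_{\Sbb_{(p)}}\Sbb^{\wedge}_p \simeq H\Zbb/p^k\otimes_{\Sbb_{(p)}}\Sbb^{\wedge}_p .\]
\item It remains to show that the unit map $H\Zbb/p^k\to H\Zbb/p^k\otimes_{\Sbb_{(p)}}\Sbb^{\wedge}_p$ is an equivalence. Let $C$ be the cofiber of $\Sbb_{(p)}\to\Sbb^{\wedge}_p$.
\begin{itemize}
\item The map becomes an equivalence after smashing with the Moore spectrum $\Sbb/p$.
\item Also, $p$ is already invertible on primes other than $p$ in $\Sbb_{(p)}$.
\item Together these show that $C$ is a rational spectrum: multiplication by $p$ on $C$ is an equivalence, and every other prime acts invertibly.
\end{itemize}
Then $H\Zbb/p^k\otimes_{\Sbb_{(p)}}C=0$, because $p^k$ acts both as zero and invertibly on it. So the unit map is an equivalence. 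This is compatible with the $\mathbb{E}_2$-structure, since the unit is an $\mathbb{E}_2$-algebra map.
\end{enumerate}

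Consequently $H\Zbb/p^k\simeq Mf'$ is an $\mathbb{E}_2$-Thom spectrum over $\Sbb^{\wedge}_p$. Both directions are then complete.

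I expect no serious obstacle beyond Theorem~\ref{thm::main}. The only point requiring care is step (3), the base-change compatibility of Thom spectra. I would handle it by writing the Thom spectrum as the colimit of $X\to BGL_1(R)\to \mathrm{Mod}_R$ and using that $-\otimes_{\Sbb_{(p)}}\Sbb^{\wedge}_p$ is a symmetric monoidal left adjoint, so it commutes with that colimit and preserves the $\mathbb{E}_2$-structure.
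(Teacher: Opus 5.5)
Your proposal is correct and follows the paper's route. The paper obtains the corollary by combining Theorem~\ref{thm::main} for the ``only if'' direction with Theorem~\ref{thm::known} for the ``if'' direction. Your explicit passage from $\Sbb_{(p)}$ to $\Sbb^{\wedge}_p$ is the same change-of-rings argument the paper gives in Section~\ref{sec::reduction}. The only difference is that you kill the cofiber term by showing it is rational, whereas the paper uses that $\Sbb_{(2)} \to \Sbb^{\wedge}_2$ is an $H\Fbb_2$-homology isomorphism.
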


\begin{remark}
    It is possible to directly show that $H\Zbb/4$ is not a Thom spectrum at all over $\mathbb{S}$, see Section 9.2 of \cite{blumberg_einf}. It is therefore more interesting to pass to the $p$-local or $p$-complete case, as in the Hopkins-Mahowald theorem or Kitchloo's paper \cite{Kitchloo2020}.
\end{remark}

\begin{remark}
One motivation for constructing these Thom spectra is from topological Hochschild homology (THH). The main theorem of \cite{Blumberg_2010} gives a way to compute the THH of $\mathbb{E}_2$-Thom spectra. In the case of $H\Zbb/p$, this yields another proof of the classic Bökstedt periodicity theorem \cite{Bokstedt1986}. For instance, see \cite{KrauseNikolaus2018LecturesTHH}, \cite{Krause_Nikolaus_2022}, or \cite{cmak_lecture} for detailed explanations of this application. In the case of $H\Zbb/p^k$ for $(p, k) \neq (2, 2)$, the main result of \cite{Kitchloo2020} gave an alternative way to compute $\operatorname{THH}(\Zbb/p^k)$, which had previously been computed in \cite{Pirashvili01011995, BRUN200029}. Theorem~\ref{thm::main} rules out extending this approach to $H\Zbb/4$ using an $\Ebb_2$-Thom spectrum structure over $\Sbb^{\wedge}_2$. It is interesting to wonder whether a Thom spectrum approach in general could be developed to compute $\operatorname{THH}(\Zbb/4)$.
\end{remark}

\noindent \textbf{Proof and Content Outline.} In Section~\ref{sec::background}, we will introduce relevant background in Thom spectra, (co)homological suspensions, Dyer-Lashof operations, and transfer maps, which will be used in the proof of Theorem~\ref{thm::main}. Section~\ref{sec::background} also includes a detailed computation of a specific transfer map that will be important in the proof. Section~\ref{sec::reduction} reduces Theorem~\ref{thm::main} to the case over $\Sbb_{2}^{\wedge}$ with a path-connected space $X$.

The core of the proof is developed in Section~\ref{sec::construct_alpha} and Section~\ref{sec::failure}. In Section~\ref{sec::construct_alpha}, we assume for contradiction that $H\Zbb/4$ is an $\mathbb{E}_2$-Thom spectrum over $\Sbb^{\wedge}_2$ and construct a cohomology class $\alpha$. In Section~\ref{sec::failure}, we show simultaneously that $\alpha^2 \neq 0$ and $\alpha^2 = 0$, which will cause a contradiction. Finally, in Remark~\ref{rem::explain}, we explain conceptually why the proof of Theorem~\ref{thm::main} cannot be generalized to any other choice of $(p, k)$, so the case of $(p, k) = (2, 2)$ is truly anomalous.\\

\noindent \textbf{Acknowledgements.} This work was developed with substantial assistance from OpenAI’s ChatGPT, using GPT-5.6 Sol and GPT-6 Astra. These tools were used to explore and develop proof strategies, carry out mathematical computations, and identify relevant references. The author (MJ) critically assessed the proposed arguments, revised and simplified them, and carefully verified the proofs presented here. MJ wrote the initial manuscript in their own words and subsequently used these tools for mathematical and editorial review of successive drafts. MJ takes full responsibility for the final mathematical content and any errors.

An early draft was subsequently reviewed using Claude Fable 5.1. MJ thanks Yongxi (Aaron) Lin for providing access to Claude Fable 5.1 and sharing the resulting feedback for use in revising the manuscript.

MJ would like to thank Mark Behrens for helpful discussions on mathematical details in this work, Andrew Blumberg for helpful comments on an early draft, Preston Cranford for an inspiring conversation on a train that motivated this work, Nitu Kitchloo for helpful conversations on an early draft and suggestions to use Claude Fable 5.1, and Catherine Li for helpful conversations on power operations, transfers, and the Hopkins-Mahowald theorem. MJ is partially supported by the National Science Foundation Graduate Research Fellowship (DGE-2236662).

\section{Background}\label{sec::background}

In this section, we will cover some preliminary materials that will be used in the proof of Theorem~\ref{thm::main}.

\subsection{Thom Spectra and Units of Ring Spectra}

Let $X$ be a space and $R$ be, for simplicity, a (connective) $\mathbb{E}_{\infty}$-ring spectrum. We use $\mathfrak{gl}_1(R)$ to denote the associated spectrum of units, which has the property that $\pi_0(\mathfrak{gl}_1(R)) = \pi_0(R)^{\times}$ and $\pi_i(\mathfrak{gl}_1(R)) \cong \pi_i(R)$ for $i > 0$. We write $\operatorname{GL}_1(R)$ to denote the zeroth space $\Omega^{\infty} \mathfrak{gl}_1(R)$ and note that $\operatorname{BGL}_1(R)$ is the identity component in $\operatorname{Pic}(R)$.

\begin{definition}[Definition 1.4 of \cite{Ando_2013}, see also Definition 3.1 of \cite{Antol_n_Camarena_2018}]
Let $f: X \to \operatorname{Pic}(R)$ be a map (this is also called a local system of invertible $R$-modules over $X$). The \textit{Thom spectrum} of $f$ is
\[M_{R} f \coloneqq \operatorname{colim}(X \to \operatorname{Pic}(R) \to \operatorname{Mod}_{R}).\]
When the base ring $R$ is clear, we use $Mf$ to denote $M_{R} f$ instead. We say $Mf$ is an \textit{$\mathbb{E}_{n}$-Thom spectrum over $R$} if $X$ has an $\mathbb{E}_n$-structure such that $f$ is an $\mathbb{E}_n$-map. 
\end{definition}

The definition for an $\mathbb{E}_n$-Thom spectrum above is justified by the following.

\begin{theorem}[\cite{Lewis1978, LMS1986}, see also Corollary 3.2 of \cite{Antol_n_Camarena_2018}]
Suppose $X$ has an $\mathbb{E}_n$-structure such that $f$ is an $\mathbb{E}_n$-map, then $Mf$ is an $\mathbb{E}_n$-$R$-algebra.
\end{theorem}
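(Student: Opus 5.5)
The plan is to prove the statement by showing that the Thom spectrum construction is a symmetric monoidal functor. A symmetric monoidal functor carries $\mathbb{E}_n$-algebras to $\mathbb{E}_n$-algebras for every $n$, so the result follows once this is established. Concretely, I would view $M_R$ as a functor
\[M_R\colon \mathcal{S}_{/\operatorname{Pic}(R)} \longrightarrow \operatorname{Mod}_R,\qquad (f\colon X\to \operatorname{Pic}(R))\longmapsto \operatorname{colim}(X\to\operatorname{Pic}(R)\to\operatorname{Mod}_R).\]
I would show it is symmetric monoidal when $\mathcal{S}_{/\operatorname{Pic}(R)}$ is given the following structure. For $f\colon X\to\operatorname{Pic}(R)$ and $g\colon Y\to\operatorname{Pic}(R)$, the product $f\otimes g$ is the composite $X\times Y\to \operatorname{Pic}(R)\times\operatorname{Pic}(R)\xrightarrow{\otimes}\operatorname{Pic}(R)$, using that $\operatorname{Pic}(R)$ is an $\mathbb{E}_\infty$-space since $R$ is $\mathbb{E}_\infty$.

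The first step is to check this on objects. We have
\[M(f\otimes g)=\operatorname{colim}_{(x,y)\in X\times Y} f(x)\otimes_R g(y)\simeq \operatorname{colim}_{x\in X}\operatorname{colim}_{y\in Y} f(x)\otimes_R g(y)\simeq Mf\otimes_R Mg.\]
This holds because $\otimes_R$ preserves colimits separately in each variable. The unit $\ast\to\operatorname{Pic}(R)$ picking out $R$ is sent to $R$.

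The second step is to promote this to a coherent symmetric monoidal structure. Since $\operatorname{Pic}(R)$ is an $\infty$-groupoid, straightening gives $\mathcal{S}_{/\operatorname{Pic}(R)}\simeq\operatorname{Fun}(\operatorname{Pic}(R),\mathcal{S})=\mathcal{P}(\operatorname{Pic}(R))$. Under this equivalence the product above corresponds to Day convolution. Also, $M_R$ is the unique colimit-preserving extension of the inclusion $\operatorname{Pic}(R)\hookrightarrow\operatorname{Mod}_R$, i.e.\ the left Kan extension along the Yoneda embedding. The inclusion $\operatorname{Pic}(R)\hookrightarrow \operatorname{Mod}_R$ is symmetric monoidal. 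Moreover, $\operatorname{Mod}_R$ is presentably symmetric monoidal, so its tensor product preserves colimits in each variable. I would then invoke the universal property of Day convolution (Lurie, \emph{Higher Algebra} 4.8.1.10). That result says colimit-preserving symmetric monoidal functors $\mathcal{P}(C)\to\mathcal{D}$ correspond to symmetric monoidal functors $C\to\mathcal{D}$. This yields the coherent symmetric monoidal structure on $M_R$, recovering the object-level formula of the first step.

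The final step is to identify $\mathbb{E}_n$-algebras in $\mathcal{S}_{/\operatorname{Pic}(R)}$ with pairs consisting of an $\mathbb{E}_n$-space $X$ and an $\mathbb{E}_n$-map $f\colon X\to\operatorname{Pic}(R)$. This is the standard fact that algebras in a slice of a cartesian monoidal category over a commutative algebra $A$ are algebras equipped with an algebra map to $A$ (Lurie, \emph{Higher Algebra} 2.2.2.4 and 3.4.1). Applying $\operatorname{Alg}_{\mathbb{E}_n}(M_R)$ then gives an $\mathbb{E}_n$-algebra structure on $Mf$ in $\operatorname{Mod}_R$, i.e.\ an $\mathbb{E}_n$-$R$-algebra. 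I expect the main obstacle to be the coherence in the second step: producing the $\infty$-categorical symmetric monoidal structure rather than only the pointwise equivalence $M(f\otimes g)\simeq Mf\otimes Mg$. This is why I would route the argument through Day convolution on presheaves.
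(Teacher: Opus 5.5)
Your proposal is correct and is essentially the standard $\infty$-categorical proof behind the paper's citation. The paper gives no argument of its own: it defers to the classical point-set treatment of Lewis and Lewis--May--Steinberger, and to Corollary~3.2 of Antol\'{\i}n-Camarena--Barthel, where the $\mathbb{E}_n$-structure on $Mf$ likewise comes from the fact that the colimit of a monoidal diagram $X\to\operatorname{Pic}(R)\to\operatorname{Mod}_R$ in the presentably symmetric monoidal $\operatorname{Mod}_R$ inherits an algebra structure.

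One input you assert rather than prove: that straightening carries the slice structure on $\mathcal{S}_{/\operatorname{Pic}(R)}$ to Day convolution on $\mathcal{P}(\operatorname{Pic}(R))$. This is standard, but it carries part of the coherence, since it is what identifies Day-convolution $\mathbb{E}_n$-algebras with $\mathbb{E}_n$-maps $X\to\operatorname{Pic}(R)$. You should cite it, or check it via the same universal property.
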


We also employ the following version of Thom's isomorphism theorem.
\begin{theorem}[\cite{Lewis1978, LMS1986}, see also Lemma 3.15 and Proposition 3.16 of \cite{Antol_n_Camarena_2018}]\label{thm::thom_iso}
Suppose $X$ is grouplike and $Mf$ is an $\mathbb{E}_n$-Thom spectrum over $R$. Let $A$ be an $\mathbb{E}_{n+1}$-$R$-algebra, then any $\mathbb{E}_n$-$R$-algebra map $Mf \to A$ gives an equivalence of $\mathbb{E}_n$-$A$-algebras\[A \otimes_{R} Mf \simeq A \otimes \Sigma^{\infty}_+X.\]
\end{theorem}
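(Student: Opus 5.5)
The plan is to reduce the statement to the observation that an $\mathbb{E}_n$-algebra map out of a Thom spectrum is the same thing as an $\mathbb{E}_n$-trivialization of the base-changed local system. The first step is base change. Writing $f_A$ for the composite $X \xrightarrow{f} \operatorname{Pic}(R) \xrightarrow{A \otimes_R -} \operatorname{Pic}(A)$, the functor $A \otimes_R -\colon \operatorname{Mod}_R \to \operatorname{Mod}_A$ preserves colimits. It follows that
\[A \otimes_R Mf \simeq \operatorname{colim}\bigl(X \xrightarrow{f_A} \operatorname{Pic}(A) \to \operatorname{Mod}_A\bigr) = M_A f_A.\]
Because $A \otimes_R -$ is symmetric monoidal, $f_A$ is again an $\mathbb{E}_n$-map, and this identification is one of $\mathbb{E}_n$-$A$-algebras. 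Similarly, if $c\colon X \to \operatorname{Pic}(A)$ denotes the constant map at the unit, then $M_A c \simeq A \otimes \Sigma^\infty_+ X$ as $\mathbb{E}_n$-$A$-algebras. So it suffices to produce an equivalence $f_A \simeq c$ of $\mathbb{E}_n$-maps $X \to \operatorname{Pic}(A)$, and then apply the (symmetric monoidal) Thom spectrum functor to that homotopy.

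The second step converts the given $\mathbb{E}_n$-$R$-algebra map $\phi\colon Mf \to A$ into such a homotopy. The first move is to extend $\phi$ $A$-linearly to an $\mathbb{E}_n$-$A$-algebra map $M_A f_A \simeq A \otimes_R Mf \to A$. This is where the $\mathbb{E}_{n+1}$-hypothesis on $A$ enters: the multiplication $A \otimes_R A \to A$ must be an $\mathbb{E}_n$-map. The next move is to invoke the universal property of Thom spectra, in the form of Antolín-Camarena–Barthel. For $X$ grouplike, $f_A$ factors through the grouplike $\mathbb{E}_n$-space $\operatorname{BGL}_1(A)$, and the space of $\mathbb{E}_n$-$A$-algebra maps $M_A f_A \to A$ is equivalent to the space of $\mathbb{E}_n$-nullhomotopies of $f_A\colon X \to \operatorname{BGL}_1(A)$. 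Equivalently, it is the space of $\mathbb{E}_n$-lifts of $X$ to the fibre of $\operatorname{BGL}_1(A)$ over the point. The homotopy obtained this way is the equivalence $f_A \simeq c$ required in the first step.

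For a concrete description of the resulting map, one can instead use the Thom diagonal $Mf \to Mf \otimes \Sigma^\infty_+ X$. This is an $\mathbb{E}_n$-map induced by the diagonal of $X$. Composing it with $A \otimes_R -$ and then with $\phi$ and the multiplication of $A$ gives
\[A \otimes_R Mf \to A \otimes_R Mf \otimes \Sigma^\infty_+ X \to A \otimes \Sigma^\infty_+ X.\]
To check that this map is an equivalence, we may work fibrewise over $X$. On the fibre over a point $x$, the map is $A \otimes_R f(x) \to A$ induced by $\phi$, which is an equivalence of invertible $A$-modules. This uses grouplikeness: every component of $X$ contains a point reached from the unit, where $\phi$ restricts to the unit of $A$, and translation by the group structure carries this to all fibres.

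The main obstacle is the universal property in the second step: identifying $\mathbb{E}_n$-algebra maps out of $M_A f_A$ with $\mathbb{E}_n$-nullhomotopies, including all coherences. I would not reprove it. Instead I would cite Lemma 3.15 and Proposition 3.16 of \cite{Antol_n_Camarena_2018}, which go back to \cite{Lewis1978, LMS1986}, and treat the remaining steps as formal consequences of the symmetric monoidality of the Thom spectrum functor.
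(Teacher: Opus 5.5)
Your outline is correct and is essentially the argument behind the paper's treatment. The paper gives no proof of its own here; it cites \cite{Lewis1978, LMS1986} and Lemma 3.15 and Proposition 3.16 of \cite{Antol_n_Camarena_2018}, and your steps match the argument there: base change to $M_A f_A$ as in Theorem~\ref{thm::change-of-rings}, an $\mathbb{E}_n$-trivialization of $f_A$ obtained from the universal property, and the identification of the Thom spectrum of the constant map with $A\otimes\Sigma^\infty_+X$.

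Two small corrections, neither of which affects the argument. First, since $A$ is only $\mathbb{E}_{n+1}$, the functors $A\otimes_R-$ and $M_A$ are $\mathbb{E}_n$-monoidal rather than symmetric monoidal, which is all you actually use. Second, for non-connected grouplike $X$ the map $f_A$ lands in $\operatorname{BGL}_1(A)$ only as a consequence of the trivialization, not a priori, so the universal property should be stated with $\operatorname{Pic}(A)$ in place of $\operatorname{BGL}_1(A)$.
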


\begin{remark}
Theorem~\ref{thm::thom_iso} is an $\mathbb{E}_n$-refinement of Corollary 2.26 of \cite{Ando_2013}.    
\end{remark}

It is implicit in the proof of Proposition 3.16 of \cite{Antol_n_Camarena_2018} and also explicit in Proposition 2.1.3 of \cite{Devalapurkar_2024} that the following change-of-rings formula for Thom spectra hold.
\begin{theorem}[Change-of-Rings Formula for Thom Spectra]\label{thm::change-of-rings}
    For simplicity, let $R \to R'$ be a map of $\mathbb{E}_{\infty}$-rings, then there is an equivalence
    \[M_{R'}(X \xrightarrow{f} \operatorname{Pic}(R) \to \operatorname{Pic}(R')) \cong M_{R}(X \xrightarrow{f} \operatorname{Pic}(R)) \otimes_{R} R'. \]
\end{theorem}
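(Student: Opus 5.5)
The plan is to prove the Change-of-Rings Formula (Theorem~\ref{thm::change-of-rings}) by comparing the two colimit diagrams in $\operatorname{Mod}_{R'}$. The functor $-\otimes_R R' \colon \operatorname{Mod}_R \to \operatorname{Mod}_{R'}$ is symmetric monoidal and sends invertible $R$-modules to invertible $R'$-modules. Restricting to the maximal subgroupoids of invertible objects, it therefore induces the map $\operatorname{Pic}(R) \to \operatorname{Pic}(R')$ appearing in the statement. In particular, we have a commutative square
\[
\begin{array}{ccc}
\operatorname{Pic}(R) & \longrightarrow & \operatorname{Mod}_R \\
\downarrow & & \downarrow{\scriptstyle -\otimes_R R'} \\
\operatorname{Pic}(R') & \longrightarrow & \operatorname{Mod}_{R'},
\end{array}
\]
and commutativity holds essentially by the definition of the map $\operatorname{Pic}(R) \to \operatorname{Pic}(R')$.

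With the square in hand, the key step is to note that $-\otimes_R R'$ is a left adjoint; its right adjoint is restriction of scalars along $R \to R'$. Hence it preserves all colimits, in particular colimits indexed by the $\infty$-groupoid $X$. Precomposing the square with $f$ gives a natural equivalence between the composite $X \to \operatorname{Pic}(R') \to \operatorname{Mod}_{R'}$ and the composite $X \to \operatorname{Pic}(R) \to \operatorname{Mod}_R \xrightarrow{-\otimes_R R'} \operatorname{Mod}_{R'}$. Taking colimits gives
\[
M_{R'}(X \to \operatorname{Pic}(R')) \simeq \operatorname{colim}_X\bigl((-)\otimes_R R'\bigr) \simeq \bigl(\operatorname{colim}_X (-)\bigr)\otimes_R R' = M_R f \otimes_R R'.
\]

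The only point requiring care is making the commutativity of the square precise $\infty$-categorically, rather than merely objectwise. I would handle this by taking $\operatorname{Pic}(R)$ to be the core of the full subcategory of $\operatorname{Mod}_R$ on the invertible objects. Then the map to $\operatorname{Pic}(R')$ is by definition the restriction of the functor $-\otimes_R R'$ to these subcategories, so the square commutes on the nose.

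If one also wants multiplicativity, the same argument works. When $f$ is an $\mathbb{E}_n$-map, the square is one of $\mathbb{E}_n$-monoidal functors, and $-\otimes_R R'$ is symmetric monoidal and colimit preserving. The equivalence is then one of $\mathbb{E}_n$-$R'$-algebras, since in both cases the algebra structure comes from the symmetric monoidal left Kan extension.
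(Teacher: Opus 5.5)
Your proposal is correct and follows the standard argument. The paper gives no proof of its own and defers to Proposition 3.16 of \cite{Antol_n_Camarena_2018} and Proposition 2.1.3 of \cite{Devalapurkar_2024}, which argue as you do: base change $-\otimes_R R'$ is symmetric monoidal, restricts to $\operatorname{Pic}(R)\to\operatorname{Pic}(R')$, and preserves colimits because it is a left adjoint. Your $\mathbb{E}_n$ remark also points the right way. It matters because the paper later uses the equivalence as an $\mathbb{E}_2$-equivalence, and making it precise goes through the universal property of the symmetric monoidal, colimit-preserving extension of $\operatorname{Pic}(R)\to\operatorname{Mod}_{R'}$.
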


In nice cases, the Thom spectrum $Mf$ may admit a description in terms of the homotopy orbits of $R$ by some group action from $G$.
\begin{proposition}[Theorem 1.17 of \cite{Ando_2013}]\label{prop::quotient}
    Suppose $f$ factors through $\operatorname{BGL}_1(R)$ and $X = BG$, where $G$ is a group-like $A_{\infty}$-space. There is an action of $G$ on $R$ by $\Omega f: G \simeq \Omega BG \to \Omega \operatorname{BGL}_1(R) \simeq \operatorname{GL}_1(R)$ such that
    \[Mf \simeq R_{hG}.\]
\end{proposition}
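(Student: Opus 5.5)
The plan is to identify $Mf$ with a colimit over the one-object category $BG$ and compute that colimit as homotopy orbits. Since $G$ is a group-like $A_\infty$-space, I will model it as an $\mathbb{E}_1$-group with $BG$ its classifying space, so that $G \simeq \Omega BG$. I also assume $BG$ is connected, which is automatic for a bar construction. Then $BG$ is equivalent, as an $\infty$-groupoid, to the $\infty$-category with a single object $*$ whose endomorphism space is $G$.

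Because $f$ factors through $\operatorname{BGL}_1(R)$, the composite $BG \to \operatorname{Pic}(R) \to \operatorname{Mod}_R$ sends the basepoint to an invertible module in the identity component of $\operatorname{Pic}(R)$. That module is equivalent to $R$ itself. On loops, the functor is the map of $\mathbb{E}_1$-groups
\[\Omega f: G \simeq \Omega BG \to \Omega \operatorname{BGL}_1(R) \simeq \operatorname{GL}_1(R) \simeq \operatorname{Aut}_R(R).\]
The last identification is $\operatorname{Aut}_R(R) \simeq \Omega^\infty R$ restricted to the units, which is standard. So the functor $BG \to \operatorname{Mod}_R$ amounts to $R$ equipped with a $G$-action through $\Omega f$, acting by $R$-module automorphisms. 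This is the action named in the statement.

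The general fact I would invoke is that for any connected space $BG$ and any functor $F: BG \to \mathcal{C}$ into a cocomplete $\infty$-category, $\operatorname{colim}_{BG} F \simeq F(*)_{hG}$. This holds essentially by definition of homotopy orbits as the colimit over $BG$. Applying it to the functor above gives $Mf = \operatorname{colim}(BG \to \operatorname{Mod}_R) \simeq R_{hG}$, where the orbits are computed in $\operatorname{Mod}_R$. Since the forgetful functor $\operatorname{Mod}_R \to \operatorname{Sp}$ preserves colimits, the same equivalence holds on underlying spectra.

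The main obstacle, mild as it is, is the straightening step. One must check that the functor $BG \to \operatorname{Mod}_R$ really corresponds to the $G$-action given by $\Omega f$, that is, that looping the classifying map recovers the action on the fiber over the basepoint. I would handle this with the equivalence between pointed connected spaces over $\operatorname{BAut}(R)$ and $\mathbb{E}_1$-group maps into $\operatorname{Aut}(R)$, given by taking $\Omega$ and its inverse $B$. Combined with $\operatorname{BGL}_1(R) \simeq \operatorname{BAut}_R(R) \subset \operatorname{Mod}_R^{\simeq}$, this matches the two descriptions, following Ando et al.
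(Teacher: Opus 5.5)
Your argument is correct: viewing $BG$ as the one-object $\infty$-groupoid with automorphism space $G$, identifying the restricted functor $BG \to \operatorname{BGL}_1(R) \simeq B\operatorname{Aut}_R(R) \subset \operatorname{Mod}_R$ with $R$ carrying the action classified by $\Omega f$, and then using that $\operatorname{colim}_{BG}$ is by definition $(-)_{hG}$ (with the forgetful functor $\operatorname{Mod}_R \to \operatorname{Sp}$ preserving colimits) proves the statement. The paper gives no proof of its own and simply cites Theorem 1.17 of \cite{Ando_2013}; your argument is the standard $\infty$-categorical reasoning behind that result.
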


Finally, we also record a proposition on the action of $\eta$ on $\mathfrak{gl}_1(R)$ and $R$.
\begin{proposition}[Proposition 4.1 of \cite{mathew2015torusactionsstablemodule}]\label{prop::eta_formula}
    Let $R$ be an $\mathbb{E}_{\infty}$-ring, $\eta$ be the stable Hopf map, and $\psi_i: \pi_i(R) \to \pi_i(\mathfrak{gl}_1(R))$ be the canonical isomorphism for $i \geq 1$. For all $x \in \pi_1(R)$, we have $\eta \cdot \psi_1(x) = \psi_{2}(\eta \cdot x + x^2)$.
\end{proposition}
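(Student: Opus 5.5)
The plan is to treat the discrepancy between $\eta$ acting on $\mathfrak{gl}_1(R)$ and $\eta$ acting on $R$ as a natural operation, and to pin it down on a universal example. For $x \in \pi_1(R)$, define
\[\theta_R(x) \coloneqq \psi_2^{-1}\bigl(\eta \cdot \psi_1(x)\bigr) - \eta \cdot x \in \pi_2(R).\]
This operation is natural in $\mathbb{E}_\infty$-ring maps, because $\mathfrak{gl}_1$ is functorial and $\psi_1, \psi_2$ are natural. It is also additive in $x$. To see this, note that $\psi_1$ and $\psi_2$ are induced by the space-level map $\operatorname{GL}_1(R) \to \Omega^\infty R$, $u \mapsto u - 1$, on positive homotopy groups, which are groups; hence they are homomorphisms, and multiplication by $\eta$ is additive on both sides. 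The statement is then equivalent to $\theta_R(x) = x^2$. Note that $x^2$ is itself additive on $\pi_1$: for odd-degree classes $xy = -yx$, so $(x+y)^2 = x^2 + y^2$, and $2x^2 = 0$.

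The first step is to reduce to the free $\mathbb{E}_\infty$-ring $\mathbb{S}\{x\} = \bigoplus_n ((\mathbb{S}^1)^{\otimes n})_{h\Sigma_n}$ on a class in degree $1$. Naturality shows that $\theta$ is determined by its value on the universal class $x$. In degree $2$, only two summands contribute:
\begin{itemize}
\item the $n=1$ summand, which contributes $\pi_2(\mathbb{S}^1) = \Zbb/2\langle \eta x\rangle$;
\item the $n=2$ summand $(\mathbb{S}^2)_{h\Sigma_2}$ with the sign twist, which contributes $\pi_2 = \Zbb/2\langle x^2\rangle$.
\end{itemize}
Summands with $n \geq 3$ are $2$-connected. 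Therefore
\[\theta(x) = a\,\eta x + b\,x^2 \quad \text{for some } a, b \in \Zbb/2,\]
and it suffices to show $a = 0$ and $b = 1$.

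To determine the coefficients, I would evaluate $\theta$ on two test rings that separate the two terms.
\begin{itemize}
\item For $a$, take a ring in which $x^2 = 0$ but $\eta x \neq 0$, for example $R = \Sigma^\infty_+ S^1 = \mathbb{S}[\Zbb]$ with $x$ the fundamental class. There $x^2 = 0$, since it is pulled back along the diagonal of $S^1$. The class $\psi_1(x)$ comes from the infinite loop map $\Sigma^\infty S^1 \to \mathfrak{gl}_1$ induced by the strict units $\Zbb \to \operatorname{GL}_1$. Using that spectrum-level map, $\eta\psi_1(x)$ should agree with $\psi_2(\eta x)$, which forces $a = 0$.
\item For $b$, I would use $R = \mathbb{S}$ with $x = \eta$, or work directly in $\mathbb{S}\{x\}$, where $x^2 \neq 0$.
\end{itemize}

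The main obstacle is computing $b$, that is, showing that the $\mathbb{E}_\infty$ (or even $\mathbb{E}_2$) structures on $\tau_{[1,2]}\mathfrak{gl}_1(R)$ and $\tau_{[1,2]}R$ differ exactly by the squaring operation. I would first try the following. The composite $\Sigma^2 \mathbb{S} \xrightarrow{\eta} \Sigma \mathbb{S} \to \mathfrak{gl}_1 R$ is adjoint to a loop-space map $\Omega^\infty$, and $\eta$ is represented unstably by the Hopf construction on the multiplication of $S^1$. Applying this Hopf construction to $1 + x$ inside the multiplicative monoid $\operatorname{GL}_1(R)$ yields the Hopf construction on the additive structure (contributing $\eta x$) plus the cross term $x \cdot x$ from expanding $(1+x)(1+x')$, which contributes $x^2$. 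Making this precise amounts to identifying the cup-$1$/Dyer--Lashof term $Q_1$ on degree-$1$ classes in the free algebra. This is the step I expect to require the most care.
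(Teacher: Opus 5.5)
The paper gives no proof of this statement; it quotes it from Mathew. Judged on its own, your outline has a reasonable shape: a natural, additive defect $\theta$, reduction to the free $\mathbb{E}_\infty$-ring on a degree-one class, and detection by test rings. The step that pins down the coefficients, however, does not work.

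First a small slip: $\pi_2(\mathbb{S}\{x\})$ also contains $\eta^2$ from the $n=0$ summand. Its coefficient vanishes by naturality along the augmentation $x\mapsto 0$, since $\theta(0)=0$.

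More seriously, both claims you make about $\Sigma^\infty_+S^1$ are false.
\begin{enumerate}
\item The product on $\pi_*\Sigma^\infty_+S^1$ is the Pontryagin product induced by the group law $S^1\times S^1\to S^1$; no diagonal is involved. The stable Hopf construction on that multiplication is $\eta$, so $x^2=\eta x\neq 0$. This is the classical relation $\sigma^2=\eta\sigma$.
\item The grouplike $\mathbb{E}_\infty$-space $S^1=K(\Zbb,1)$ deloops to $\Sigma H\Zbb$, not to $\Sigma^\infty S^1$. So $\psi_1(x)$ lies in the image of a spectrum map $\Sigma H\Zbb\to\mathfrak{gl}_1(\Sigma^\infty_+S^1)$, and therefore $\eta\psi_1(x)=0$, not $\psi_2(\eta x)$.
\end{enumerate}
The two errors cancel. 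Computed correctly, this ring gives $\theta(x)=\eta x=x^2$, i.e.\ only the relation $a+b=1$. Indeed $x^2=\eta x$ for every $x\in\pi_1(\Sigma^\infty_+S^1)$, so this ring can never separate the two coefficients.

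Your candidate for $b$ has the same defect. For $R=\mathbb{S}$ and $x=\eta$ one again has $x^2=\eta x$. You would also need $\eta\psi_1(\eta)$ in $\mathfrak{gl}_1\mathbb{S}$ as independent input, but that is exactly what the paper derives from this proposition in Lemma~\ref{lem::a2_zero}. ``Work directly in $\mathbb{S}\{x\}$'' just restates the problem. The Hopf-construction heuristic is the right intuition, but it is the entire content of the statement and you do not carry it out.

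What is missing is a detector that sees $x^2$ but not $\eta x$. One option:
\begin{enumerate}
\item Since $S^1$ is connected, $\mathbb{S}\{x\}=\operatorname{Sym}(\Sigma^\infty S^1)\simeq\Sigma^\infty_+QS^1$. Hence $\psi_1(x)$ is the image of the generator under the spectrum map $\Sigma^\infty S^1\to\mathfrak{gl}_1(\Sigma^\infty_+QS^1)$ adjoint to the $\mathbb{E}_\infty$-map $QS^1\to\operatorname{GL}_1$.
\item Therefore $\eta\psi_1(x)=\psi_2(h(\eta))$, where $h\colon\pi_2(QS^1)\to\pi_2(\Sigma^\infty_+QS^1)$ is induced by $y\mapsto[y]-[0]$.
\item The mod $2$ Hurewicz map kills $\eta^2$ and $\eta x$, and sends $x^2$ to $x_1*x_1\neq 0$.
\item It sends $h(\eta)$ to the unstable Hurewicz image of $\eta$ in $H_2(QS^1;\Fbb_2)$. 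This is $x_1*x_1$, because the adjoint of the Hopf map hits $x_1^2\in H_2(\Omega S^2;\Fbb_2)$.
\end{enumerate}
This gives $b=1$, and the corrected computation in $\Sigma^\infty_+S^1$ then gives $a=0$.
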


\subsection{(Co)homological Suspension}

\begin{definition}
    Let $W$ be a based space. The counit of the suspension-loop adjunction yields a map $\Sigma \Omega W \to W$. This induces a \textit{cohomological suspension map}
    \[\sigma^*: \widetilde{H}^{i+1}(W; A) \to \widetilde{H}^i(\Omega W; A),\quad i \geq 0.\]
    If $W = BW'$, there is a \textit{homological suspension map $\sigma_*$}
    \[\sigma_*: \widetilde{H}_{q}(W') \to \widetilde{H}_{q+1}(BW').\]
\end{definition}

The following is a well-known fact in algebraic topology.
\begin{lemma}[See for Example, Chapter VII of \cite{Whitehead1978}] \label{lem::coh_map_loop}
Let $\alpha: W \to K(A, n)$ denote a map representing $\alpha \in \Tilde{H}^n(W, A)$. Then $\sigma^*(\alpha)$ represents the map
\[\Omega \alpha: \Omega W \to \Omega K(A, n) \simeq K(A, n-1).\]
\end{lemma}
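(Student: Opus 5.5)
The plan is to reduce the statement to the naturality of the loop--suspension adjunction together with the representability of cohomology by Eilenberg--MacLane spaces. Represent $\alpha \in \widetilde{H}^n(W;A)$ by a based map $\alpha: W \to K(A,n)$, so that $\alpha = \alpha^*(\iota_n)$ for the fundamental class $\iota_n \in \widetilde{H}^n(K(A,n);A)$. The cohomological suspension is defined by pulling back along the counit $\varepsilon_W: \Sigma\Omega W \to W$ and then applying the suspension isomorphism $\widetilde{H}^{n}(\Sigma \Omega W;A)\cong \widetilde{H}^{n-1}(\Omega W;A)$. Since the counit is a natural transformation, $\alpha\circ \varepsilon_W = \varepsilon_{K(A,n)}\circ \Sigma\Omega\alpha$. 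This gives
\[
\sigma^*(\alpha) = \sigma^*(\alpha^*\iota_n) = (\Omega\alpha)^*\,\sigma^*(\iota_n).
\]
In words, cohomological suspension commutes with pullback along loop maps.

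It then remains to identify $\sigma^*(\iota_n)\in \widetilde{H}^{n-1}(\Omega K(A,n);A)$ with the fundamental class of $\Omega K(A,n)\simeq K(A,n-1)$. Here I would fix the equivalence $\Omega K(A,n)\simeq K(A,n-1)$ to be the adjoint of the map $\Sigma K(A,n-1)\to K(A,n)$ representing the suspension $\Sigma\iota_{n-1}$. With that normalization, $\sigma^*(\iota_n)$ corresponds under the adjunction to $\iota_{n-1}$ tautologically. Concretely, the composite $\Sigma K(A,n-1) \to \Sigma\Omega K(A,n) \to K(A,n)$ is that adjoint map, and pulling $\iota_n$ back along it gives $\Sigma\iota_{n-1}$. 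Combined with the previous paragraph, $\sigma^*(\alpha)$ is represented by $\Omega\alpha$ followed by this equivalence, which is the claim.

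The only real subtlety is this normalization and the sign conventions in the suspension isomorphism. Any other choice of identification $\Omega K(A,n)\simeq K(A,n-1)$ differs by an automorphism of $A$, for example $-1$. I would address this by choosing the identification so that the statement holds on the nose for $\iota_n$, and noting that the Lemma is only meaningful relative to such a choice. As a sanity check, the identification is an isomorphism on $\pi_{n-1}$ by the adjunction $\pi_{n-1}\Omega K(A,n)\cong \pi_n K(A,n)=A$, so $\sigma^*(\iota_n)$ is indeed a fundamental class via Hurewicz. Everything else is formal.
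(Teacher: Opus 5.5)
Your proof is correct and is the standard argument: naturality of the counit $\Sigma\Omega W\to W$, together with naturality of the suspension isomorphism, reduces the claim to the universal case $\sigma^*(\iota_n)$, and your normalization of $e\colon K(A,n-1)\xrightarrow{\simeq}\Omega K(A,n)$ settles that case. The paper gives no proof of its own beyond the citation to Whitehead, Ch.~VII, and this is the argument that citation stands for.

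The one step worth writing out is why $e$ is an isomorphism on $\pi_{n-1}$. Under the adjunction, $e_*[\phi]=[\tilde e\circ\Sigma\phi]$, which on cohomology classes is the suspension isomorphism $\widetilde{H}^{n-1}(S^{n-1};A)\cong\widetilde{H}^{n}(S^{n};A)$. Your caveat about normalization is harmless here, since the paper only applies the lemma with $A=\Fbb_2$, whose automorphism group is trivial.
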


\subsection{Dyer-Lashof Operations and Transfer Maps}

Here we briefly write down the Dyer-Lashof operations and all of the properties we will need in the proof. We refer the reader to \cite{lawson2020enringspectradyerlashof} for more details.

\begin{theorem}[Theorem 5.2, Remark 5.3, and Proposition 5.13 of \cite{lawson2020enringspectradyerlashof}]\label{thm::DL_operations}
Let $H = H\Fbb_2$. An $\mathbb{E}_n$-algebra $A_H$ in $\operatorname{Mod}_{H}$ has maps $Q^s: \pi_{m}(A_H) \to \pi_{m+s}(A_H)$ (called \textit{Dyer-Lashof operations}) for $m \leq s \leq m + n - 1$, such that
\[Q^{|x|}(x) = x^2,\]
and each $Q^s$ commutes with homology suspensions in $\Fbb_2$-homology of iterated loop spaces. Furthermore, if an $\mathbb{E}_n$-algebra structure on $A_{H} \in \operatorname{Mod}_{H}$ extends to an $\mathbb{E}_{n+1}$-algebra structure, then the operations $Q^i$ coincide with the previous ones.
\end{theorem}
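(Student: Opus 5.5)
The plan is to build the operations from the equivariant geometry of the $\mathbb{E}_n$-operad and read off each property from a cell structure. Let $\mathcal{C}_n(2)\simeq S^{n-1}$ be the space of binary operations of the little $n$-disks operad, with $\Sigma_2$ acting antipodally. An $\mathbb{E}_n$-algebra $A_H$ in $\operatorname{Mod}_H$ has a structure map
\[\theta\colon \mathcal{C}_n(2)_+\otimes_{h\Sigma_2} A_H^{\otimes_H 2}\to A_H .\]
Represent a class $x\in\pi_m(A_H)$ by an $H$-module map $x\colon\Sigma^m H\to A_H$. Applying the extended square $D_2^{(n)}$ and composing with $\theta$ gives
\[\Theta_x\colon D_2^{(n)}(\Sigma^m H)=\mathcal{C}_n(2)_+\otimes_{h\Sigma_2}(\Sigma^m H)^{\otimes_H 2}\longrightarrow A_H .\]

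First I identify the source. We have $(\Sigma^m H)^{\otimes_H 2}\simeq \Sigma^{2m}H$, with $\Sigma_2$ acting through the sign representation on $m$ coordinates. So the source is $H\otimes \Sigma^{m}(\mathbb{RP}^{n-1+m}_{m})$, the Thom spectrum of $m\lambda$ over $\mathbb{RP}^{n-1}$. Its mod $2$ homotopy has exactly one class $e_i$ in degree $2m+i$ for each $0\le i\le n-1$. I then define
\[Q^{m+i}(x)\coloneqq(\Theta_x)_*(e_i)\in\pi_{2m+i}(A_H),\]
so $Q^s$ is defined exactly for $m\le s\le m+n-1$, as claimed. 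Well-definedness in $x$ is automatic, since $D_2^{(n)}$ is a functor on the homotopy category of $H$-modules.

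For $Q^{|x|}(x)=x^2$, the bottom class $e_0$ comes from the inclusion of a single point of $\mathcal{C}_n(2)$, i.e. from $(\Sigma^mH)^{\otimes 2}\to D_2^{(n)}(\Sigma^m H)$. Composing with $\theta$ gives the multiplication at that point applied to $x\otimes x$, which is $x^2$.

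For the $\mathbb{E}_{n+1}$-compatibility, the restriction of an $\mathbb{E}_{n+1}$-structure to $\mathbb{E}_n$ is induced by the equivariant equatorial inclusion $S^{n-1}\subset S^n$. On Thom spectra this gives the standard inclusion $\mathbb{RP}^{n-1+m}_m\subset\mathbb{RP}^{n+m}_m$, which carries $e_i$ to $e_i$ for $i\le n-1$. Hence the operations $Q^s$ agree on their common range.

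The remaining property, compatibility with homology suspension, I expect to be the main obstacle. The setting is $A_H=H\otimes\Sigma^\infty_+\Omega^n Y$, with $\sigma_*\colon H_q(\Omega^{n}Y)\to H_{q+1}(\Omega^{n-1}Y)$. Here $\Omega^{n}Y=\Omega(\Omega^{n-1}Y)$ carries an $\mathbb{E}_n$-structure, while $\Omega^{n-1}Y$ carries only an $\mathbb{E}_{n-1}$-structure. My approach has three steps:
\begin{itemize}
\item Model $\sigma_*$ by the evaluation $\Sigma\Omega W\to W$ with $W=\Omega^{n-1}Y$.
\item Use the standard natural map $\Sigma D_2^{(n-1)}(\Sigma^{-1}X)\to D_2^{(n)}(X)$, coming from suspending $\mathcal{C}_{n-1}(2)\subset\mathcal{C}_n(2)$ along the extra loop coordinate. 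On Thom spectra it is the suspension $\Sigma\mathbb{RP}^{n-2+m}_{m}\to\mathbb{RP}^{n-1+m+1}_{m+1}$, which matches the cells $e_i$ up to the index shift, so that $\sigma_*Q^s=Q^s\sigma_*$.
\item Check that this map intertwines the two structure maps $\theta$, using that the loop structure on $\Omega W$ is the first little-cube coordinate.
\end{itemize}
The delicate point is the cell bookkeeping in the second step: one must check that it really sends $e_i$ to $e_i$ rather than to a neighbouring class. Mod $2$ there are no sign issues, but the indexing needs care. I would verify it by comparing with the classical computation for $\Omega^nS^{n+k}$.
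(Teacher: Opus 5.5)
The paper gives no proof of this statement; it is quoted from Lawson's survey. Your construction of $Q^s$ through $D_2^{(n)}(\Sigma^m H)\simeq H\otimes\Sigma^m\Rbb P^{m+n-1}_m$ is the standard one. Your arguments for the range of $s$, for $Q^{|x|}(x)=x^2$ via the bottom cell, and for $\mathbb{E}_{n+1}$-compatibility via the equatorial inclusion $S^{n-1}\subset S^n$ are fine.

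The gap is the homology-suspension property. This is exactly the part the paper relies on: in Lemma~\ref{lem::u2_not_zero}, with $n=2$, it is used to get $u^2=Q^2(\sigma_*x)=\sigma_*Q^2(x)$.

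\textbf{The comparison map points the wrong way.} The class $x\in H_m(\Omega^nY;\Fbb_2)$ lives in the $\mathbb{E}_n$-algebra, while $\sigma_*x\in H_{m+1}(\Omega^{n-1}Y;\Fbb_2)$ lives in the $\mathbb{E}_{n-1}$-algebra. So you must compare $\Sigma D_2^{(n)}(\Sigma^mH)$ with $D_2^{(n-1)}(\Sigma^{m+1}H)$.
\begin{itemize}
\item Your map $\Sigma D_2^{(n-1)}(\Sigma^{m}H)\to D_2^{(n)}(\Sigma^{m+1}H)$ lands in an object with no structure map to $H\otimes\Sigma^\infty_+\Omega^{n-1}Y$, since $\Omega^{n-1}Y$ is not $\mathbb{E}_n$.
\item The cell matching you describe also cannot hold. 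For $n\ge 3$, no map $\Sigma\Rbb P^{m+n-2}_m\to\Rbb P^{m+n}_{m+1}$ is a mod $2$ homology isomorphism in degrees $m+1,\dots,m+n-1$. Indeed, $\operatorname{Sq}^1$ acts on the bottom cohomology class by $m$ in the source and by $m+1$ in the target.
\end{itemize}
The correct map is not obtained by suspending the inclusion $\mathcal{C}_{n-1}(2)\subset\mathcal{C}_n(2)$. It comes from the $\Sigma_2$-equivariant collapse $S(n\sigma)_+\to S(n\sigma)/S(\sigma)\cong S((n-1)\sigma)_+\wedge S^{\sigma}$, which pinches the two ``loop-direction'' poles. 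On Thom spectra this is the bottom-cell collapse $\Sigma^{m+1}\Rbb P^{m+n-1}_m\to\Sigma^{m+1}\Rbb P^{m+n-1}_{m+1}$. It kills $e_0$, consistent with $\sigma_*(x^2)=0$, and sends $e_i\mapsto e'_{i-1}$ for $1\le i\le n-1$.

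\textbf{The intertwining step is unargued.} Your third bullet is the entire content of the suspension property, and it is only asserted. $\Sigma\Omega W$ carries no $\mathbb{E}_{n-1}$-structure, so nothing formal makes the evaluation compatible with the two structure maps $\theta$. You must show two things, $\Sigma_2$-equivariantly and coherently over $S^{n-1}$:
\begin{itemize}
\item After suspension and evaluation, the binary operation over the poles (loop concatenation) becomes null. This uses $\mathrm{ev}\circ\Sigma\mu\simeq\mathrm{ev}\circ\Sigma p_1+\mathrm{ev}\circ\Sigma p_2$, which kills the smash factor.
\item The map this induces on the cofiber is the $\mathbb{E}_{n-1}$-operation of $W$ over the equator.
\end{itemize}
Two standard routes do this: Dunn additivity together with compatibility of the suspension $\Sigma\bar A\to\overline{\operatorname{Bar}(A)}$ with extended powers; or, classically, May--Cohen operations on the pair $(PW,\Omega W)$ commuting with the boundary map.

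Checking the formula on $\Omega^nS^{n+k}$ does not replace this. A general $x\in H_m(\Omega^nY;\Fbb_2)$ is not the image of a class under an $n$-fold loop map out of such a space, so there is no universal-example reduction at the level of spaces.
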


We will also briefly introduce additive and multiplicative transfer maps for spectra. We refer the reader to Section 7 of \cite{15fd3722-af20-3529-b950-b5d19ca1d240} and Chapter VIII of \cite{h_infty_book} for more details.

\begin{definition} \label{def::transfer}
We let $\operatorname{tr}: \Sigma^{\infty}_+ BC_2 \to \Sigma^{\infty}_+ EC_2 \simeq \Sbb$ denote the stable transfer map with respect to the universal covering space $EC_2 \to BC_2$.
\begin{enumerate}
    \item For any spectrum $A$, precomposition gives a map, called the \textit{(additive) transfer map}
\[p^{A}_{!}: [\Sbb, A] \cong \pi_0(A) \to [\Sigma^{\infty}_+ BC_2, A] \cong A^0(BC_2).\]
This is clearly natural in $A$.
    \item If $A = R$ is an $\mathbb{E}_{\infty}$-ring, then there is also a \textit{multiplicative transfer map}
\[p^{R}_{\otimes}: \pi_0(R) \to R^0(BC_2).\]
\end{enumerate}
\end{definition}

Here we work through a specific example computing additive transfer maps that will be helpful for us later.

Consider the additive transfer map when $A = \Sbb$,
\[p_{!}^{\mathbb{S}}: \pi_0(\mathbb{S}) \cong \Zbb \to \Sbb^0(BC_2) \cong \widehat{A}(C_2) \cong \Zbb \{1\} \oplus \Zbb_2\{y\}. \]
Here, $\widehat{A}(C_2)$ is the completion of the Burnside ring $A(C_2)$ at the augmentation ideal. The second-to-last isomorphism is due to the Segal conjecture (the case for $C_2$ is proven in \cite{Lin_1980}, and the general case is proven in \cite{8ce2daad-bfb3-3964-bf77-9771ee1483d8}), which in particular tells us the natural map $A(C_2) \to \Sbb^0(BC_2)$ is isomorphic to the completion at the augmentation ideal.

\begin{lemma}
   Under the generators described above, $p_{!}^{\mathbb{S}}(1) = 2 + y$.
\end{lemma}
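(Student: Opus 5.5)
We interpret $p^{\Sbb}_!(1)$ through equivariant stable homotopy theory, where it becomes a computation in the Burnside ring. The generators are understood as follows: $1$ is the class of the trivial $C_2$-set $C_2/C_2$, and $y$ is the image of $[C_2/e]-2$, which generates the augmentation ideal $I \subset A(C_2)$. Under the Segal conjecture isomorphism $A(C_2)^\wedge_I \cong \Sbb^0(BC_2)$, this ideal completes to $\Zbb_2\{y\}$. The map $A(C_2)\to \Sbb^0(BC_2)$ sends a finite $C_2$-set $S$ to the stable class obtained from the finite covering $EC_2\times_{C_2} S \to BC_2$. Concretely, this class is the composite of the transfer $\Sigma^\infty_+ BC_2 \to \Sigma^\infty_+(EC_2\times_{C_2}S)$ with the collapse map to $\Sbb$.

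The key step is to identify $\operatorname{tr}$, composed with $\Sigma^\infty_+EC_2\simeq \Sbb$, with the image of the free orbit $C_2/e$. Take $S = C_2/e$: then $EC_2\times_{C_2} C_2 \cong EC_2$, and the covering is exactly the universal cover $EC_2\to BC_2$. So the class attached to $[C_2/e]$ is the composite $\Sigma^\infty_+BC_2 \xrightarrow{\operatorname{tr}} \Sigma^\infty_+EC_2 \to \Sbb$, which by Definition~\ref{def::transfer} is precisely $p^{\Sbb}_!(1)$. I would justify this identification by citing the standard construction of the map in the Segal conjecture (Segal's, or the tom Dieck splitting description: $\pi_0^{C_2}(\Sbb)\to \pi_0 F(EC_{2+},\Sbb)^{C_2}\cong \Sbb^0(BC_2)$ sends the transfer class $[C_2/e]$ to the Becker--Gottlieb/Kahn--Priddy transfer of the cover). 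Similarly, $[C_2/C_2]$ maps to the unit $1$, since it corresponds to the identity covering.

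It then follows that $p^{\Sbb}_!(1)$ is the image of $[C_2/e] = 2 + ([C_2/e]-2) = 2+y$. As a sanity check on the $\Zbb\{1\}$-coefficient, precompose with the inclusion of the basepoint $\Sbb\to\Sigma^\infty_+BC_2$. This restricts the transfer to the fiber of the double cover, which gives the degree of the cover, $2$, while $y$ restricts to $0$ because it lies in the augmentation ideal. This confirms the augmentation coefficient $2$.

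The main obstacle is making precise, and properly citing, the compatibility between the Burnside ring element $[C_2/e]$ and the geometric transfer $\operatorname{tr}$ under the Segal map. This is the only non-formal input. Once it is in place, the lemma reduces to the algebraic identity $[C_2/e] = 2+y$, which holds by the definition of $y$.
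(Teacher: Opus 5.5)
Your proposal is correct and is essentially the paper's argument: $p^{\Sbb}_!(1)$ is by definition the transfer class $\operatorname{tr} \in \Sbb^0(BC_2)$, the Segal map $A(C_2) \to \Sbb^0(BC_2)$ sends $[C_2/e]$ to $\operatorname{tr}$, and $[C_2/e] = 2 + ([C_2/e]-2) = 2 + y$ with $y$ the generator of the augmentation ideal. Your check that restricting to the basepoint recovers the coefficient $2$ is a harmless extra that the paper does not include.
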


\begin{proof}
    The proof is somewhat tautological. From construction, the element $p_{!}^{\mathbb{S}}(1)$ represents the transfer map $\operatorname{tr}: \Sigma^{\infty}_+ BC_2 \to \mathbb{S} \in \mathbb{S}^0(BC_2)$. The canonical map $A(C_2) \to \Sbb^0(BC_2)$ sends the element $[C_2/e] \in A(C_2)$ to the transfer class $\operatorname{tr}$. The augmentation ideal of $A(C_2)$ is generated by $\operatorname{tr} - 2$, which we wrote as $y$.
\end{proof}

The inclusion of subcomplex $i: \mathbb{R}P^2 \hookrightarrow BC_2$ induces a class $i^*(2+y) \in \Sbb^0(\Rbb P^2)$. The unit map $\varphi: \Sbb \to \operatorname{KO}$ gives a ring map $\mathbb{S}^0(\mathbb{R}P^2) \to \operatorname{KO}^0(\mathbb{R}P^2)$ and determines a class
\[\varphi_*(i^*(2+y)) \in \operatorname{KO}^{0}(\Rbb P^2).\]

\begin{lemma}\label{lem::y_calc}
    Let $L$ be the tautological line bundle over $\Rbb P^2$. We have that $\varphi_*(i^*(2+y)) = [L] + 1 \in \operatorname{KO}^0(\Rbb P^2)$. and it follows that $\varphi_*(i^*(y)) = [L] - 1$.
\end{lemma}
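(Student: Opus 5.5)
The plan is to identify the image of the transfer class under the unit map $\varphi$ by naturality. I will not compute inside $\Sbb^0(BC_2)$ directly. The transfer $\operatorname{tr}\colon \Sigma^{\infty}_+ BC_2 \to \Sbb$ is the Becker--Gottlieb (or Kahn--Priddy) transfer of the double cover $p\colon EC_2 \to BC_2$. Composing it with $\varphi\colon \Sbb \to \operatorname{KO}$ gives $\varphi_*(p^{\Sbb}_!(1)) = p^{\operatorname{KO}}_!(1) \in \operatorname{KO}^0(BC_2)$, by naturality of $p_!$ in the target spectrum (Definition~\ref{def::transfer}). Restricting along $i\colon \Rbb P^2 \hookrightarrow BC_2$ commutes with $\varphi_*$, since $\varphi_*$ is postcomposition and $i^*$ is precomposition. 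It therefore suffices to compute $i^* p^{\operatorname{KO}}_!(1)$, which is the $\operatorname{KO}$-theory transfer of the unit class along the restricted cover $S^2 \to \Rbb P^2$.

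The key step is the standard fact that, for a finite covering $\pi\colon \tilde{B} \to B$, the transfer in $\operatorname{KO}$ sends a vector bundle $E$ on $\tilde{B}$ to its direct image $\pi_*E$. This is the bundle whose fiber over $b$ is $\bigoplus_{x\in \pi^{-1}(b)} E_x$. I would justify it by the compatibility of the Becker--Gottlieb transfer with the Atiyah transfer (pushforward of bundles) in $K$-theory. Equivalently, one can argue through the Burnside-ring picture already used in the previous lemma. The class $[C_2/e] \in A(C_2)$ maps to $\operatorname{tr}$, and the map $A(C_2)\to \operatorname{KO}^0(BC_2)$ factors as $A(C_2) \to RO(C_2) \to \operatorname{KO}^0(BC_2)$. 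The first map sends a finite $C_2$-set to its permutation representation, and the second is the Atiyah--Segal map sending $V$ to the bundle $EC_2\times_{C_2} V$. This factorization is the natural compatibility of the tom Dieck/Segal map with the unit $\Sbb\to\operatorname{KO}$. I expect justifying it carefully, with a citation such as Atiyah--Segal or tom Dieck, to be the main obstacle.

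Granting this, the rest is a computation. The permutation representation $\Rbb[C_2]$ splits as $1 \oplus \sigma$, where $\sigma$ is the sign representation. The associated bundle of $\sigma$ over $BC_2$ is the tautological line bundle, and restricting to $\Rbb P^2$ gives $[L]$. Hence $\varphi_*(i^*(2+y)) = 1 + [L]$. Since $\varphi_*$ and $i^*$ are ring maps, they preserve the unit, so $\varphi_*(i^*(2)) = 2$. Subtracting gives $\varphi_*(i^*(y)) = [L]-1$.
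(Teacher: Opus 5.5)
Your proposal is correct and follows essentially the same route as the paper: you reduce by naturality to the $\operatorname{KO}$-transfer of the unit class along $S^2 \to \Rbb P^2$, identify that transfer with Atiyah's pushforward of bundles, and compute $\pi_*(\epsilon^1) \cong \epsilon^1 \oplus L$. Your splitting $\Rbb[C_2] \cong 1 \oplus \sigma$ is the representation-theoretic form of the paper's explicit fiberwise isomorphism $(a,b) \mapsto (a+b, \tfrac{a-b}{2}v)$.
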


\begin{proof}
    By naturality, the class $\varphi_*(i^*(2+y))$ is actually the image of $1 \in \operatorname{KO}^0(S^2)$ (the trivial line bundle, which we will write $\epsilon^1$) under the additive transfer map $\operatorname{KO}^{0}(S^2) \to \operatorname{KO}^{0}(\mathbb{R}P^2)$.
    
    This transfer has a more classical geometric description by Atiyah \cite{Atiyah1961}, see also Section 7 of \cite{BECKER19751}. In general, for a finite-sheeted covering space $\pi: B' \to B$ and a real vector bundle $V$ over $B'$, one can form a real vector bundle $\pi_*(V)$ on $B$ whose fiber at $x \in B$ is $\bigoplus_{y \in \pi^{-1}(x)} V_{y}$. The resulting vector bundle on $B$ determines the map $\operatorname{KO}^0(B') \to \operatorname{KO}^0(B)$.

    In this specific case, we wish to show $\pi_*(\epsilon^1)$ is isomorphic to $\epsilon^1 \oplus L$. Indeed, for each equivalence class $[v] \in \Rbb P^2$, the fiberwise isomorphism $\pi_*(\epsilon^1)_{[v]} \to (\epsilon^1 \oplus L)_{[v]}$ is given by the well-defined map $(a, b) \mapsto (a + b, \frac{(a-b)}{2} v)$.
\end{proof}

As a corollary, we have that:
\begin{corollary}\label{cor::10y_not_zero}
  The class $\varphi_*(i^*(10 y))$ is not $0$ in $\operatorname{KO}^0(\Rbb P^2)$.
\end{corollary}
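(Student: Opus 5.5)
By Lemma~\ref{lem::y_calc} we have $\varphi_*(i^*(y)) = [L]-1$. Since $\varphi_*$ and $i^*$ are additive, it follows that $\varphi_*(i^*(10y)) = 10([L]-1)$ in $\operatorname{KO}^0(\Rbb P^2)$. The claim therefore reduces to showing that the reduced class $10([L]-1)\in \widetilde{\operatorname{KO}}^0(\Rbb P^2)$ is nonzero. Two approaches are available.

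The first approach cites Adams' computation: $\widetilde{\operatorname{KO}}^0(\Rbb P^n)$ is cyclic of order $2^{\phi(n)}$, generated by $[L]-1$. Here $\phi(n)$ counts the integers $0<j\le n$ with $j\equiv 0,1,2,4 \pmod 8$, so $\phi(2)=2$. Hence $[L]-1$ has order $4$, and $10([L]-1)=2([L]-1)\neq 0$.

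I would rather give a self-contained argument using Stiefel--Whitney classes, which avoids the only delicate input, namely the exact order. The total Stiefel--Whitney class
\[w=1+w_1+w_2+\cdots\]
is defined on virtual bundles. By the Whitney sum formula it is a group homomorphism from $(\operatorname{KO}^0(B),+)$ to the multiplicative group of units $1+\widetilde H^{*}(B;\Fbb_2)$ in $H^{**}(B;\Fbb_2)$. Trivial bundles have $w=1$. On $\Rbb P^2$ we have $w(L)=1+x$, where $x\in H^1(\Rbb P^2;\Fbb_2)$ is the generator and $x^3=0$. Therefore
\[w\bigl(10([L]-1)\bigr)=(1+x)^{10}=\bigl((1+x)^2\bigr)^5=(1+x^2)^5=1+5x^2=1+x^2,\]
where we use $x^4=0$ and $5\equiv 1 \pmod 2$. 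Since $x^2\neq 0$ in $H^2(\Rbb P^2;\Fbb_2)$, this total class is not $1$. Because $w(0)=1$, the class $10([L]-1)$ is nonzero. This proves the corollary.

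The only step needing care is the assertion that $w$ extends to a homomorphism on virtual classes in $\operatorname{KO}^0$. This is standard: on a compact base every element of $\operatorname{KO}^0$ is a difference of genuine bundles, and the Whitney formula gives multiplicativity, with inverses existing in the completed cohomology ring. Beyond that, the proof is a one-line binomial computation mod $2$.
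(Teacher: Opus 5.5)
Your proof is correct and is essentially the paper's argument: you reduce via Lemma~\ref{lem::y_calc} to showing $10([L]-1)\neq 0$, and you detect this with the total Stiefel--Whitney class $(1+x)^{10}=1+x^2\neq 1$ on $\Rbb P^2$. Your alternative route through Adams' computation $\widetilde{\operatorname{KO}}^0(\Rbb P^2)\cong\Zbb/4$ is also correct, but it is not needed.
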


\begin{proof}
    By Lemma~\ref{lem::y_calc}, it suffices to show that $10([L] - 1) \neq 0$, so in other words that $10[L]$ is not a stably trivial bundle. Indeed, if $10[L]$ is a stably trivial bundle, then the total Stiefel-Whitney class $w(10[L]) = 1$, but from the Whitney product formula and the fact that $w([L]) = (1 + a)$, where $a$ is the generator of the ring $H^*(\Rbb P^2; \Fbb_2)$, we know that
    \[w(10[L]) = (1 + a)^{10} \pmod{2}.\]
    Since $a^n = 0$ for $n \geq 3$, we have that the equation becomes $1 + 10 a + 45 a^2$, which is $1 + a^2$ modulo $2$. Thus the total Stiefel-Whitney class of $10[L]$ is not $1$, so $10[L]$ is not stably trivial.
\end{proof}

\section{Setting Up the Proof}\label{sec::reduction}

The goal of this section is to explain why we can reduce Theorem~\ref{thm::main} to the following statement. The arguments used are standard and use the change-of-rings formula (Theorem~\ref{thm::change-of-rings}). 

\begin{theorem}\label{thm::no_e2_complete}
    Let $X$ be path-connected. There is no $\mathbb{E}_2$-map
    \[f: X \to \operatorname{BGL}_1(\mathbb{S}^{\wedge}_2)\]
    such that $Mf \cong H\Zbb/4$ as $\mathbb{E}_2$-$\mathbb{S}^{\wedge}_2$-algebras.
\end{theorem}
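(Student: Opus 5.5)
The plan is to argue by contradiction: assume such an $\mathbb{E}_2$-map $f$ exists, build a class $\alpha \in H^1(X;\Fbb_2)$, and compute $\alpha^2 \in H^2(X;\Fbb_2)$ in two incompatible ways.

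\textbf{Step 1: homology of $X$ and the class $\alpha$.} Since $X$ is path-connected and $\mathbb{E}_2$, it is grouplike, so $X \simeq \Omega^2 Y$ for a $1$-connected $Y$. Compose $Mf \simeq H\Zbb/4$ with the $\mathbb{E}_\infty$ reduction $H\Zbb/4 \to H\Fbb_2$. Theorem~\ref{thm::thom_iso} then gives an equivalence of $\mathbb{E}_2$-$H\Fbb_2$-algebras
\[H\Fbb_2 \otimes H\Zbb/4 \simeq H\Fbb_2 \otimes \Sigma^\infty_+ X.\]
Hence $H_*(X;\Fbb_2) \cong H_*(H\Zbb/4;\Fbb_2)$ as rings with Dyer--Lashof operations $Q^m$, $Q^{m+1}$ (Theorem~\ref{thm::DL_operations}). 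In low degrees $H_1 = \Fbb_2\{x_1\}$, where $x_1$ is the Bockstein-type class, and $H_2$ is known explicitly.

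Let $\alpha \in H^1(X;\Fbb_2)$ be dual to $x_1$. Because $H^1(X) = H^3(Y)$ after double delooping, Lemma~\ref{lem::coh_map_loop} lets us write $\alpha = \sigma^*\sigma^*\beta$ with $\beta \in H^3(Y;\Fbb_2)$. On $\pi_1$, $f$ sends a generator $\gamma$ of $\pi_1 X$ to a unit $u \in \pi_0(\Sbb^\wedge_2)^\times = \Zbb_2^\times$. Since $\pi_0 Mf = \Zbb_2/(u-1)$ must be $\Zbb/4$, we get $u \equiv 5 \pmod 8$ up to sign, i.e.\ $u - 1 = 4\cdot(\text{unit})$.

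\textbf{Step 2: $\alpha^2 = 0$ from homology.} Evaluate $\alpha^2$ on $H_2(X)$, using the coproduct and the fact that $\alpha$ is primitive. The only thing to detect is whether $x_1^2 = Q^1(x_1)$ is nonzero in $H_2(X)$. I would compute $Q^1 x_1$ in $H_*(H\Zbb/4;\Fbb_2)$ using the $\mathbb{E}_\infty$ structure; by the compatibility statement in Theorem~\ref{thm::DL_operations}, the $\mathbb{E}_2$ operations agree with it. I expect $x_1^2 = 0$ there, unlike $\tau_0^2 = \xi_1$ for $H\Fbb_2$, reflecting that the Bockstein for $\Zbb/4$ lands in $\xi_1^2$ rather than $\xi_1$. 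This forces $\alpha^2 = 0$.

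\textbf{Step 3: $\alpha^2 \neq 0$ from the $\mathbb{E}_2$-map (main obstacle).} The loop $\gamma$ extends, using the $\mathbb{E}_2$ structure, to an $\mathbb{E}_2$-map from the free grouplike $\mathbb{E}_2$-space on $S^1$, namely $\Omega^2 S^3$. Its quadratic piece $D_2$ contains a cell structure modelled on $\Sigma\Rbb P^2$, i.e.\ on $BC_2$ restricted to $\Rbb P^2$. The composite to $\operatorname{BGL}_1(\Sbb^\wedge_2)$, restricted there, is governed by the multiplicative transfer $p_\otimes(u) \in (\Sbb^\wedge_2)^0(BC_2)$. Writing $u = 1 + 4v$, I expect $p_\otimes(u)$ to be expressible through the additive transfer $p_!(1) = 2 + y$ and $\eta$-corrections from Proposition~\ref{prop::eta_formula}, yielding a multiple such as $10y$ after restricting to $\Rbb P^2$.

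Mapping along $\Sbb \to \operatorname{KO}$, Corollary~\ref{cor::10y_not_zero} shows this class is nonzero, detected by $w_2 = a^2$. I would translate that nonvanishing into the statement that the $Q^1$-cell attaches nontrivially in $X$, i.e.\ $\alpha^2 \neq 0$, contradicting Step 2. The hardest step will be the precise identification of $p_\otimes(1+4v)$ restricted to $\Rbb P^2$ and making the passage from $\operatorname{KO}$-nontriviality to a mod-$2$ cup square rigorous. I would first try to do this through the Stiefel--Whitney class $w_2$ of the associated bundle pulled back along $\Sigma^{-1}$ of the cohomological suspension.
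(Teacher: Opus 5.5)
\textbf{There is a genuine gap: the contradiction is placed in the wrong space, and Step 3 tries to prove a false statement.}

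Your $\alpha$ is the unique nonzero class in $H^1(X;\Fbb_2)\cong\Fbb_2$, namely $d\circ\rho$. Since $\pi_0 Mf\cong\Zbb/4$ forces $\rho$ to land entirely in $1+4\Zbb_2\cong\Zbb_2$, $\alpha$ is the mod $2$ reduction of the $2$-adic class $\lambda^{-1}\circ\rho\in H^1(X;\Zbb_2)$. (The sign is not free: a value $\equiv 3 \pmod 8$ would give $\pi_0=\Zbb/2$.) Hence $\alpha^2=\operatorname{Sq}^1\alpha=0$ unconditionally, and no transfer or $\operatorname{KO}$ computation can make $\alpha^2\neq 0$ in $H^2(X;\Fbb_2)$.

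The obstruction only appears two deloopings up. The cohomology suspension $\sigma^*$ kills cup products, so $\beta^2=\operatorname{Sq}^3\beta\in H^6(Y;\Fbb_2)$ carries information that is invisible on $X$. This is where the paper derives its contradiction.

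Step 2 is also a non sequitur, even though its conclusion holds for the reason above. The cup square on $H^*(X)$ is dual to the diagonal coproduct on $H_*(X)$, not to the Pontryagin product. So $x_1^2=0$ says nothing directly about $\alpha^2$.

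In the paper, the two ingredients you found play the opposite roles.
\begin{enumerate}
\item The Dyer--Lashof input gives non-vanishing. The relevant operation is the top $\mathbb{E}_2$-operation $Q^2(x_1)=0$, not $Q^1$. Since $Q^2$ commutes with $\sigma_*$, it gives $u^2=0$ in the Pontryagin ring $H_*(Z;\Fbb_2)$, where $X\simeq\Omega Z$, $Z\simeq\Omega Y$ and $u=\sigma_*x_1$. A bar-construction argument then yields $\beta^2\neq 0$: the cycle $[V|V]+[H]$ with $\partial H=V^2$ has reduced coproduct $\sigma_*u\otimes\sigma_*u$.
\item The multiplicative transfer gives vanishing. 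The computation $P_2(\underline{5})=\underline{25}+10y$, with $i^*(10y)\neq 0$ in $\operatorname{KO}^0(\Rbb P^2)$, identifies the $k$-invariant $\kappa_2$ of the pullback $E$ of $\mathfrak{gl}_1(\Sbb^\wedge_2)$ along $n\mapsto 5^n$ as $\operatorname{Sq}^3\circ r_2$. This also uses $\kappa_1=0$ and the fact that $\eta$ acts trivially on $\pi_1\mathfrak{gl}_1(\Sbb^\wedge_2)$. Because $g$ lifts to $T^+=\Omega^\infty\Sigma^3 E$, the next $k$-invariant pulls back to zero, so $\beta^2=\operatorname{Sq}^3\beta=0$.
\end{enumerate}
To repair your plan, work with $\beta$ on $Y$ and swap the roles: homology gives non-vanishing of $\beta^2$, and the transfer gives its vanishing.
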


\begin{remark}
    Any (unital) $\mathbb{E}_2$-ring that is additively equivalent to $H\Zbb/4$ must be equivalent to $H\Zbb/4$ as $\mathbb{E}_2$-rings. This is because any unital commutative ring of order $4$ whose additive group is isomorphic to $\Zbb/4$ must also be isomorphic to $\Zbb/4$ as rings. The desired equivalence is supplied by the zeroth Postnikov truncation, which can be promoted to be $\mathbb{E}_2$. This means the statement in Theorem~\ref{thm::no_e2_complete} can be promoted to saying that there is no $\mathbb{E}_2$-map $f$ such that $Mf$ is even additively equivalent to $H\Zbb/4$.
\end{remark}




\subsection{Reducing To the Path-Connected $2$-Complete Case}

For now we suppose $H\Zbb/4$ is an $\mathbb{E}_2$-Thom spectrum over $\mathbb{S}_{(2)}$ arising from an $\mathbb{E}_2$-map:
\[\phi: X \to \operatorname{Pic}(\mathbb{S}_{(2)}).\]

Let us suppose that $M\phi$ is just additively equivalent to $H\Zbb/4$. Since colimits commute with coproducts, we can write
\[M\phi \simeq \bigvee_{Y \in \pi_0(X)} M(\phi|_{Y}).\]
On the other hand, by examining the homotopy groups, there are no non-trivial spectra that is a retract of $H\Zbb/4$ other than $H\Zbb/4$ and the zero spectrum. The unital map $X \to \operatorname{Pic}(\Sbb_{(2)})$ must give a non-zero summand of the Thom spectrum on the component of $X$ at identity. Thus, we can without loss assume that $X$ is path-connected. As an $\mathbb{E}_2$-map is unital, we now have a map:
\[\phi: X \to \operatorname{BGL}_1(\mathbb{S}_{(2)}).\]

Now we explain why it suffices to prove this over the $2$-complete sphere spectrum.

\begin{lemma}
    The $2$-completion map $\mathbb{S}_{(2)} \to \mathbb{S}^{\wedge}_2$ induces an $\mathbb{E}_2$-equivalence
    \[M(\underbrace{X \xrightarrow{\phi} \operatorname{BGL}_1(\mathbb{S}_{(2)}) \to \operatorname{BGL}_1(\mathbb{S}^{\wedge}_2}_{f}) \cong H\Zbb/4.\]
\end{lemma}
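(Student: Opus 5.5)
We apply the change-of-rings formula (Theorem~\ref{thm::change-of-rings}) to the $\mathbb{E}_\infty$-map $\Sbb_{(2)} \to \Sbb^{\wedge}_2$. It gives
\[M_{\Sbb^{\wedge}_2}(f) \simeq M_{\Sbb_{(2)}}(\phi) \otimes_{\Sbb_{(2)}} \Sbb^{\wedge}_2 \simeq H\Zbb/4 \otimes_{\Sbb_{(2)}} \Sbb^{\wedge}_2 .\]
Two things then need to be checked. First, this equivalence is one of $\mathbb{E}_2$-algebras. Second, the right-hand side is again $H\Zbb/4$.

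\textbf{Multiplicativity.} The functor $\operatorname{Pic}(\Sbb_{(2)}) \to \operatorname{Pic}(\Sbb^{\wedge}_2)$ is induced by the symmetric monoidal, colimit-preserving functor $-\otimes_{\Sbb_{(2)}}\Sbb^{\wedge}_2 \colon \operatorname{Mod}_{\Sbb_{(2)}} \to \operatorname{Mod}_{\Sbb^{\wedge}_2}$. Hence the post-composition $f$ is again an $\mathbb{E}_2$-map. The Thom spectrum is the colimit of an $\mathbb{E}_2$-monoidal diagram, so the comparison map
\[\operatorname{colim}(X \to \operatorname{Mod}_{\Sbb_{(2)}}) \otimes \Sbb^{\wedge}_2 \to \operatorname{colim}(X \to \operatorname{Mod}_{\Sbb^{\wedge}_2})\]
is a map of $\mathbb{E}_2$-$\Sbb^{\wedge}_2$-algebras. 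This is the $\mathbb{E}_n$-functoriality of Thom spectra in \cite{Antol_n_Camarena_2018} and \cite{Devalapurkar_2024}. By Theorem~\ref{thm::change-of-rings} it is an equivalence of underlying spectra. By the preceding remark it suffices anyway to know the underlying spectrum, since any $\mathbb{E}_2$-ring additively equivalent to $H\Zbb/4$ is $\mathbb{E}_2$-equivalent to it.

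\textbf{Identifying the base change.} The remaining point, and the only real content, is that $H\Zbb/4 \otimes_{\Sbb_{(2)}} \Sbb^{\wedge}_2 \simeq H\Zbb/4$. Let $C$ be the cofiber of $\Sbb_{(2)} \to \Sbb^{\wedge}_2$. The homotopy groups of $\Sbb_{(2)}$ are finitely generated $\Zbb_{(2)}$-modules, so the completion map is an equivalence after smashing with $\Sbb/2$. Therefore $C/2 \simeq 0$, so multiplication by $2$ is an equivalence on $C$, and $C$ is a rational spectrum. Since $H\Zbb/4$ is $2$-power torsion, $H\Zbb/4 \otimes_{\Sbb_{(2)}} C \simeq 0$, and the natural map
\[H\Zbb/4 \to H\Zbb/4 \otimes_{\Sbb_{(2)}} \Sbb^{\wedge}_2\]
is an equivalence. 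One should check that this map is compatible with the $\mathbb{E}_2$-structure; it is, because it is the unit of base change and is therefore a map of $\mathbb{E}_2$-rings.

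Combining the two steps gives $Mf \simeq H\Zbb/4$ as $\mathbb{E}_2$-$\Sbb^{\wedge}_2$-algebras. I expect the main obstacle to be the bookkeeping of $\mathbb{E}_2$-structures in the change-of-rings formula, which Theorem~\ref{thm::change-of-rings} states only additively. The preceding remark lets us sidestep it by reducing to the underlying additive equivalence.
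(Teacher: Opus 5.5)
Your proposal is correct and follows essentially the same route as the paper. Both arguments apply the change-of-rings formula and then show that $H\Zbb/4 \otimes_{\Sbb_{(2)}} C \simeq 0$ for the (co)fiber $C$ of the completion map. The paper gets this from the completion being an $H\Fbb_2$-homology isomorphism, whereas you get it from $C/2 \simeq 0$, which makes $C$ rational and hence annihilated by $H\Zbb/4$. Your extra discussion of $\mathbb{E}_2$-compatibility, via functoriality of Thom spectra or the preceding remark, fills in a point the paper's proof leaves implicit.
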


\begin{proof}
From Theorem~\ref{thm::change-of-rings}, we know there is an equivalence
  \[Mf \cong M\phi \otimes_{\Sbb_{(2)}} \Sbb^{\wedge}_{2} \cong H\Zbb/4 \otimes_{\Sbb_{(2)}} \Sbb_2^{\wedge}.\]
Tensoring the cofiber sequence $C \to \Sbb_{(2)} \to \Sbb_2^{\wedge}$ with $H\Zbb/4$ over ${\Sbb_{(2)}}$ gives the sequence
\[C \otimes_{\Sbb_{(2)}} H\Zbb/4 \to H\Zbb/4 \to Mf.\]
Now since $H\Zbb/4$ is $2$-local, we have that $$C \otimes_{\Sbb_{(2)}} H\Zbb/4 \cong C \otimes_{\Sbb_{(2)}} (\Sbb_{(2)} \otimes H\Zbb/4) \cong C \otimes H\Zbb/4. $$
The term $C \otimes H\Zbb/4$ is $0$ as $\Sbb_{(2)} \to \Sbb^{\wedge}_2$ is an $H\Fbb_2$-homology isomorphism (and hence an $H\Zbb/4$-homology isomorphism), since $2$-completion is the same as localizing by $H\Fbb_2$ for connective spectra.
\end{proof}

Combining the discussions in this section shows that Theorem~\ref{thm::no_e2_complete} implies Theorem~\ref{thm::main}.

\section{Constructing the class $\alpha$}\label{sec::construct_alpha}

In the remainder of the paper, we will work to prove Theorem~\ref{thm::no_e2_complete}. Throughout, we assume for contradiction that there is an $\mathbb{E}_2$-map
\[f: X \to \operatorname{BGL}_1(\mathbb{S}^{\wedge}_2)\]
such that $X$ is path-connected and $Mf \cong H\Zbb/4$ as $\mathbb{E}_2$-$\mathbb{S}^{\wedge}_2$-algebras.\\

In this section, we will explain how to construct a cohomology class $\alpha \in H^3(Y; \Fbb_2)$ (where $Y$ is a two-fold delooping of $X$) that will be used in the next section to derive a contradiction.

\begin{definition}
    We define the map $\rho$ by applying $\pi_1$ to $f$
    \[\rho: \pi_1(X) \to \pi_0 \operatorname{GL}_1(\mathbb{S}^{\wedge}_2) \cong \Zbb_2^{\times} \cong \{\pm 1\} \times (1 + 4 \Zbb_2).\]
\end{definition}

\begin{lemma}\label{lem::rho_non_zero}
    For each $g \in \pi_1(X)$, $\rho(g) \equiv 1 \pmod{4}$. Furthermore, there exists $g \in \pi_1(X)$ such that $\rho(g) \equiv 5 \pmod{8}$.
\end{lemma}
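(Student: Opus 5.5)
The plan is to read off both claims from the bottom of the Thom spectrum, namely $\pi_0(Mf)$, using the quotient description of Proposition~\ref{prop::quotient}. Since $X$ is path-connected and $\mathbb{E}_2$, it is grouplike and $X \simeq B G$ with $G = \Omega X$. The map $f$ is $\mathbb{E}_2$, hence in particular $\mathbb{E}_1$, so Proposition~\ref{prop::quotient} gives $Mf \simeq (\Sbb^{\wedge}_2)_{hG}$. Here $G$ acts on $\Sbb^{\wedge}_2$ through $\Omega f: G \to \operatorname{GL}_1(\Sbb^{\wedge}_2)$.

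Taking $\pi_0$ of homotopy orbits of a connective spectrum only sees $\pi_0 G = \pi_1(X)$, acting on $\pi_0(\Sbb^{\wedge}_2) = \Zbb_2$ through $\rho$. Concretely, the homotopy orbit spectral sequence, or just the right exactness of $\pi_0$ on colimits of connective spectra, gives
\[\pi_0(Mf) \cong H_0(\pi_1(X); \Zbb_2)\]
with the twisted coefficients $\rho$. This is the module of coinvariants
\[\Zbb_2 / \bigl(\rho(g)-1 : g \in \pi_1(X)\bigr).\]
Since $\Zbb_2$ is a local PID, this quotient is $\Zbb_2/2^{m}$, where $m$ is the minimum over $g$ of the $2$-adic valuations $v_2(\rho(g)-1)$.

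Now $\pi_0(Mf) \cong \Zbb/4$ forces $m = 2$. The inequality $m \geq 2$ says every $\rho(g) \equiv 1 \pmod 4$, which is the first claim. Note that $\rho(g) = -1$ would give valuation $1$, so it is excluded. The equality $m=2$ says some $g$ has $v_2(\rho(g)-1) = 2$ exactly, that is, $\rho(g) \equiv 5 \pmod 8$, which is the second claim.

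The main point to check carefully is the identification of $\pi_0$ of the homotopy orbits with the coinvariants of $\pi_1(X)$ acting via $\rho$. There are two things to verify:
\begin{enumerate}
\item that the action on $\pi_0$ induced by $\Omega f$ is multiplication by $\rho(g)$, which holds by the definition of $\rho$ as $\pi_1 f = \pi_0 \Omega f$ landing in $\pi_0\operatorname{GL}_1 = \Zbb_2^\times$;
\item that $\pi_0(R_{hG}) = \pi_0(R)_{\pi_0 G}$ for connective $R$ and connected-component data, which follows from the bar construction / skeletal filtration of $BG$, where only the $0$- and $1$-cells contribute to $\pi_0$.
\end{enumerate}
One could alternatively phrase this as a cellular computation of $Mf$ over the $1$-skeleton of $X$, where a loop $g$ attaches a cell by $\rho(g)-1$.
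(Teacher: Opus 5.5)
Your proposal is correct and follows the paper's proof. Both write $Mf \simeq (\Sbb^{\wedge}_2)_{h\Omega X}$ via Proposition~\ref{prop::quotient} and use that $\pi_0$ commutes with this colimit of connective spectra to get the coinvariants $\Zbb_2/\langle \rho(g)-1 \rangle$. Both then read off from $\pi_0(Mf)\cong \Zbb/4$ that this ideal is exactly $\langle 4 \rangle$; your $2$-adic valuation phrasing of this last step is just a restatement of the paper's ideal-theoretic one. You do not need $f$ to be $\mathbb{E}_1$ here: any map from the connected space $X \simeq B\Omega X$ into $\operatorname{BGL}_1(\Sbb^{\wedge}_2)$ already satisfies the hypotheses of Proposition~\ref{prop::quotient}.
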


\begin{proof}
    Observe that since $\pi_0(X) = 0$, $X$ is a group-like $A_{\infty}$-space and $X \simeq B \Omega X$. Thus, Proposition~\ref{prop::quotient} implies that we can write $Mf$ as the homotopy orbit
    \[Mf \simeq (\Sbb^{\wedge}_{2})_{h\Omega X}.\]
    Now take $\pi_0$ on both sides. Since $\pi_0$ is a left adjoint on connective spectra and homotopy orbit is a colimit, we have that
    \[\pi_0(Mf) \cong (\pi_0 \Sbb^{\wedge}_{2})_{\pi_0 \Omega X} = (\Zbb_{2})_{\pi_1(X)},\]
    where $\pi_1(X)$ acts on $\Zbb_{2}$ via $\rho$. This amounts now to 
    \[\pi_0(Mf) = \Zbb_2/I_\rho, I_{\rho} = \langle \rho(g) - 1\ |\ g \in \pi_1(X)\rangle.\]
    Since every non-zero ideal of $\Zbb_2$ is of the form $\langle 2^m \rangle$ for some $m \geq 0$ and we know that $\Zbb_2/I_{\rho} \cong \Zbb/4$, it follows that there is an equality of ideals $I_{\rho} = \langle 4 \rangle$. Thus we necessarily have that $\rho(g) \equiv 1 \pmod{4}$ for all $g \in \pi_1(X)$. Furthermore, there has to be some $g$ such that $\rho(g) \equiv 5 \pmod{8}$ (or else they would all land in $\langle 8 \rangle$).
\end{proof}

\begin{definition}\label{def::chi}
    We define $\Lambda$ as the group homomorphism
    \[\Lambda: 1 + 4 \Zbb_2 \to \Fbb_2,\quad \Lambda(1 + 4 a) = a \pmod{2}.\]
    We define $\chi: \pi_1(X) \to \Fbb_2$ to be the composition
    \[\chi = \Lambda \circ \rho.\]
\end{definition}

\begin{lemma}
    There is an isomorphism $H^1(X; \Fbb_2) \cong \Fbb_2$, and $\chi$ represents the non-zero element in $H^1(X; \Fbb_2)$.
\end{lemma}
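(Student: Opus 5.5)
Since $X$ is path-connected, $H^1(X;\Fbb_2)\cong \operatorname{Hom}(\pi_1(X),\Fbb_2)\cong \operatorname{Hom}(H_1(X;\Zbb),\Fbb_2)$ by the universal coefficient theorem and Hurewicz. The plan is to compute $H_1(X;\Fbb_2)$ using the Thom isomorphism, and then to check that $\chi$ is a nonzero homomorphism. First, $\chi$ is a well-defined homomorphism. By Lemma~\ref{lem::rho_non_zero}, $\rho$ lands in $1+4\Zbb_2$, so $\Lambda\circ\rho$ makes sense. Moreover, $\Lambda$ is a homomorphism, since $(1+4a)(1+4b)=1+4(a+b+4ab)$ and $a+b+4ab\equiv a+b \pmod 2$. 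By Lemma~\ref{lem::rho_non_zero} there is $g$ with $\rho(g)\equiv 5\pmod 8$, that is $\rho(g)=1+4a$ with $a$ odd, so $\chi(g)=1$. Hence $\chi$ is a nonzero class in $H^1(X;\Fbb_2)$.

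It remains to show that $H^1(X;\Fbb_2)$ is one-dimensional, i.e.\ $H_1(X;\Fbb_2)\cong\Fbb_2$. For this I apply Theorem~\ref{thm::thom_iso} with $R=\Sbb^\wedge_2$, $n=2$, and $A=H\Fbb_2$ viewed as an $\mathbb{E}_\infty$- (hence $\mathbb{E}_3$-) $\Sbb^\wedge_2$-algebra. The required $\mathbb{E}_2$-map $Mf\simeq H\Zbb/4\to H\Fbb_2$ is the reduction map, which is $\mathbb{E}_\infty$ and in particular $\mathbb{E}_2$. Note that $X$ is grouplike because it is connected. We obtain
\[H\Fbb_2\otimes_{\Sbb^\wedge_2} H\Zbb/4\simeq H\Fbb_2\otimes\Sigma^\infty_+X,\]
so $H_*(X;\Fbb_2)\cong \pi_*(H\Fbb_2\otimes_{\Sbb^\wedge_2}H\Zbb/4)$. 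Since $H\Zbb/4$ is $2$-complete and connective, the left side agrees with $H\Fbb_2\otimes H\Zbb/4$ (by the same $H\Fbb_2$-local argument used in Section~\ref{sec::reduction}). Therefore $H_*(X;\Fbb_2)\cong H_*(H\Zbb/4;\Fbb_2)$.

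The main step is then the low-degree computation of $H_1(H\Zbb/4;\Fbb_2)$. Use the cofiber sequence $H\Zbb\xrightarrow{4}H\Zbb\to H\Zbb/4$. Since $H_*(H\Zbb;\Fbb_2)=\mathcal{A}/\!/\mathcal{A}(0)$ dual is concentrated in degrees $0,2,3,\dots$ with $H_1=0$, and multiplication by $4$ is zero on mod $2$ homology, we get a short exact sequence $0\to H_1(H\Zbb)\to H_1(H\Zbb/4)\to H_0(H\Zbb)\to 0$ (all with $\Fbb_2$ coefficients). This gives $H_1(H\Zbb/4;\Fbb_2)\cong\Fbb_2$, generated by the class detecting the Bockstein for $\Zbb/4$. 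Alternatively, $\pi_1(H\Fbb_2\otimes H\Zbb/4)=\operatorname{Tor}_1^{\Zbb}(\Fbb_2,\Zbb/4)=\Fbb_2$ by the Künneth spectral sequence over $H\Zbb$ in the lowest degrees, which gives the same answer. Thus $H_1(X;\Fbb_2)\cong\Fbb_2$, so $H^1(X;\Fbb_2)\cong\Fbb_2$, and the nonzero class $\chi$ must be its generator.
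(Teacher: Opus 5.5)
Your proof is correct and follows the paper's route. It uses the Thom isomorphism along $H\Zbb/4\to H\Fbb_2$ to get $H_*(X;\Fbb_2)\cong H_*(H\Zbb/4;\Fbb_2)$, a low-degree computation showing $H_1\cong\Fbb_2$, and Lemma~\ref{lem::rho_non_zero} for $\chi\neq 0$; the only difference is that you compute $H_1(H\Zbb/4;\Fbb_2)$ from the cofiber sequence $H\Zbb\xrightarrow{4}H\Zbb\to H\Zbb/4$ (using $H_1(H\Zbb;\Fbb_2)=0$) instead of the paper's Bockstein sequence $H\Fbb_2\to H\Zbb/4\to H\Fbb_2$, which is if anything slightly cleaner.
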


\begin{proof}
The reduction modulo $2$ map $\Zbb/4 \to \Zbb/2$ induces a Thom isomorphism (Theorem~\ref{thm::thom_iso}) between
\[H\Zbb/2 \otimes_{\mathbb{S}_{2}^{\wedge}} Mf \simeq H\Zbb/2 \otimes \Sigma_+^{\infty} X.\]
Since the map $\Sbb \to \Sbb^{\wedge}_2$ is a $H\Zbb/2$-homology isomorphism, we can drop the subscript $\Sbb_{2}^{\wedge}$ and have that
\[H\Zbb/2 \otimes Mf \simeq H\Zbb/2 \otimes \Sigma_+^{\infty} X.\]
Since $Mf \simeq H\Zbb/4$, this shows that $H_i(X; \Fbb_2) \cong H_i(H\Zbb/4; \Zbb/2)$ for each $i$. Consider the sequence
\[H\Zbb/2 \xrightarrow{i} H\Zbb/4 \xrightarrow{q} H\Zbb/2\]
and its long exact sequence for homology in $\Fbb_2$ coefficient. The connecting map $H_{i+1}(H\Zbb/2; \Fbb_2) \to H_i(H\Zbb/2; \Fbb_2)$ is the homological Bockstein and hence zero when $i = 1$ (as $\operatorname{Sq}^1 \operatorname{Sq}^1 = 0$). Since $H_0(H\Zbb/4; \Fbb_2) = \Fbb_2$, the map landing in it is $0$, and we have
\[0 \to H_1(H\Zbb/2; \Fbb_2) \to H_1(H\Zbb/4; \Fbb_2) \to H_1(H\Zbb/2; \Fbb_2) \xrightarrow{\cong} \Fbb_2 \to 0. \]
This shows that $H_1(X; \Fbb_2) \cong H_1(H\Zbb/4; \Fbb_2) \cong \Fbb_2$. Since each degree of $H_*(X; \Fbb_2) \cong H_*(H\Zbb/4; \Fbb_2)$ is clearly finitely generated, it follows that $H^1(X; \Fbb_2)$ is the dual of $H_1(X; \Fbb_2)$ and is also $\Fbb_2$.\\

Finally, to show that $\chi \in H^1(X; \Fbb_2)$ is non-zero, it suffices to show the map is non-zero, which follows directly from Lemma~\ref{lem::rho_non_zero}.
\end{proof}

Recall the Kronecker pairing gives a map $\langle -, - \rangle: H^k(X; R) \times H_k(X; R) \to R$. 
\begin{definition}\label{def::chi_x}
    We choose $x \in H_1(X; \Fbb_2)$ as a homology class such that $\langle \chi, x \rangle = 1$ under the Kronecker pairing.
\end{definition}

In what follows, we will construct a class $\alpha$ using the fact that $f$ is $\mathbb{E}_2$.
\begin{definition}\label{def::y}
    Since $f$ is $\mathbb{E}_2$, we can write $X \simeq \Omega^2 Y$ for some $2$-connected $Y$ and find a map 
    \[g: Y \to B^3 \operatorname{GL}_1(\mathbb{S}^{\wedge}_2)\]
    such that $f$ deloops twice to $g$. For convenience, we also write $X \simeq \Omega Z$ for some 1-connected $Z$ and a map $q: Z \to B^2\operatorname{GL}_1(\mathbb{S}_2^{\wedge})$ such that $\Omega g = q$ and $\Omega q = f$.
\end{definition}

We define the class $\alpha$ as follows:
\begin{definition}
    We construct the class $\alpha$ as follows.
    \begin{enumerate}
        \item We write $\rho_3: B^3 \operatorname{GL}_1(\mathbb{S}^{\wedge}_2) \to K(\Zbb_2^{\times}, 3)$ to denote the third Postnikov truncation map. Note that this is $\Omega^{\infty}$ of the spectrum-level Postnikov truncation from $\Sigma^{3} \mathfrak{gl}_1(\mathbb{S}^{\wedge}_2) \to \Sigma^{3} H(\mathbb{Z}^{\times}_2)$.
        \item Using the decomposition $\Zbb^{\times} \cong \{\pm 1\} \times (1 + 4 \Zbb_2)$ and noting that $U^+ \coloneqq 1 + 4 \Zbb_2$ is generated by $5^n$ for $n \in \Zbb_2$, we write $d: \Zbb_2^{\times} \to \mathbb{F}_2$ as the map $d((-1)^{\epsilon} (5)^m) = m \pmod{2}$.
        \item We define $\alpha$ to be the map
        \[Y \xrightarrow{g} B^3\operatorname{GL}_1(\mathbb{S}^{\wedge}_2) \xrightarrow{\rho_3} K(\Zbb_2^{\times},3) \xrightarrow{K(d, 3)} K(\Fbb_2, 3).\]
        This represents an element
        \[\alpha \in H^3(Y; \Fbb_2).\]
    \end{enumerate}
\end{definition}

For our purposes, we also derive an equivalent formulation of $\alpha$ below.

\begin{definition}\label{def::E}
    We write $\lambda: \Zbb_2 \to U^+ \coloneqq 1+4\Zbb_2$ to be the isomorphism given by sending $n \mapsto 5^n$. We write $E$ to denote the connective spectrum given in the following (homotopy) pullback square
\[\begin{tikzcd}[ampersand replacement=\&]
	E \& {\mathfrak{gl}_1(\mathbb{S}^{\wedge}_2)} \\
	{H\mathbb{Z}_2} \& {H\Zbb_2^\times}
	\arrow[from=1-1, to=1-2]
	\arrow[from=1-1, to=2-1]
	\arrow["{\text{0th Postnikov truncation}}", from=1-2, to=2-2]
	\arrow["{\text{inclusion} \circ \lambda}"', from=2-1, to=2-2]
\end{tikzcd}.\]
We write $T^+ = \Omega^{\infty} \Sigma^3 E$. Since $\Sigma$ is an equivalence and $\Omega^{\infty}$ is right adjoint, we can write $T^{+}$ as the homotopy pullback:
\[\begin{tikzcd}[ampersand replacement=\&]
	{T^+} \& {B^3 \operatorname{GL}_1(\mathbb{S}^{\wedge}_2)} \\
	{K(\mathbb{Z}_2, 3)} \& {K(\mathbb{Z}_2^{\times}, 3)}
	\arrow[from=1-1, to=1-2]
	\arrow[from=1-1, to=2-1]
	\arrow["{\rho_3}", from=1-2, to=2-2]
	\arrow[from=2-1, to=2-2]
\end{tikzcd}.\]
\end{definition}

Now consider the maps
\begin{equation}\label{eq::Y}
\begin{tikzcd}[ampersand replacement=\&]
	Y \&\& \\
	\& {T^+} \& {B^3 \operatorname{GL}_1(\mathbb{S}^{\wedge}_2)} \\
	{K(\pi_3(Y), 3)} \& {K(\mathbb{Z}_2, 3)} \& {K(\mathbb{Z}_2^{\times}, 3)}
	\arrow["g", from=1-1, to=2-3]
	\arrow[from=1-1, to=3-1]
	\arrow[from=2-2, to=2-3]
	\arrow[from=2-2, to=3-2]
	\arrow["{\rho_3}", from=2-3, to=3-3]
	\arrow["{\lambda^{-1} \circ \rho}"', from=3-1, to=3-2]
	\arrow["{\text{inclusion} \circ \lambda}"', from=3-2, to=3-3]
\end{tikzcd}
\end{equation}

\begin{lemma}\label{lem::y_t_plus}
    This diagram in (\ref{eq::Y}) commutes up to homotopy and hence induces a lift $g^+: Y \to T^+$ which is unique up to homotopy.
\end{lemma}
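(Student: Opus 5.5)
The plan is to reduce everything to the statement that, for the $2$-connected space $Y$, homotopy classes of maps $Y \to K(A,3)$ are detected on $\pi_3$. First I have to make sense of the bottom row of (\ref{eq::Y}). The left vertical map $Y \to K(\pi_3(Y),3)$ is the first Postnikov section of the $2$-connected space $Y$. It is the class in $H^3(Y;\pi_3(Y)) \cong \operatorname{Hom}(\pi_3(Y),\pi_3(Y))$ corresponding to the identity.

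Under the identification $\pi_3(Y) \cong \pi_1(X)$ coming from $X \simeq \Omega^2 Y$, the map $\rho$ is defined on $\pi_3(Y)$. By Lemma~\ref{lem::rho_non_zero}, every $\rho(g)$ is $\equiv 1 \pmod 4$, so $\rho$ lands in $U^+ = 1+4\Zbb_2$. Hence $\lambda^{-1}\circ\rho : \pi_3(Y) \to \Zbb_2$ is a well-defined group homomorphism, and $K(\lambda^{-1}\circ\rho,3)$ makes sense.

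Next I would record the key fact. Since $Y$ is $2$-connected, Hurewicz gives $H_2(Y)=0$ and $H_3(Y)\cong\pi_3(Y)$. The universal coefficient theorem then gives, for any abelian group $A$,
\[
[Y, K(A,3)] \cong H^3(Y;A) \cong \operatorname{Hom}(\pi_3(Y), A),
\]
where the $\operatorname{Ext}$ term vanishes because $H_2(Y)=0$, and the isomorphism sends a map to its effect on $\pi_3$. Likewise $H^2(Y;A)=0$.

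For commutativity, both composites $Y \to K(\Zbb_2^\times,3)$ are determined by their effect on $\pi_3$. The upper composite $\rho_3\circ g$ induces $\pi_3(g)$, since $\rho_3$ is an isomorphism on $\pi_3$. Because $f = \Omega^2 g$, the map $\pi_3(g)$ is identified with $\pi_1(f) = \rho$; this is the main point to check carefully, namely that the looping isomorphisms $\pi_3(B^3\operatorname{GL}_1) \cong \pi_1(B\operatorname{GL}_1)\cong \pi_0\operatorname{GL}_1$ are compatible with those for $Y$. The lower composite induces $\operatorname{incl}\circ\lambda\circ\lambda^{-1}\circ\rho = \rho$ on $\pi_3$. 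Therefore the two composites are homotopic.

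For the lift, the square defining $T^+$ is a homotopy pullback. A choice of homotopy between the two composites therefore produces a map $g^+ : Y \to T^+$ lifting both $g$ and the map $Y \to K(\Zbb_2,3)$. For uniqueness, the lifts with these two fixed legs are a quotient of the set of self-homotopies of a map into $K(\Zbb_2^\times,3)$, that is, of $[Y, \Omega K(\Zbb_2^\times,3)] = H^2(Y;\Zbb_2^\times) = 0$. Moreover, any lift of $g$ has its $K(\Zbb_2,3)$-component determined on $\pi_3$, because $\operatorname{incl}\circ\lambda$ is injective. Hence that leg is forced to be the one in the diagram, and $g^+$ is unique up to homotopy.

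The only genuinely delicate step is the identification $\pi_3(g)=\rho$ (including sign and ordering conventions for the delooping isomorphisms). Everything else is Hurewicz and universal-coefficient bookkeeping.
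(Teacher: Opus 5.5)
Your proposal is correct and follows the paper's route. Hurewicz plus the universal coefficient theorem identify $[Y,K(A,3)]$ with $\operatorname{Hom}(\pi_3(Y),A)$, and you check that both composites induce $\rho$ on $\pi_3$; your extra argument for the existence and uniqueness of $g^+$ (the homotopy pullback, $H^2(Y;\Zbb_2^{\times})=0$, and injectivity of $\operatorname{incl}\circ\lambda$) fills in a step the paper only asserts.
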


\begin{proof}
Since $Y$ is $2$-connected, the Hurewicz theorem and the universal coefficient theorem imply that
\[H^3(Y; \Zbb_2^{\times}) \cong \operatorname{Hom}(\pi_3(Y), \Zbb_2^{\times}) \cong \operatorname{Hom}(\pi_1(X), \Zbb_2^{\times}).\]
It suffices to prove that they give the same cohomology class. Now clearly $\rho_3 \circ g$ gives the map induced on $\pi_1$ for $f: X \to \operatorname{BGL}_1(\mathbb{S}_2^{\wedge})$, and the other direction also clearly gives the same class.
\end{proof}

\begin{definition}\label{def::alpha_tilde}
    We define the class $\Tilde{\alpha} \in H^3(Y; \mathbb{Z}_2)$ to be the map
    \[\Tilde{\alpha}: Y \xrightarrow{g^+} T^+ \to K(\mathbb{Z}_2, 3).\]
\end{definition}

\begin{proposition}\label{prop::reduction}
    Let $r_2: \mathbb{Z}_2 \to \mathbb{F}_2$ denote the reduction modulo $2$ map. Then \[(r_2)_*(\Tilde{\alpha}) = \alpha.\]
\end{proposition}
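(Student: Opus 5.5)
The plan is to reduce the statement to a commutative diagram of Eilenberg--MacLane spaces. Both $\alpha$ and $(r_2)_*(\Tilde{\alpha})$ factor through the same map $g^+ : Y \to T^+$, so it suffices to show that two maps $T^+ \to K(\Fbb_2,3)$ are homotopic. The first is
\[T^+ \to B^3\operatorname{GL}_1(\Sbb^{\wedge}_2) \xrightarrow{\rho_3} K(\Zbb_2^\times,3) \xrightarrow{K(d,3)} K(\Fbb_2,3).\]
The second is
\[T^+ \to K(\Zbb_2,3) \xrightarrow{K(r_2,3)} K(\Fbb_2,3).\]
Indeed, $g$ equals the composite $Y \xrightarrow{g^+} T^+ \to B^3\operatorname{GL}_1(\Sbb^{\wedge}_2)$ by Lemma~\ref{lem::y_t_plus}, and $\Tilde{\alpha}$ is by definition the composite of $g^+$ with the projection $T^+ \to K(\Zbb_2,3)$.

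The key step is to use the homotopy commutativity of the pullback square in Definition~\ref{def::E}. By that square, $\rho_3$ composed with $T^+ \to B^3\operatorname{GL}_1(\Sbb^{\wedge}_2)$ is homotopic to
\[T^+ \to K(\Zbb_2,3) \xrightarrow{K(\text{inclusion}\circ\lambda,3)} K(\Zbb_2^\times,3).\]
Postcomposing with $K(d,3)$, and using that $K(-,3)$ is a functor on abelian groups (or at the spectrum level, $\Sigma^3 H(-)$ is functorial), the first map becomes $T^+ \to K(\Zbb_2,3)$ followed by $K(d\circ \text{inclusion}\circ\lambda,3)$.

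The remaining step is the algebraic identity $d\circ\text{inclusion}\circ\lambda = r_2$ as homomorphisms $\Zbb_2 \to \Fbb_2$. For $n\in\Zbb_2$ we have $\lambda(n) = 5^n = (-1)^0 5^n$, so $d(5^n) = n \bmod 2 = r_2(n)$. For this to make sense, $d$ must be well defined. This holds because $\Zbb_2^\times \cong \{\pm1\}\times U^+$ and $n\mapsto 5^n$ is an isomorphism $\Zbb_2 \to U^+$, so $m$ is uniquely determined as an element of $\Zbb_2$ and its reduction mod $2$ is a homomorphism.

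The only point requiring care is the bookkeeping of the homotopy-commutativity. The pullback square and Lemma~\ref{lem::y_t_plus} give identifications only up to homotopy, while cohomology classes only see homotopy classes of maps, so this causes no trouble. I would phrase the whole argument at the level of the spectra $\Sigma^3 E$, $\Sigma^3 H\Zbb_2$ and $\Sigma^3 H\Zbb_2^\times$, and then apply $\Omega^\infty$. This makes the identification of $K(\text{homomorphism},3)$ with the induced map on cohomology coefficients, $(r_2)_*$, immediate.
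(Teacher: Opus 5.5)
Your proposal is correct and follows the same argument as the paper. The paper's proof only records that the square of groups commutes, i.e. $d \circ (U^+ \hookrightarrow \Zbb_2^{\times}) \circ \lambda = r_2$. You supply the bookkeeping it leaves implicit: you factor $g$ through $g^+$, use the homotopy-commutative pullback square defining $T^+$, and apply functoriality of $K(-,3)$.
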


\begin{proof}
    This follows because the following diagram of groups commutes:
\[\begin{tikzcd}[ampersand replacement=\&]
	{\mathbb{Z}_2} \& {\mathbb{F}_2} \\
	{U^+} \& {\mathbb{Z}_2^{\times}}
	\arrow["{r_2}", from=1-1, to=1-2]
	\arrow["{n \mapsto 5^n}"', from=1-1, to=2-1]
	\arrow[hook, from=2-1, to=2-2]
	\arrow["d"', from=2-2, to=1-2]
\end{tikzcd}.\]
\end{proof}

By Lemma~\ref{lem::coh_map_loop}, we also know that:
\begin{lemma}
  We have the identification $(\sigma^*)^2(\alpha) = d \circ \rho = \chi$.
\end{lemma}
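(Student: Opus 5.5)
We need to show that $(\sigma^*)^2(\alpha) \in H^1(X;\Fbb_2)$ is the class $d\circ\rho$, and that this class coincides with $\chi = \Lambda\circ\rho$. The plan is to apply Lemma~\ref{lem::coh_map_loop} twice, which turns the iterated cohomological suspension into the double loop of the representing map. We then identify that double loop on $\pi_1$.

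\textbf{Step 1: loop the representing map twice.} By definition $\alpha$ is represented by the composite $K(d,3)\circ\rho_3\circ g\colon Y\to K(\Fbb_2,3)$. Applying Lemma~\ref{lem::coh_map_loop} twice, $(\sigma^*)^2(\alpha)$ is represented by
\[\Omega^2\bigl(K(d,3)\circ\rho_3\circ g\bigr)=\Omega^2K(d,3)\circ\Omega^2\rho_3\circ\Omega^2 g .\]
We analyze the three factors in turn:
\begin{enumerate}
\item By Definition~\ref{def::y}, $\Omega^2 g=\Omega q=f$ under the identification $X\simeq\Omega^2Y$.
\item Since $\rho_3$ is $\Omega^\infty$ of the spectrum-level Postnikov truncation $\Sigma^3\mathfrak{gl}_1(\Sbb^\wedge_2)\to\Sigma^3H\Zbb_2^\times$, looping twice gives the first Postnikov truncation $\operatorname{BGL}_1(\Sbb^\wedge_2)\to K(\Zbb_2^\times,1)$.
\item $\Omega^2K(d,3)=K(d,1)$.
\end{enumerate}
Thus $(\sigma^*)^2(\alpha)$ is represented by $X\xrightarrow{f}\operatorname{BGL}_1\to K(\Zbb_2^\times,1)\xrightarrow{K(d,1)}K(\Fbb_2,1)$.

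\textbf{Step 2: read off the class on $\pi_1$.} A map $X\to K(\Fbb_2,1)$ is determined by its effect on $\pi_1$, since $H^1(X;\Fbb_2)=\operatorname{Hom}(\pi_1X,\Fbb_2)$. The composite above induces $d\circ\pi_1(f)=d\circ\rho$ on $\pi_1$, which gives the first equality.

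\textbf{Step 3: compare $d\circ\rho$ with $\chi=\Lambda\circ\rho$.} By Lemma~\ref{lem::rho_non_zero}, $\rho$ lands in $U^+=1+4\Zbb_2$, so it suffices to check that $d|_{U^+}=\Lambda$. Both are homomorphisms $U^+\to\Fbb_2$, and $U^+$ is topologically generated by $5$. On this generator, $d(5)=1$ and $\Lambda(5)=\Lambda(1+4\cdot1)=1$. By continuity (both maps factor through $U^+/(U^+)^2$), they agree on all of $U^+$.

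\textbf{Main obstacle.} The only real subtlety is bookkeeping of the identifications: that $\Omega^2$ of the spectrum-level truncation is the truncation on $\operatorname{BGL}_1$, and that the equivalence $X\simeq\Omega^2Y$ is compatible with $\Omega^2g=f$. Both follow from the definitions, so this step is a matter of care rather than a genuine difficulty.
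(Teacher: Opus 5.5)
Your proof is correct and is essentially the paper's argument. The paper calls $(\sigma^*)^2(\alpha) = d\circ\rho$ clear, which is your double application of Lemma~\ref{lem::coh_map_loop} to $K(d,3)\circ\rho_3\circ g$, and then notes that $d$ and $\Lambda$ agree on the image of $\rho \subseteq U^+$. One small point on your Step 3: no continuity is needed, since any homomorphism to $\Fbb_2$ kills $(U^+)^2 = 1+8\Zbb_2$, so your factorization through $U^+/(U^+)^2 \cong \Zbb/2$ (generated by $5$) already completes the argument.
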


\begin{proof}
The equality $(\sigma^*)^2(\alpha) = d \circ \rho$ is clear. It remains to check that $\chi$, which is defined as $\Lambda \circ \rho$ in Definition~\ref{def::chi}, is equal to $d \circ \rho$, but $d$ and $\Lambda$ coincide on the image of $\rho$.
\end{proof}

\begin{definition}
    We define $u \coloneqq \sigma_* x \in H_2(Z; \mathbb{F}_2)$ and $z \coloneqq \sigma_* u \in H_3(Y; \mathbb{F}_2)$.
\end{definition}

As a corollary, we have that:
\begin{corollary}\label{cor::alpha_not_zero}
The class $\alpha$ is satisfies $\langle \alpha, z \rangle = 1$ and is hence non-zero in $H^3(Y; \mathbb{F}_2)$.
\end{corollary}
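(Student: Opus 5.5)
The plan is to use the compatibility between the Kronecker pairing and (co)homological suspension. The key identity is that for a based space $W$ with loop space $\Omega W$, and classes $\beta \in \widetilde{H}^{i+1}(W;\Fbb_2)$ and $c \in \widetilde{H}_i(\Omega W;\Fbb_2)$, we have
\[\langle \sigma^*\beta, c\rangle = \langle \beta, \sigma_* c\rangle.\]
This holds because both suspensions are induced by the same evaluation map $\Sigma\Omega W \to W$. On the cohomology side, $\sigma^*$ is pullback along this map composed with the suspension isomorphism. On the homology side, $\sigma_*$ is the suspension isomorphism followed by pushforward along the same map. Naturality of the Kronecker pairing, together with its compatibility with the suspension isomorphism, then gives the identity.

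I would apply this identity twice, first with $W = Y$, $\Omega W \simeq Z$, and then with $W = Z$, $\Omega W \simeq X$. Here I use that $Y$ deloops $Z$ and $Z$ deloops $X$ as in Definition~\ref{def::y}. Since $u = \sigma_* x$ and $z = \sigma_* u$, this gives
\[\langle \alpha, z\rangle = \langle \alpha, \sigma_*\sigma_* x\rangle = \langle \sigma^*\alpha, \sigma_* x\rangle = \langle (\sigma^*)^2\alpha, x\rangle.\]
By the preceding lemma, $(\sigma^*)^2(\alpha) = \chi$. By the choice of $x$ in Definition~\ref{def::chi_x}, $\langle \chi, x\rangle = 1$. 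Hence $\langle \alpha, z\rangle = 1$. Since the Kronecker pairing is bilinear and $\langle 0, z\rangle = 0$, it follows that $\alpha \neq 0$.

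The main obstacle is checking that the two suspension maps really are adjoint, and in particular that the $\sigma_*$ coming from the identification $W' \simeq \Omega BW'$ agrees with the map induced by $\Sigma \Omega W \to W$. For this I would use that $\sigma_*$ is defined by the composite
\[\widetilde H_q(W') \cong \widetilde H_{q+1}(\Sigma W') \to \widetilde H_{q+1}(\Sigma \Omega B W') \to \widetilde H_{q+1}(BW'),\]
where the middle map is induced by $W' \simeq \Omega BW'$ and the last by the counit. It is defined this way in the standard references, such as Whitehead, so the identity reduces to naturality. Any sign ambiguity from the suspension isomorphisms is irrelevant because the coefficients are $\Fbb_2$.
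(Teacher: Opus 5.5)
Your proposal is correct and is essentially the paper's own proof. The paper computes $\langle \alpha, z\rangle = \langle \alpha, (\sigma_*)^2 x\rangle = \langle (\sigma^*)^2\alpha, x\rangle = \langle \chi, x\rangle = 1$ in one line; it uses the Kronecker-pairing adjointness of $\sigma^*$ and $\sigma_*$ implicitly, whereas you justify it explicitly through the common evaluation map $\Sigma\Omega W \to W$.
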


\begin{proof}
    We have by the Kronecker pairing that
    \[\langle \alpha, z \rangle = \langle \alpha, (\sigma_*)^2 x \rangle = \langle (\sigma^*)^2 \alpha, x \rangle = \langle \chi, x \rangle = 1,\]
    where the last equality follows from Definition~\ref{def::chi_x}.
\end{proof}

\section{The Mechanism of Failure}\label{sec::failure}

Now we will derive a contradiction by showing simultaneously that:
\begin{enumerate}
    \item $\alpha^2 \neq 0 \in H^6(Y; \mathbb{F}_2)$, in Theorem~\ref{thm::not_zero}.
    \item $\alpha^2 = 0 \in H^6(Y; \mathbb{F}_2)$ in Theorem~\ref{thm::alpha_2_zero}.
\end{enumerate}

\subsection{Non-Vanishing Case}

In this section, we will prove the following.
\begin{theorem}\label{thm::not_zero}
    The class $\alpha^2 \in H^6(Y; \mathbb{F}_2)$ is not zero.
\end{theorem}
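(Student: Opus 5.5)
The plan is to detect $\alpha^2$ by pairing it with a degree-$6$ homology class in $Y$. That class should be the ``divided square'' of $z$, manufactured through the bar spectral sequence of $Z \to \ast \to Y$. Since $\alpha^2 = \operatorname{Sq}^3\alpha$, and Steenrod squares commute with $\sigma^*$, the obvious route of looping $\alpha^2$ back down to $X$ fails: $(\sigma^*)^2(\alpha^2) = \chi^2 \cdot 0$-type classes vanish, because cup products and top squares die under suspension. So the detection has to happen genuinely on $Y$, using the coalgebra structure of $H_*(Y;\Fbb_2)$. Concretely, I aim to produce $w \in H_6(Y;\Fbb_2)$ whose reduced coproduct contains $z \otimes z$, so that $\langle \alpha^2, w\rangle = \langle \alpha\otimes\alpha, \Delta w\rangle = \langle\alpha,z\rangle^2 = 1$ by Corollary~\ref{cor::alpha_not_zero}.

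\textbf{Step 1: homology of $X$ in low degrees.} By the Thom isomorphism (Theorem~\ref{thm::thom_iso}), $H_*(X;\Fbb_2) \cong H_*(H\Zbb/4;\Fbb_2)$. In degrees $\le 4$ the latter is $\Fbb_2[\bar\xi_1^2,\bar\xi_2,\dots]\otimes E(\bar\tau_0',\bar\tau_1,\dots)$, with $\bar\tau_0'$ of degree $1$ corresponding to $x$. As in the previous section, I will read off from this the Pontryagin algebra structure of $H_*(X;\Fbb_2)$ in degrees $\le 4$, using that the $\mathbb{E}_2$-Thom equivalence $H\Fbb_2\otimes Mf\simeq H\Fbb_2\otimes\Sigma^\infty_+X$ is one of $\mathbb{E}_2$-$H\Fbb_2$-algebras. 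I expect $x^2 = Q^1 x$ to vanish, since the exterior generator has $\bar\tau_0'^2=0$. I will also identify the top operation $Q^2 x \in H_3(X;\Fbb_2)$ given by Theorem~\ref{thm::DL_operations}.

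\textbf{Step 2: pass to $Z$ and $Y$.} By Theorem~\ref{thm::DL_operations}, the $Q^s$ commute with homology suspension. Combined with $Q^{|u|}u=u^2$, this gives
\[u^2 = Q^2 u = Q^2\sigma_* x = \sigma_*(Q^2 x) \in H_4(Z;\Fbb_2).\]
The suspension $\sigma_*$ kills decomposables. So if $Q^2x$ lies in the decomposables $x\cdot H_2(X)$, then $u^2=0$. In that case, in the bar spectral sequence $\operatorname{Tor}^{H_*(Z;\Fbb_2)}(\Fbb_2,\Fbb_2)\Rightarrow H_*(Y;\Fbb_2)$, the exterior-type class $u$ contributes a divided power algebra on $z=[u]$. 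In particular it contributes the class $\gamma_2(z)=[u|u]$ in total degree $6$, whose coproduct contains $z\otimes z$.

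\textbf{Step 3: survival.} I would then check that $\gamma_2(z)$ is not hit by differentials. Here I use that $Y$ is $2$-connected and that $H_*(Z)$ in degrees $\le 5$ is controlled by Step 1 through the analogous spectral sequence for $X\to\ast\to Z$. The main obstacle is the Dyer--Lashof computation in Step 1: deciding whether $Q^2x$ is decomposable in $H_3(X;\Fbb_2)\cong H_3(H\Zbb/4;\Fbb_2)$. This is where the genuine $\mathbb{E}_2$-structure of $f$ enters, and it contrasts with the $H\Fbb_2$ case, where $Q^1$ of the degree-$1$ class is an indecomposable generator. I would first attempt this by comparing with the known $\mathbb{E}_\infty$ Dyer--Lashof action on $H_*(H\Zbb;\Fbb_2)$ and $H_*(H\Fbb_2;\Fbb_2)$ along $H\Zbb\to H\Zbb/4\to H\Fbb_2$.
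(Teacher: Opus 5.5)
Your architecture matches the paper's. You show $u^2=0$ via $u^2=Q^2u=\sigma_*(Q^2x)$, use a bar-construction class $[u|u]$ whose coproduct contains $z\otimes z$, and pair it with $\alpha\otimes\alpha$. The genuine gap is that you never establish the step that carries the content: that $Q^2x$ is decomposable. You label it the main obstacle and leave it open, and without it there is no $[u|u]$. This is exactly where $H\Zbb/4$ must differ from $H\Fbb_2$, where $Q^2\bar\xi_1=\bar\xi_2$ is indecomposable (cf. Remark~\ref{rem::explain}).

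The paper closes this by quoting BMMS Corollary III.2.8, that $Q^2x=0$ in $H_*(H\Zbb/4;\Fbb_2)$. It transports this to $X$ through the $\mathbb{E}_2$ Thom equivalence and the compatibility in Theorem~\ref{thm::DL_operations}.

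Your comparison idea would also close it, but only with the correct additive structure. At $p=2$, $H_*(H\Zbb/4;\Fbb_2)\cong\Fbb_2[\bar\xi_1^2,\bar\xi_2,\dots]\otimes E(x)$, with no separate exterior generators $\bar\tau_1,\dots$. Your listed $\bar\tau_1$ would be a spurious indecomposable in degree $3$, which makes the question look undecidable. With the correct description, $H_3$ is spanned by $\bar\xi_2$ and $x\bar\xi_1^2$. The $\mathbb{E}_\infty$ map $H\Zbb/4\to H\Fbb_2$ kills $x$, because $\operatorname{Sq}^1$ vanishes on the mod $2$ reduction of the mod $4$ fundamental class. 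Hence it kills $Q^2x$, while it sends $\bar\xi_2$ to $\bar\xi_2\neq 0$. So $Q^2x\in\Fbb_2\{x\bar\xi_1^2\}$, which is decomposable and therefore killed by $\sigma_*$. This weaker conclusion is all you need.

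Your Step 3 is not a real obstacle, but the paper avoids it more cleanly. Lemma~\ref{lem::nonvanishing} writes down the chain-level cycle $[V|V]+[H]$, with $\partial H=V^2$, in the normalized bar construction on $C_*(Z;\Fbb_2)$. It reads off the term $[V]\otimes[V]$ in its deconcatenation coproduct. The computation $\langle\alpha^2,c\rangle=\langle\alpha,z\rangle^2=1$ then itself shows the class is nonzero, so no survival argument or spectral-sequence bookkeeping is needed.

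In your spectral-sequence version, survival is automatic for connectivity reasons: nothing can hit $E_{2,4}$ because $\widetilde H_*(Z;\Fbb_2)$ starts in degree $2$. You would still need two further inputs to pass from the $E^\infty$ coproduct to $\Delta w$ in $H_*(Y;\Fbb_2)$:
\begin{itemize}
\item the coalgebra structure on the bar spectral sequence (Rothenberg--Steenrod);
\item the observation that $H_3(Y;\Fbb_2)$ lies entirely in filtration $1$.
\end{itemize}
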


To do this, we will prove two technical lemmas that will directly imply the theorem. We first make the following observation.
\begin{lemma}\label{lem::u2_not_zero}
  The class $u \in H_2(Z; \Fbb_2)$ satisfies $u^2 = 0$. Here, the product is taken with respect to the Pontryagin product.
\end{lemma}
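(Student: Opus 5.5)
The plan is to rewrite the Pontryagin square $u^2$ as a Dyer--Lashof operation, move that operation back to $X$ along the homological suspension, and compute it there using the $\mathbb{E}_2$-Thom isomorphism.

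\textbf{Step 1: from $u^2$ to $\sigma_*(Q^2x)$.} Since $Z \simeq \Omega Y$ is a loop space, $H_*(Z;\Fbb_2)$ is the homotopy of an $\mathbb{E}_1$-$H\Fbb_2$-algebra. Theorem~\ref{thm::DL_operations} then gives $u^2 = Q^{|u|}(u) = Q^2(u)$. Next, $X \simeq \Omega Z \simeq \Omega^2 Y$, so $H_*(X;\Fbb_2)$ carries $\mathbb{E}_2$ operations $Q^s$ for $1 \le s \le 2$ on the degree-one class $x$. The same theorem says the $Q^s$ commute with homological suspension. Since $u = \sigma_* x$, this gives
\[u^2 = Q^2(\sigma_* x) = \sigma_*\bigl(Q^2 x\bigr),\]
where $Q^2x \in H_3(X;\Fbb_2)$ is the top ($\mathbb{E}_2$) operation on $x$. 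It therefore suffices to show that $\sigma_*(Q^2 x) = 0$ in $H_4(Z;\Fbb_2)$.

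\textbf{Step 2: compute $Q^2x$ via the Thom isomorphism.} Apply Theorem~\ref{thm::thom_iso} to the $\mathbb{E}_2$-map $Mf \simeq H\Zbb/4 \to H\Fbb_2$. This gives an equivalence of $\mathbb{E}_2$-$H\Fbb_2$-algebras $H\Fbb_2 \otimes H\Zbb/4 \simeq H\Fbb_2 \otimes \Sigma^\infty_+ X$. Hence $H_*(X;\Fbb_2) \cong H_*(H\Zbb/4;\Fbb_2)$ as rings, and this isomorphism respects $Q^1$ and $Q^2$. Inside $H_*(H\Zbb/4;\Fbb_2)$, I will use the standard description: a polynomial algebra on the classes pulled back from $H_*(H\Zbb;\Fbb_2)$, tensored with an exterior class in degree $1$. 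Under this identification $x$ is the exterior generator, so $x^2 = Q^1x = 0$. I will then determine $Q^2 x$ by naturality along the $\mathbb{E}_\infty$-map $H\Zbb/4 \to H\Fbb_2$, which sends $x \mapsto \xi_1$. Steinberger's formula $Q^2\xi_1 = \bar\xi_2$ then determines $Q^2x$ up to elements of the kernel, namely products of $x$ with degree-two classes. The aim is to express $Q^2x$ as a sum of decomposable classes plus classes in the image of the loop structure from $\Omega^2 Y$.

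\textbf{Step 3: conclude.} The homological suspension $\sigma_*: \widetilde H_*(X) \to \widetilde H_{*+1}(Z)$ annihilates Pontryagin decomposables. Combined with Step 2, this forces $\sigma_*(Q^2x) = 0$ and hence $u^2 = 0$.

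\textbf{Main obstacle.} I expect Step 2 to be the hard part. The top operation $Q^2$ on an $\mathbb{E}_2$-algebra is not additive; it is entangled with the Browder bracket. Moreover, naturality to $H\Fbb_2$ only pins down $Q^2x$ modulo $\ker\bigl(H_3(H\Zbb/4)\to H_3(H\Fbb_2)\bigr)$, and an indecomposable $\bar\xi_2$-type component need not be killed by $\sigma_*$. If this route stalls, my first fallback is to test the claim dually: use $\langle \alpha\cdot\alpha', -\rangle$ and the identity $\sigma^*(\operatorname{Sq}^{k}\beta) = \operatorname{Sq}^{k}\sigma^*\beta$, which shows that squares of suspended cohomology classes pair trivially with suspended homology. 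This should at least decide which of $u^2 = 0$ or $u^2 \neq 0$ is the correct statement before committing to the Dyer--Lashof computation.
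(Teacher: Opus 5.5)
\textbf{Gap in Step 2.} Your Step 1 and your use of the $\mathbb{E}_2$-Thom isomorphism to move to $H_*(H\Zbb/4;\Fbb_2)$ match the paper. Step 2, however, rests on a false identification and never determines $Q^2x$, so the proposal does not yet prove the lemma.

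The reduction $r\colon H\Zbb/4\to H\Fbb_2$ does \emph{not} send $x$ to $\xi_1$. In the cofiber sequence $H\Fbb_2\to H\Zbb/4\xrightarrow{r}H\Fbb_2\xrightarrow{\operatorname{Sq}^1}\Sigma H\Fbb_2$ one has $\operatorname{Sq}^1\circ r=0$. Hence $\langle \operatorname{Sq}^1, r_*x\rangle=0$, and so $r_*(x)=0$. Your own description already forces this: $r_*$ is multiplicative and $x^2=0$, whereas $\xi_1^2\neq 0$ in $H_*(H\Fbb_2;\Fbb_2)$.

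With your identification $x\mapsto\xi_1$, naturality would give $Q^2x$ a nonzero $\bar\xi_2$-component. That class is indecomposable, so Step 3 would break. This is exactly the obstacle you flag and then leave open. Your fallback does not resolve it either. Cup products on $H^*(Z;\Fbb_2)$ are dual to the diagonal, whereas $u^2$ is detected by the loop-multiplication coproduct on $H^*(Z;\Fbb_2)$, so cup squares of suspended classes say nothing about $u^2$.

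The paper closes this step by quoting Steinberger's computation $Q^2x=0$ in $H_*(H\Zbb/4;\Fbb_2)$ for the $\mathbb{E}_\infty$-structure (Corollary III.2.8 of \cite{h_infty_book}). It then uses that the $\mathbb{E}_2$-operations are the restricted ones.

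\textbf{How to repair it.} Once corrected, your naturality idea suffices, and it is slightly more self-contained than the citation.
\begin{enumerate}
\item Since $r_*(x)=0$, naturality gives $r_*(Q^2x)=Q^2(r_*x)=0$.
\item As rings, $H_*(H\Zbb/4;\Fbb_2)\cong H_*(H\Zbb;\Fbb_2)\otimes E(x)$ with $E(x)$ exterior. The map $r_*$ is injective on the first factor, so its kernel in degree $3$ is exactly $H_2(H\Zbb;\Fbb_2)\cdot x$.
\item Hence $Q^2x$ is a product of positive-degree classes. Transport this along the ring isomorphism $H_*(H\Zbb/4;\Fbb_2)\cong H_*(X;\Fbb_2)$, which commutes with $Q^2$ because the equivalence is $\mathbb{E}_2$.
\item Since $\sigma_*$ annihilates Pontryagin decomposables, $u^2=\sigma_*(Q^2x)=0$.
\end{enumerate}
This route needs only decomposability of $Q^2x$, not its vanishing. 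The full vanishing does hold: the Nishida relation gives $\operatorname{Sq}^2_*Q^2x=Q^1\operatorname{Sq}^1_*x=0$, while $\operatorname{Sq}^2_*(\bar\xi_1^2x)=x$.
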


\begin{proof}
    By Theorem~\ref{thm::DL_operations}, $u^2$ coincides with the Dyer-Lashof operations $Q^2(u) = Q^2(\sigma_*(x)) = \sigma_*(Q^2(x))$. On the other hand, since $f$ is $\mathbb{E}_2$, we know there is an $\mathbb{E}_2$-equivalence $H\Fbb_2 \otimes H\Zbb/4 \simeq H\Fbb_2 \otimes \Sigma_+^{\infty} X$, so $H_*(X; \mathbb{F}_2)$ has the same Dyer-Lashof operations up to $Q^2$ as $H_*(H\Zbb/4; \Fbb_2)$. The latter has been computed in Corollary III.2.8 of \cite{h_infty_book} and has $Q^2(x) = 0$. This was done with respect to the $\mathbb{E}_{\infty}$-structure on $H\Zbb/4$ in \cite{h_infty_book}, but the conclusion applies due to Theorem~\ref{thm::DL_operations}.
\end{proof}

To prove Theorem~\ref{thm::not_zero}, we will employ the following lemma which is certainly known to experts. Similar statements have appeared as Theorem 2 of \cite{DrachmanKraines1971} in the language of differential graded algebras.
\begin{lemma}\label{lem::nonvanishing}
    Let $W$ be a path-connected well-based topological monoid. Suppose $v \in H_q(W; \Fbb_2)$ is a homology class such that $v^2 = 0$, and $a 
    \in H^{q+1}(BW; \Fbb_2)$ is a cohomology class such that $\langle a, \sigma_* v \rangle = 1$, then the cup product $a^2 \neq 0$.
\end{lemma}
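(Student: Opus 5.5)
We prove the lemma with the bar spectral sequence (Rothenberg--Steenrod/Eilenberg--Moore) for the classifying space, together with the classical description of products in $H_*(BW)$ via the bar construction. The idea is that $v^2=0$ lets us build a ``divided square'' of $\sigma_* v$ one filtration higher. Since $W$ is a well-based monoid, $BW$ is the geometric realization of the simplicial bar construction, so $H_*(BW;\Fbb_2)$ is computed by a spectral sequence
\[E^2_{s,t} = \operatorname{Tor}^{H_*(W;\Fbb_2)}_{s,t}(\Fbb_2,\Fbb_2) \Rightarrow H_{s+t}(BW;\Fbb_2).\]
It is a spectral sequence of coalgebras, and dually the cohomology version is one of algebras. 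The homological suspension $\sigma_* v$ is represented in filtration $1$ by the bar element $[v]$, which lies in $E^2_{1,q}$.

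\textbf{Step 1: build the dual class.} Because $v\cdot v=0$ in $H_*(W)$, the bar element $\gamma_2(v) := [v\,|\,v]$ is a cycle in the normalized bar complex; its bar differential is $v\cdot v = 0$. It therefore gives a class in $E^2_{2,2q}$. The comultiplication in Tor sends
\[[v|v] \mapsto [v|v]\otimes 1 + [v]\otimes[v] + 1\otimes[v|v].\]
Dually, on the cohomology side, the cup square of the class detected by $[v]^\vee$ is detected by $([v|v])^\vee$. This is the Drachman--Kraines mechanism: the shuffle product of $[a]$ with itself is $2[a|a]=0$ only in the algebra of chains, while cohomologically the cup product is dual to the deconcatenation coproduct. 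Consequently $a^2$ evaluates to $1$ on any homology class represented by $[v|v]$.

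\textbf{Step 2: show $[v|v]$ survives.} This is the step I expect to be the main obstacle. One has to check that $[v|v]$ survives to $E^\infty$ and lifts to an honest class $w\in H_{2q+2}(BW;\Fbb_2)$. One also has to check that the evaluation $\langle a^2, w\rangle$ really equals $\langle a,\sigma_*v\rangle^2=1$, with no correction terms from lower filtration.

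My first attempt avoids the spectral sequence altogether and works at chain level. Choose a chain $c$ in $C_*(W)$ representing $v$, and a chain $b$ with $\partial b = c\cdot c$. Form the realization chain
\[w = [c\,|\,c] + [b]\]
in the bar construction $B(C_*(W))\simeq C_*(BW)$. It is a cycle, since $\partial [c|c] = [c\cdot c]$ (up to sign, which is irrelevant mod $2$) and this cancels $\partial[b]$.

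Next, compute $\langle a\smile a, w\rangle$ using an Alexander--Whitney diagonal on the bar construction. The diagonal sends $[c|c]\mapsto [c]\otimes[c] + (\text{terms with a factor } [\,])$ and sends $[b]\mapsto [b]\otimes[\,]+[\,]\otimes[b]$. Since $a$ is reduced and $\langle a,[c]\rangle = \langle a,\sigma_*v\rangle = 1$, this gives $\langle a^2,w\rangle = 1\cdot 1 = 1$. The delicate point is to justify that the diagonal on $B(C_*W)$ compatible with the one on $C_*(BW)$ has the deconcatenation form up to chain homotopy. For a topological monoid this holds because $C_*(W)$ can be taken to be a DGA with a compatible (homotopy) coalgebra structure, as in Drachman--Kraines, whose Theorem 2 we would cite for exactly this.

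With either route the conclusion is $\langle a^2, w\rangle = 1$, so $a^2\neq 0$.
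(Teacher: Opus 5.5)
Your chain-level route is essentially the paper's proof: the paper likewise takes the cycle $[V|V]+[H]$ (with $\partial H = V^2$) in the normalized bar construction on $C_*(W;\Fbb_2)$, uses that the homology of this bar construction is isomorphic to $H_*(BW;\Fbb_2)$ as a coalgebra with $[V]$ corresponding to $\sigma_* v$, and evaluates $a^2$ through the deconcatenation coproduct to get $\langle a,\sigma_* v\rangle^2=1$. The paper simply asserts the coalgebra compatibility you single out as the delicate point, and your spectral-sequence Steps 1--2 are an unneeded detour that the chain-level argument supersedes.
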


\begin{proof}
Write $A = C_*(W; \mathbb{F}_2)$ to be equipped with the Pontryagin product and $BA$ be the normalized bar construction from $A$. This has the property that there is an isomorphism of coalgebras
\[\Phi: H_*(BA) \xrightarrow{\cong} H_*(BW; \Fbb_2).\]
Let $V \in C_{q}(W; \Fbb_2)$ be a representative of the homology class $v$. Since $v^2 = 0 \in H_{2q}(W; \Fbb_2)$, we can find an element $H \in C_{2q+1}(W; \Fbb_2)$ such that $\partial H = V^2$. Now consider the word
\[\gamma \coloneqq [V|V] + [H] \in (BA)_{2q+2}.\]
Let $D$ be the differential on $BA$, on an arbitrary word, we have
\[D[A_1| ... | A_s] = \sum_{i=1}^s [A_1|...|\partial A_i|...|A_s] + \sum_{i=1}^{s-1} [A_1|...|A_i A_{i+1}| ... A_s].\]
When we apply to the word $\gamma$, this has that
\[D \gamma = [\partial V|V]+[V|\partial V] + [V^2] + [\partial H] = [0|V] + [V|0] + [V^2] + [\partial H] = 2[V^2] = 0,\]
since we work in characteristic $2$.\\

This shows that $\gamma$ gives a homology class $[\gamma] \in H_*(BA)$. Now let $c \coloneqq \Phi([\gamma]) \in H_*(BW; \Fbb_2)$. The map $\Phi$ preserves the coproduct, and we know that $\Delta \gamma = 1 \otimes \gamma + [V] \otimes [V] + \gamma \otimes 1$. Thus, we have that
\[\Delta_* c = 1 \otimes c + (\sigma_* v) \otimes (\sigma_* v)  + c \otimes 1,\]
where we note that $\Phi([V]) = \sigma_*v$ as the edge homomorphism in the bar spectral sequence is homological suspension (for example, see Proposition 7.2 of \cite{moore}). Now we have that
\begin{align*}
    \langle a^2, c \rangle &= \langle \Delta^*(a \times a), c \rangle\\
    &= \langle a \times a, \Delta_* c \rangle\\
    &= \langle a \times a, 1 \times c + (\sigma_* v) \times (\sigma_* v) + c \times 1 \rangle\\
    &= \langle a \times a, 1 \times c \rangle + \langle a \times a, c \times 1\rangle + \langle a, \sigma_* v \rangle ^2\\
    &= \langle a, \sigma_* v \rangle ^2\\
    &= 1.
\end{align*}
Here $a \times a$ evaluates to $0$ on $1 \times c$ and $c \times 1$, due to bidegree reasons on the Künneth decomposition for $H_{2q+2}(BW \times BW; \Fbb_2)$. Thus, the class $a^2$ is non-zero.
\end{proof}

The proof of Theorem~\ref{thm::not_zero} now follows from the two previous lemmas.
\begin{proof}[Proof of Theorem~\ref{thm::not_zero}]
    The result follows from Lemma~\ref{lem::nonvanishing} by choosing $W \coloneqq Z$, which can be made into a topological monoid using Moore's loop space model, $v \coloneqq u$, and $a \coloneqq \alpha$, and using Lemma~\ref{lem::u2_not_zero} and Corollary~\ref{cor::alpha_not_zero}.
\end{proof}

\subsection{Vanishing Case}

In this section, we will prove the following.
\begin{theorem}\label{thm::alpha_2_zero}
    The class $\alpha^2 \in H^6(Y; \Fbb_2)$ is $0$.
\end{theorem}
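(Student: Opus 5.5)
The plan is to compute $\alpha^2$ as a Steenrod square and then kill it using the structure of the $2$-local units. Since $\alpha \in H^3(Y;\Fbb_2)$, the unstable condition gives $\alpha^2 = \operatorname{Sq}^3\alpha = \operatorname{Sq}^1\operatorname{Sq}^2\alpha$. So it suffices to prove $\operatorname{Sq}^2\alpha = 0$ in $H^5(Y;\Fbb_2)$. By Proposition~\ref{prop::reduction}, $\alpha = (r_2)_*(\Tilde{\alpha})$, and by Definition~\ref{def::alpha_tilde}, $\Tilde{\alpha}$ factors through the lift $g^+: Y \to T^+$ of Lemma~\ref{lem::y_t_plus}. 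Hence $\operatorname{Sq}^2\alpha = (g^+)^*\bigl(\operatorname{Sq}^2 r_2(\iota)\bigr)$, where $\iota \in H^3(T^+;\Zbb_2)$ is pulled back from $K(\Zbb_2,3)$. It is therefore enough to show that the universal class $\operatorname{Sq}^2 r_2(\iota)$ vanishes already on $T^+ = \Omega^\infty\Sigma^3 E$.

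To see this vanishing, I would use the Postnikov tower of the connective spectrum $E$. We have $\pi_0 E \cong \Zbb_2$ and $\pi_1 E \cong \pi_1\mathfrak{gl}_1(\Sbb^\wedge_2)\cong \Zbb/2\{\eta\}$. The first $k$-invariant is a stable map $H\Zbb_2 \to \Sigma^2 H\Zbb/2$. Such maps form the group $H^2(H\Zbb_2;\Fbb_2) \cong \Fbb_2\{\operatorname{Sq}^2 r_2\}$, so the $k$-invariant is either $0$ or $\operatorname{Sq}^2 r_2$. Delooping three times, the fibration $K(\Zbb/2,4)\to \tau_{\le 4}T^+ \to K(\Zbb_2,3)$ is classified by $\Omega^\infty\Sigma^3$ of this $k$-invariant. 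If the $k$-invariant is $\operatorname{Sq}^2 r_2$, then $\operatorname{Sq}^2 r_2(\iota)$ pulls back to zero on $\tau_{\le 4}T^+$, and hence on $T^+$, by exactness of the fiber sequence. That gives $\operatorname{Sq}^2\alpha=0$ and therefore $\alpha^2=0$. So the proof reduces to showing that the first $k$-invariant of $E$ is nonzero, i.e. that $\tau_{\le 1}E$ does not split as $H\Zbb_2\vee\Sigma H\Zbb/2$.

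The main obstacle is proving this non-splitting. Proposition~\ref{prop::eta_formula} alone does not settle it, since $\eta$ acting through additive data on $\pi_0$ only sees $5-1=4$, which kills $\eta$. My first attempt would use the transfer machinery of Definition~\ref{def::transfer}, evaluated on $\mathbb{R}P^2 \hookrightarrow BC_2$. A splitting of $\tau_{\le1}E$ would force the multiplicative transfer $p_\otimes(5)\in (\Sbb^\wedge_2)^0(BC_2)$, restricted to $\mathbb{R}P^2$, to agree with a formula determined by $\pi_0$ alone. That formula is $1+\tfrac{5^2-5}{2}y\cdot(\text{unit})$, i.e. the additive expression involving $10y$. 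The non-split alternative differs from it by a class detected on the $\eta$-cell of $\mathbb{R}P^2$. Pushing to $\operatorname{KO}$ via the unit $\varphi$, I would compare the two answers, where $\varphi_*p_\otimes(5)$ is computed geometrically as in Lemma~\ref{lem::y_calc}. By Corollary~\ref{cor::10y_not_zero}, $\varphi_*(i^*(10y))\neq 0$, while the split hypothesis would force this discrepancy to vanish. Making the identification between the $k$-invariant and this transfer discrepancy precise is the step I expect to require the most care. In particular, one has to track which $\pi_0$-units see $\eta$ through the $\mathbb{E}_\infty$ power operation on $\mathfrak{gl}_1$.
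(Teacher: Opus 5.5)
\textbf{There is a genuine gap: your reduction is to a false statement.} The first $k$-invariant of $E$ is zero, not $\operatorname{Sq}^2 r_2$. This is Lemma~\ref{lem::k1_zero} of the paper.

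By naturality along $E\to\mathfrak{gl}_1(\Sbb^\wedge_2)$, which is an isomorphism on $\pi_1$, $\kappa_1^E$ is nonzero exactly when $\eta\cdot[5]\neq 0$ in $\pi_1\mathfrak{gl}_1(\Sbb^\wedge_2)$. You can compute this with the tool you propose. The transfer restricted to $\Sigma^\infty_+\mathbb{R}P^1\simeq\Sbb\vee\Sbb^1$ is $2+\eta$. Hence $P_2(5)=5+\binom{5}{2}t$ restricts on $\mathbb{R}P^1$ to $25+10\eta=25$, so $\eta\cdot[5]=0$. Equivalently, the first $k$-invariant of $\mathfrak{gl}_1(\Sbb^\wedge_2)$ is $\operatorname{Sq}^2\circ\operatorname{sgn}$, which vanishes on $1+4\Zbb_2$.

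Consequences:
\begin{itemize}
\item $\tau_{\le 1}E\simeq H\Zbb_2\vee\Sigma H\Fbb_2$, and so $P_4T^+\simeq K(\Zbb_2,3)\times K(\Fbb_2,4)$.
\item Since $T^+\to P_4T^+$ is $5$-connected, the class $\operatorname{Sq}^2 r_2(\iota)$ is \emph{nonzero} in $H^5(T^+;\Fbb_2)$. No universal argument on $T^+$ can give $\operatorname{Sq}^2\alpha=0$. Writing $\alpha^2=\operatorname{Sq}^1\operatorname{Sq}^2\alpha$ and aiming at $\operatorname{Sq}^2\alpha=0$ asks for too much.
\item Your own transfer heuristic points the same way. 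Since $w_1(10L)=10a=0$, the class $\varphi_*i^*(10y)$ vanishes on $\mathbb{R}P^1$ and lives on the $2$-cell, so it cannot detect $\kappa_1$.
\item A splitting of $\tau_{\le 1}E$ alone does not force the transfer on $\mathbb{R}P^2$ to be constant. That would need a section $H\Zbb_2\to\tau_{\le 2}E$.
\end{itemize}

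What the $10y$ computation actually detects is the \emph{next} $k$-invariant, and this is the missing step. With $\tau_{\le1}E$ split, write $\kappa_2=a_1\operatorname{Sq}^3 r_2+a_2\operatorname{Sq}^2$ on $H\Zbb_2\vee\Sigma H\Fbb_2$.
\begin{itemize}
\item $a_2=0$, since $\eta\cdot\psi_1(\eta)=\psi_2(2\eta^2)=0$ by Proposition~\ref{prop::eta_formula}.
\item $a_1=1$. If $a_1=0$, there would be a section $H\Zbb_2\to\tau_{\le 2}E$. Then the transfer of $5$, restricted to $\mathbb{R}P^2$, would be the constant $25$. This contradicts $P_2(5)=25+10y$ together with Corollary~\ref{cor::10y_not_zero}.
\end{itemize}
Hence the fifth $k$-invariant of $T^+$ is $\operatorname{Sq}^3 r_2$ on the $K(\Zbb_2,3)$ factor. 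Pulling it back along $(\tilde\alpha,\beta)\colon Y\to P_4T^+$ gives $\operatorname{Sq}^3\alpha=\alpha^2=0$ directly. The class that gets killed is $\operatorname{Sq}^3 r_2(\iota)=\operatorname{Sq}^1\operatorname{Sq}^2 r_2(\iota)$, never $\operatorname{Sq}^2 r_2(\iota)$ itself.
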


The proof of Theorem~\ref{thm::alpha_2_zero} will follow from $k$-invariant computations on the space $Y$. For convenience, we will do our $k$-invariant computations on the spectrum $E$ (Definition~\ref{def::E}), and the results will carry over to $Y$.\\

We first make the following observation.
\begin{lemma}\label{lem::E_info}
    The pullback map $E \to \mathfrak{gl}_1(\mathbb{S}^{\wedge}_2)$ is an isomorphism on $\pi_{\geq 1}$. The first three homotopy groups of $E$ are
    \[\pi_0(E) = \Zbb_2, \pi_1(E) = \Zbb/2\{\psi_1(\eta)\}, \pi_2(E) = \Zbb/2,\]
    where $\eta$ corresponds to the stable Hopf map and $\psi_1$ is from Proposition~\ref{prop::eta_formula}. Furthermore, the map $\pi_0(E) \cong \Zbb_2 \to \pi_0(H\Zbb_2^{\times})$ in the pullback square is given by $n \mapsto 5^n$.
\end{lemma}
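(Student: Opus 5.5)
The plan is to read everything off the long exact sequence of homotopy groups of the pullback square defining $E$ in Definition~\ref{def::E}. Since this is a homotopy pullback of spectra, it is also a homotopy pushout, so there is a fiber sequence
\[E \to H\Zbb_2 \oplus \mathfrak{gl}_1(\mathbb{S}^{\wedge}_2) \to H\Zbb_2^{\times},\]
where the second map is the difference of the two structure maps. This gives a Mayer--Vietoris long exact sequence
\[\cdots \to \pi_{i+1}(H\Zbb_2^\times) \to \pi_i(E) \to \pi_i(H\Zbb_2) \oplus \pi_i(\mathfrak{gl}_1(\mathbb{S}^{\wedge}_2)) \to \pi_i(H\Zbb_2^\times) \to \cdots.\]

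First I treat degrees $i \geq 1$. Here $\pi_i(H\Zbb_2) = \pi_i(H\Zbb_2^\times) = 0$, and $\pi_{i+1}(H\Zbb_2^{\times}) = 0$ as well. The sequence therefore gives $\pi_i(E) \cong \pi_i(\mathfrak{gl}_1(\mathbb{S}^{\wedge}_2))$, and this isomorphism is induced by the projection $E \to \mathfrak{gl}_1(\mathbb{S}^{\wedge}_2)$. That proves the first claim.

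Next I treat degree $0$. The relevant part of the sequence is
\[0 \to \pi_0(E) \to \Zbb_2 \oplus \Zbb_2^\times \to \Zbb_2^\times \to \pi_{-1}(E).\]
The first term is $0$ because $\pi_1(H\Zbb_2^\times)=0$. The map $\Zbb_2^\times \to \Zbb_2^\times$ coming from the $0$th Postnikov truncation is the identity on $\pi_0$. Hence the middle map $\Zbb_2 \oplus \Zbb_2^\times \to \Zbb_2^\times$ is surjective, which gives $\pi_{-1}(E)=0$, so $E$ is connective. Its kernel is the graph $\{(n, 5^n)\}$, which is isomorphic to $\Zbb_2$ via projection to the first factor. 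So $\pi_0(E) \cong \Zbb_2$. The composite $\pi_0(E) \to \pi_0(\mathfrak{gl}_1) = \Zbb_2^\times \to \pi_0(H\Zbb_2^\times)$ agrees with $\pi_0(E) \to \Zbb_2 \xrightarrow{\lambda} \Zbb_2^\times$, and so is $n \mapsto 5^n$, as claimed.

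For the low homotopy groups I use $\pi_i(\mathfrak{gl}_1(\mathbb{S}^{\wedge}_2)) \cong \pi_i(\mathbb{S}^{\wedge}_2)$ for $i \geq 1$, realized by the isomorphism $\psi_i$ of Proposition~\ref{prop::eta_formula}. Because the stable stems in degrees $1$ and $2$ are finite $2$-groups, $2$-completion does not change them. Thus $\pi_1(\mathbb{S}^{\wedge}_2) = \Zbb/2\{\eta\}$ and $\pi_2(\mathbb{S}^{\wedge}_2) = \Zbb/2\{\eta^2\}$. Combined with the first step, this gives $\pi_1(E) = \Zbb/2\{\psi_1(\eta)\}$ and $\pi_2(E) \cong \Zbb/2$.

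The only point that needs care is that $\psi_i$ is a group isomorphism for $i\geq 1$ even though $\mathfrak{gl}_1$ and the sphere differ as spectra. This is standard, since $\operatorname{GL}_1(R)$ and $\Omega^\infty R$ have equivalent identity components after translating by $1$, and it is already recorded in the background section. No real obstacle is expected: the proof is a routine diagram chase once the pullback is converted to a fiber sequence.
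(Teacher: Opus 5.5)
Your proposal is correct and follows essentially the same approach as the paper, which also reads everything off the Mayer--Vietoris long exact sequence of the homotopy pullback together with the two maps into $H\Zbb_2^{\times}$ on $\pi_0$. You simply spell out the details the paper leaves implicit: that the kernel in degree $0$ is the graph $\{(n,5^n)\}$, that $E$ is connective, and that the $1$- and $2$-stems are unchanged by $2$-completion.
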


\begin{proof}
    This follows directly from the Mayer-Vietoris sequence for homotopy pullbacks and examining the two maps into $H\Zbb_2^{\times}$ for the pullback diagram on $\pi_0$. 
\end{proof}

The first Postnikov truncation of $E$ then fits in a sequence
\[\Sigma H\Fbb_2 \to \tau_{\leq 1} E \to H\Zbb_2 \xrightarrow{\kappa_1} \Sigma^2 H\Fbb_2.\]

\begin{lemma}\label{lem::k1_zero}
    The $k$-invariant $\kappa_1 \in H^2(H\Zbb_2; \Fbb_2)$ is $0$.
\end{lemma}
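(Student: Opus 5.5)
The plan is to reduce the vanishing of $\kappa_1$ to a single homotopy computation: the action of $\eta$ on the generator of $\pi_0(E)$.

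\textbf{Step 1: the only candidate.} First I would note that $H^2(H\Zbb_2;\Fbb_2)$ is at most one-dimensional. For connective $2$-complete $H\Zbb_2$ we have $H^*(H\Zbb_2;\Fbb_2)\cong \mathcal{A}/\mathcal{A}\operatorname{Sq}^1$, exactly as for $H\Zbb$. In degree $2$ this quotient is spanned by $\operatorname{Sq}^2$. So $\kappa_1$ is either $0$ or the composite $H\Zbb_2 \to H\Fbb_2 \xrightarrow{\operatorname{Sq}^2}\Sigma^2 H\Fbb_2$.

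\textbf{Step 2: detecting it by $\eta$.} The two cases are told apart by multiplication by $\eta$ from $\pi_0(\tau_{\leq 1}E)$ to $\pi_1(\tau_{\leq 1}E)$. The $k$-invariant is $\operatorname{Sq}^2$ exactly when $\eta$ times a generator of $\pi_0$ is nonzero in $\pi_1$, since $\operatorname{Sq}^2$ detects $\eta$ on the bottom cell. Hence it suffices to show that $\eta\cdot 1_E = 0$ in $\pi_1(E)$, where $1_E\in\pi_0(E)\cong\Zbb_2$ is the generator. By Lemma~\ref{lem::E_info}, $E\to\mathfrak{gl}_1(\Sbb^\wedge_2)$ is an isomorphism on $\pi_1$ and sends $1_E$ to the unit $5\in\Zbb_2^\times=\pi_0\mathfrak{gl}_1(\Sbb^\wedge_2)$. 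By naturality of the $\eta$-action, the claim becomes $\eta\cdot[5]=0$ in $\pi_1\mathfrak{gl}_1(\Sbb^\wedge_2)\cong\Zbb/2\{\psi_1(\eta)\}$.

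\textbf{Step 3: the main computation.} This is where I expect the real work. I would use the formula
\[\eta\cdot[u]=\psi_1\!\left(\tbinom{u}{2}\,\eta\right),\qquad u\in\pi_0(R)^\times,\]
which is the degree-zero analogue of Proposition~\ref{prop::eta_formula}. The plan for proving it goes through the multiplicative transfer $p^{R}_\otimes$ of Definition~\ref{def::transfer}.

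The $\eta$-action on $\pi_0\mathfrak{gl}_1$ is controlled by the bottom attaching map of $\Sigma^\infty_+BC_2$, i.e.\ by restricting along $\Rbb P^1\subset BC_2$. There the transfer $\operatorname{tr}$ restricts to $\eta$ on the top cell.

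For $R=\Sbb$, the multiplicative transfer (norm) of an integer $u$ is the $C_2$-set $\underline{u}\times\underline{u}$ with the swap action. This has $u$ fixed points and $\binom{u}{2}$ free orbits, so in $\widehat A(C_2)$ we get
\[p_\otimes(u)=u+\tbinom{u}{2}\,(2+y).\]
Restricting to the one-cell level, the $y$-coefficient $\binom u2$ is what multiplies $\eta$. I would sanity-check this against $u=-1$, where $\binom{-1}{2}=1$ and $\eta\cdot[-1]=\eta$ (the sign representation and the $J$-homomorphism), which is the known nontrivial case.

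For $u=5$ we get $\binom52=10$, so $\eta\cdot[5]=\psi_1(10\eta)=0$ because $2\eta=0$. The same formula shows why $5$ behaves differently from $-1$. The coefficient $10$ also matches the class $10y$ of Corollary~\ref{cor::10y_not_zero}, which I expect to enter later, at the $\eta^2$ level, via Proposition~\ref{prop::eta_formula}.

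\textbf{Conclusion.} Step 3 gives $\eta\cdot 1_E=0$, so by Step 2 the $k$-invariant is not $\operatorname{Sq}^2$, and by Step 1 we get $\kappa_1=0$. The hard part is justifying that the $\eta$-action on $\pi_0\mathfrak{gl}_1$ is computed by the multiplicative transfer restricted to the $1$-skeleton. If that identification is delicate, I would instead argue directly on $\mathfrak{gl}_1(\Sbb^\wedge_2)$: the $\eta$-action on $\pi_0$ is a homomorphism $\Zbb_2^\times\to\Zbb/2$ that kills $1+8\Zbb_2$. It then suffices to evaluate it on $5$, which amounts to the same norm computation.
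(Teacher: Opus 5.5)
Your proof is correct. It shares the paper's first move, naturality along $E \to \mathfrak{gl}_1(\Sbb^\wedge_2)$, which is an isomorphism on $\pi_1$ and sends $1 \mapsto 5$ on $\pi_0$. The key input, however, is obtained differently.

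\textbf{The paper's route.} It cites Kitchloo's computation $\kappa_1^{\mathfrak{gl}_1(\Sbb^\wedge_2)} = \operatorname{Sq}^2 \circ H(\operatorname{sgn})$ and observes that $\operatorname{sgn}\circ\lambda = 0$.

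\textbf{Your route.} You use that the first $k$-invariant is detected by $\eta\colon \pi_0 \to \pi_1$. You then compute $\eta\cdot[5]$ directly from the power operation $P_2(5) = 25 + 10y$, restricted to $\Rbb P^1$, where $y$ restricts to $\eta$. Here $P_2(5)$ is computed in $\Sbb$ and pushed forward to $\Sbb^\wedge_2$, where $5$ becomes a unit. Your formula $\eta\cdot[u] = \psi_1\bigl(\binom{u}{2}\eta\bigr)$ recovers exactly the part of Kitchloo's result that is needed, since $\binom{u}{2}$ is odd precisely when $u \equiv 3 \pmod 4$.

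The step you flag as delicate is the claim that the additive transfer on $\mathfrak{gl}_1$ corresponds to the multiplicative transfer on the ring. The paper already invokes this same compatibility when proving $a_1 = 1$, so your route needs no new machinery.

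\textbf{Comparison.} The paper's argument is shorter. Yours is self-contained and gives a unified picture: $\kappa_1^E = 0$ and $a_1 = 1$ are the $\Rbb P^1$- and $\Rbb P^2$-restrictions of the same class $10y$. That class dies on the $1$-skeleton because $10\eta = 0$, but survives on $\Rbb P^2$ by Corollary~\ref{cor::10y_not_zero}.
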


\begin{proof}
    We can apply the naturality of $k$-invariant (see Section 3 of \cite{Gurski_2017}, for example) to the pullback map $\psi: E \to \mathfrak{gl}_1(\mathbb{S}^{\wedge}_2)$ and an equality:
    \[(\Sigma^{2} H \pi_1 E \to \Sigma^2 H \pi_1 \mathfrak{gl}_1(\mathbb{S}^{\wedge}_2))\circ \kappa_1^{E} = \kappa_1^{\mathfrak{gl}_1(\mathbb{S}^{\wedge}_2)} \circ (\tau_{\leq 0} E \to \tau_{\leq 0} \mathfrak{gl}_1(\mathbb{S}^{\wedge}_2)).\]
    By Lemma~\ref{lem::E_info}, we know $\psi$ is an equivalence in $\pi_{\geq 1}$, so it suffices to show that $\kappa_1^{\mathfrak{gl}_1(\mathbb{S}^{\wedge}_2)} \circ (\tau_{\leq 0} E \to \tau_{\leq 0} \mathfrak{gl}_1(\mathbb{S}^{\wedge}_2))$ is $0$.\\

    Now \cite{Kitchloo2020} calculates $\kappa_1^{\mathfrak{gl}_1(\mathbb{S}^{\wedge}_2)}$ to be the map $\operatorname{Sq}^2 \circ H(\operatorname{sgn})$, where $\operatorname{sgn}: \Zbb_2^{\times} \to \Fbb_2$ is the projection onto its sign. The map $\tau_{\leq 0} E \to \tau_{\leq 0} \mathfrak{gl}_1(\mathbb{S}^{\wedge}_2)$ is exactly the inclusion $\lambda: \Zbb_2 \to \Zbb_2^{\times}$ by $n \mapsto 5^n$. In particular, this means that $\operatorname{sgn} \circ \lambda$ is the zero map.
\end{proof}

Thus, we have a splitting $\tau_{\leq 1} E \simeq H\Zbb_2 \vee \Sigma H\Fbb_2$. This enables us to consider the next $k$-invariant.
\[\Sigma^2 H\Fbb_2 \to \tau_{\leq 2} E \to \tau_{\leq 1} E \simeq H \Zbb_2 \vee \Sigma H\Fbb_2 \xrightarrow{\kappa_2} \Sigma^3 H\Fbb_2.\]

Using our knowledge of Steenrod squares, we can write 
\[\kappa_2 \in H^3(H\Zbb_2; \Fbb_2) \oplus H^2(H\Fbb_2; \Fbb_2) \cong \Fbb_2 \{\operatorname{Sq}^3 \circ r_2\} \oplus \Fbb_2 \{\operatorname{Sq}^2\},\]
where $r_2: \Zbb_2 \to \Fbb_2$ denotes reduction modulo $2$.

\begin{definition}
In terms of the basis above, we can write
\[\kappa_2 = a_1 (\operatorname{Sq}^3 \circ r_2) + a_2 (\operatorname{Sq}^2), a_i \in \Fbb_2.\]    
\end{definition}

\begin{lemma}\label{lem::a2_zero}
    The coefficient $a_2$ is equal to $0$.
\end{lemma}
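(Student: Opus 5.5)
The plan is to identify the component of $\kappa_2$ on the summand $\Sigma H\Fbb_2 \subset \tau_{\leq 1} E$ with the $k$-invariant linking $\pi_1$ and $\pi_2$ of $\mathfrak{gl}_1(\mathbb{S}^{\wedge}_2)$, and then to read that $k$-invariant off from the action of $\eta$. By Lemma~\ref{lem::E_info}, the map $\psi: E \to \mathfrak{gl}_1(\mathbb{S}^{\wedge}_2)$ is an isomorphism on $\pi_1$ and $\pi_2$. So the fiber sequence $\Sigma^2 H\Fbb_2 \to \tau_{[1,2]} E \to \Sigma H\Fbb_2$ is equivalent to the corresponding Postnikov piece of $\mathfrak{gl}_1(\mathbb{S}^{\wedge}_2)$. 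Restricting $\kappa_2$ along the inclusion $\Sigma H\Fbb_2 \to H\Zbb_2 \vee \Sigma H\Fbb_2$ gives exactly $a_2 \operatorname{Sq}^2$, and by naturality of $k$-invariants (as in the proof of Lemma~\ref{lem::k1_zero}) this restriction is the $k$-invariant of the $1$-connective cover $\tau_{[1,2]}\mathfrak{gl}_1(\mathbb{S}^{\wedge}_2) \simeq \tau_{[1,2]}\Sigma^{-1}\ldots$. Equivalently, it is the $k$-invariant of $\tau_{[1,2]}(\tau_{\geq 1}\mathfrak{gl}_1)$.

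Next, recall that for a spectrum $F$ with $\pi_1 F = \pi_2 F = \Fbb_2$ concentrated in degrees $1,2$, the $k$-invariant $\Sigma H\Fbb_2 \to \Sigma^3 H\Fbb_2$ is $\operatorname{Sq}^2$ if and only if multiplication by $\eta$, $\pi_1 F \to \pi_2 F$, is nonzero. This is the standard fact detected by $\tau_{[0,1]}\Sbb$, whose $k$-invariant is $\operatorname{Sq}^2$ precisely because $\eta$ is nonzero. So the proof reduces to showing that $\eta \cdot \psi_1(\eta) = 0$ in $\pi_2 \mathfrak{gl}_1(\mathbb{S}^{\wedge}_2)$.

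This is where Proposition~\ref{prop::eta_formula} enters. Taking $x = \eta \in \pi_1(\mathbb{S}^{\wedge}_2)$ gives $\eta \cdot \psi_1(\eta) = \psi_2(\eta^2 + \eta^2) = \psi_2(2\eta^2) = 0$. Hence $\eta$ acts trivially from $\pi_1$ to $\pi_2$ of $\mathfrak{gl}_1$, and therefore $a_2 = 0$.

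The main obstacle is making precise the identification of the $\Sigma H\Fbb_2$-component of $\kappa_2$ with the $k$-invariant of the two-stage piece, and the equivalence between ``$k$-invariant equals $\operatorname{Sq}^2$'' and ``$\eta$ acts nontrivially''. For the latter, I would argue via the Hurewicz and Adams $E_2$-term: $\eta$ is detected by $\operatorname{Sq}^2$. Concretely, if the $k$-invariant were zero, the piece would split as $\Sigma H\Fbb_2 \vee \Sigma^2 H\Fbb_2$, on which $\eta$ acts by zero. Conversely, if the $k$-invariant is $\operatorname{Sq}^2$, then the piece is $\Sigma$ of $\tau_{\leq 1}$ of $\Sbb/2$-like data and $\eta$ acts nontrivially. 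Since $\operatorname{Sq}^2$ is the only nonzero option in $H^2(H\Fbb_2;\Fbb_2)$, the dichotomy is clean.
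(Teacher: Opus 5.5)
Your proposal is correct and is essentially the paper's own proof. Both reduce $a_2 = 0$ to the vanishing of $\eta\colon \pi_1 \to \pi_2$, transport this along the $\pi_{\geq 1}$-isomorphism $E \to \mathfrak{gl}_1(\Sbb^{\wedge}_2)$ of Lemma~\ref{lem::E_info}, and compute $\eta \cdot \psi_1(\eta) = \psi_2(2\eta^2) = 0$ via Proposition~\ref{prop::eta_formula}. Your identification of the $\Sigma H\Fbb_2$-component of $\kappa_2$ with the $k$-invariant of $\tau_{[1,2]}E$ (by naturality along $\tau_{\geq 1}E \to E$) spells out what the paper compresses into ``recall that $\operatorname{Sq}^2$ corresponds to a non-zero action of $\eta$''; the dangling ``$\simeq \tau_{[1,2]}\Sigma^{-1}\ldots$'' should simply be deleted.
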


\begin{proof}
    Suppose $a_2 = 1$ and recall that $\operatorname{Sq}^2$ corresponds to a non-zero action of $\eta$. It suffices for us to show that $\eta$ acts trivially from $\pi_1(E) \to \pi_2(E)$. Since the map $E \to \mathfrak{gl}_1(\mathbb{S}^{\wedge}_2)$ is an isomorphism on $\pi_{\geq 1}$, it suffices to show $\eta$ acts trivially from $\pi_1(\mathfrak{gl}_1(\mathbb{S}^{\wedge}_2)) \to \pi_2(\mathfrak{gl}_1(\mathbb{S}^{\wedge}_2))$.\\
    
    Indeed, by Proposition~\ref{prop::eta_formula}, write $\psi_1: \pi_1(\mathbb{S}^{\wedge}_2) = \Zbb/2\{\eta\} \to \pi_1(\mathfrak{gl}_1(\mathbb{S}^{\wedge}_2))$ to be the canonical isomorphism, then
    \[\eta \cdot \psi_1(\eta) = \psi_2(\eta^2 + \eta^2) = \psi_2(2 \eta^2) = \psi_2(0) = 0.\]
    It follows that $a_2$ has to be $0$.
\end{proof}

\begin{lemma}
    The coefficient $a_1$ is equal to $1$.
\end{lemma}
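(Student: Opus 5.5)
The plan is to detect $a_1$ by evaluating the $k$-invariant $\kappa_2$ on a suitable test class. I would use the multiplicative transfer of Definition~\ref{def::transfer} together with Corollary~\ref{cor::10y_not_zero}, which the paper has evidently prepared for this step. By Lemma~\ref{lem::a2_zero}, $\kappa_2$ restricted to the summand $\Sigma H\Fbb_2$ of $\tau_{\leq 1}E \simeq H\Zbb_2 \vee \Sigma H\Fbb_2$ vanishes. Hence $\kappa_2 = a_1 (\operatorname{Sq}^3 \circ r_2)$ on the $H\Zbb_2$-summand. So a map $w: A \to H\Zbb_2$, extended by zero on $\Sigma H\Fbb_2$, lifts to $\tau_{\leq 2}E$ exactly when $a_1 \operatorname{Sq}^3(r_2 w) = 0$.

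I would argue by contradiction: assume $a_1 = 0$. Then $\tau_{\leq 2}E \simeq H\Zbb_2 \vee \Sigma H\Fbb_2 \vee \Sigma^2 H\Fbb_2$ as spectra. Consequently, for any space $B$, every class of $E^0(B)$, and hence every unit of $(\Sbb^\wedge_2)^0(B)$ whose $\pi_0$-part lies in $U^+$, would have its image in $\tau_{\leq 2}\mathfrak{gl}_1$-cohomology split into an ordinary $\Zbb_2$-cohomology part plus independent $\Fbb_2$-cohomology corrections in degrees $1$ and $2$. Restricted to a $2$-dimensional complex $B$, the unit is then determined by its cohomology data.

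I would take $B = \Rbb P^2$ and the unit $i^*p^{\Sbb^\wedge_2}_{\otimes}(5)$. This is constant-$\pi_0$ data, and its $\pi_0$-component is $\lambda$ of an element of $\Zbb_2$. Using the Burnside-ring norm formula $N(a) = a + \tfrac{a^2-a}{2}[C_2/e]$ together with the identification $[C_2/e] = 2+y$ from the lemma computing $p^{\Sbb}_!(1)$, I get $p_\otimes(5) = 25 + 10y = 25\,(1 + \tfrac{10}{25}y)$. Since $25 = 5^2$ and $\lambda^{-1}(25) = 2$ has $r_2 = 0$, the $\Zbb_2$-cohomology part contributes nothing interesting. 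Under the split hypothesis, the remaining unit $1 + \tfrac{2}{5}\cdot 5y$ would be controlled by the degree-$1$ and degree-$2$ $\Fbb_2$-classes of $\Rbb P^2$. I would then show by naturality in the base (pulling back from the universal case along $\Rbb P^2 \to BC_2$ and along $\Rbb P^1 \to \Rbb P^2$) that these corrections are forced to vanish. That would force $10\,i^*y$ to become trivial after the unit map $\varphi$ to $\operatorname{KO}$, contradicting Corollary~\ref{cor::10y_not_zero}.

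The main obstacle is this last comparison. One must make precise how the multiplicative norm of a class in $E^0$ interacts with the splitting of $\tau_{\leq 2}E$. In other words, one must check that in the split situation the norm of a $U^+$-valued constant is computed by a formula in ordinary cohomology that misses the term $a^2 \in H^2(\Rbb P^2;\Fbb_2)$. By contrast, $w_2(10[L]) = a^2 \neq 0$ from the proof of Corollary~\ref{cor::10y_not_zero} shows that the actual norm does hit it. I would first try to identify the $\operatorname{Sq}^3 r_2$ component directly: on the $C_2$-norm, the degree-$0$ class $r_2\lambda^{-1}(5)=1$ should produce, through the Tate/Euler-class contribution, exactly $\operatorname{Sq}^2$ of the suspended class, i.e.\ the class $a^2$. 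Verifying this identification rigorously is the step I expect to require the most care.
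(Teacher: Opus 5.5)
There is a genuine gap. Your setup matches the paper's: argue by contradiction from $a_1=0$ (so $\kappa_2=0$), use the test class $i^*p^{\Sbb^\wedge_2}_\otimes(5)=\underline{25}+10\,i^*y$, and finish with Corollary~\ref{cor::10y_not_zero}. What is missing is the one step that actually uses $\kappa_2=0$: showing that, under the splitting, $i^*p_\otimes(5)$ must be the constant $\underline{25}$. Your proposed mechanism does not give this.

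\begin{itemize}
\item Naturality in the base cannot force the $H^1$ and $H^2$ corrections to vanish. Restriction along $\Rbb P^2\hookrightarrow BC_2$ is an isomorphism on $H^{\le 2}(-;\Fbb_2)$. Restricting further to $\Rbb P^1$ only forgets the $H^2$-component; it does not show that component is zero.
\item The Tate/Euler-class identification you fall back on is a heuristic, not an argument.
\item The phrase ``the multiplicative norm of a class in $E^0$'' has no meaning until you say how the norm is expressed in $\mathfrak{gl}_1$-cohomology, since $E$ is not a ring spectrum.
\end{itemize}

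The missing idea has two parts. First, the norm on $\Sbb^\wedge_2$ is the additive transfer of the spectrum $\mathfrak{gl}_1(\Sbb^\wedge_2)$, seen through the injection $\iota\colon(\mathfrak{gl}_1\Sbb^\wedge_2)^0(\Rbb P^2)\to(\Sbb^\wedge_2)^0(\Rbb P^2)$, because addition of units is ring multiplication. Second, the additive transfer is precomposition with $\operatorname{tr}$, so it is natural in the coefficient spectrum.

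Once $\kappa_2=0$, a section $s\colon H\Zbb_2\to\tau_{\le 2}E$ gives a spectrum map $h\colon H\Zbb_2\to G=\tau_{\le 2}\mathfrak{gl}_1(\Sbb^\wedge_2)$ with $h_*(1)=5$. Hence
\[p^G_!(5)=h_*\bigl(p^{H\Zbb_2}_!(1)\bigr)=h_*(\underline{2})=\underline{25},\]
because in ordinary cohomology the transfer of $1$ along a double cover is just $2$. This is exactly the ``formula in ordinary cohomology that misses $a^2$'' you flagged as the main obstacle. It comes for free from naturality, with no direct $k$-invariant or Tate computation.

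To finish:
\begin{enumerate}
\item The map $(\mathfrak{gl}_1\Sbb^\wedge_2)^0(\Rbb P^2)\to G^0(\Rbb P^2)$ is injective, since $\Rbb P^2$ has cells only in dimensions $\le 2$ and $\tau_{\ge 3}\mathfrak{gl}_1$ is $2$-connected.
\item Transporting through $\iota$ then gives $i^*P_2(\underline{5})=\underline{25}$, i.e.\ $i^*(10y)=0$ in $(\Sbb^\wedge_2)^0(\Rbb P^2)$.
\item This contradicts Corollary~\ref{cor::10y_not_zero}, via the isomorphism $\widetilde{\Sbb}^0(\Rbb P^2)\cong\widetilde{\Sbb_2^{\wedge}}^0(\Rbb P^2)$.
\end{enumerate}

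A minor slip: $\underline{25}+10y=\underline{25}\,(1+\tfrac{2}{5}y)$, not $1+\tfrac{2}{5}\cdot 5y$.
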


\begin{proof}
    Write $R = \mathbb{S}^{\wedge}_2$. For the ease of notation, we will write $F = \tau_{\leq 2} E$ and $G = \tau_{\leq 2} \mathfrak{gl}_1(R)$. Suppose for contradiction that $a_1 = 0$, then from Lemma~\ref{lem::a2_zero}, we know that $\kappa_2 = 0$. This then gives a section
    \[s: H\Zbb_2 \to F \text{ and we define } h: H\Zbb_2 \xrightarrow{s} F \to G,\]
    where the second map is induced by the pullback map $E \to \mathfrak{gl}_1(R)$. Lemma~\ref{lem::E_info} implies that, after taking $\pi_0$, we have $h_*(1) = 5$ and $h_*(2) = 5^2 = 25$.\\

    Now let $\operatorname{tr}: \Sigma^{\infty}_+ BC_2 \to \mathbb{S}$ denote the transfer map (Definition~\ref{def::transfer}) corresponding to the double cover $EC_2 \to BC_2$. Note that since this is a double cover, we have that $p^{H\Zbb_2}_!(1) = \underline{2}$. By naturality, we have that
    \[p^{G}_!(5) = p^{G}_{!}(h_*(1)) = h_*(p_!^{H\Zbb_2}(1)) = h_*(\underline{2}) = \underline{25}.\]
    Here, $\underline{n}$ is the element in cohomology of $BC_2$ induced by pulling back the number $n$ along the map $BC_2 \to *$.\\

    Now by construction $p_{!}^G(5) = \underline{25}$ lives in $G^0(BC_2)$. Consider the inclusion of the subcomplex $i: \Rbb P^2 \hookrightarrow BC_2$, this then yields a class
    \[i^*(p_{!}^{G}(5)) = i^*(\underline{25}) \in G^0(\Rbb P^2).\]
    Naturality shows that $i^*(\underline{25})$ is also the pullback of the constant class $25$ in $G^0(\Rbb P^2)$, so with a slight abuse of notation,
    \[i^*(p_{!}^{G}(5)) = \underline{25} \in G^0(\Rbb P^2).\]

    Now we will derive a contradiction by showing that $i^*(p_{!}^{G}(5)) \neq \underline{25} \in G^0(\Rbb P^2)$. Recall that $G = \tau_{\leq 2} \mathfrak{gl}_1(\mathbb{S}^{\wedge}_2)$, so the natural map $\mathfrak{gl}_1(\mathbb{S}^{\wedge}_2) \to G$ is an isomorphism in $0$-th cohomology on $\Rbb P^2$. This is because $\Sigma^{\infty}_+ \Rbb P^2$ only has cells in degree $\leq 2$, and $\tau_{\geq 3} \mathfrak{gl}_1(\Sbb^{\wedge}_2)$ is $2$-connected. Therefore, by naturality it suffices to show that
    \[i^*(p_{!}^{\mathfrak{gl}_1(\mathbb{S}^{\wedge}_2)}(\underline{5})) \neq \underline{25} \in (\mathfrak{gl}_1\mathbb{S}^{\wedge}_2)^0(\Rbb P^2).\]
    Now $(\mathfrak{gl}_1(\mathbb{S}^{\wedge}_2))^0(\Rbb P^2)$ admits a natural injection 
    $$\iota: (\mathfrak{gl}_1(\mathbb{S}^{\wedge}_2))^0(\Rbb P^2) \to (\mathbb{S}^{\wedge}_2)^{0}(\Rbb P^2). $$
    The composition of this injection $\iota$ with the additive transfer on $\mathfrak{gl_1}(\mathbb{S}^{\wedge}_2)$ is the multiplicative transfer on $\mathbb{S}^{\wedge}_2$ (see Section 7 of \cite{15fd3722-af20-3529-b950-b5d19ca1d240}), as addition in the units is multiplication from the original ring. Thus, by naturality, it suffices to show that
    \[i^*(p_{\otimes}^{\mathbb{S}^{\wedge}_2}(\underline{5})) \neq \underline{25} \in (\mathbb{S}^{\wedge}_2)^0(\Rbb P^2).\]
    On the other hand, by Remark VIII.1.6.(i) of \cite{h_infty_book}, we can without loss write $p_{\otimes}^{\mathbb{S}^{\wedge}_2}(\underline{5}) = P_2(\underline{5})$, where $P_2$ is the power operation also in \cite{h_infty_book}. Proposition VIII.1.4 of \cite{h_infty_book} gives a formula that
    \[P_2(a + b) = P_2(a) + P_2(b) + p_!^{\Sbb^{\wedge}_2}(ab).\]
    Write $t = p_!^{R}(\underline{1})$. Since $p_!^{R}$ is additive, we know that $nt = p_!^{R}(\underline{n})$. Doing this recursively, we have a formula that $P_2(\underline{n}) = \underline{n} + {n \choose 2} t$, so we can find $P_2(\underline{5})$ to be
    \[P_2(\underline{5}) = \underline{5} + 10 t.\]
    Now write $y = t - \underline{2}$, we have that
    \[P_2(\underline{5}) = \underline{25} + 10 y.\]
    It then suffices to show that the restriction of $10y$ to $\Rbb P^2$ is non-zero (i.e., $i^*(10 y) \neq 0$). This now follows from Corollary~\ref{cor::10y_not_zero} and the fact that the map $\Sbb \to \Sbb^{\wedge}_2$ induces an isomorphism in the $0$-th reduced cohomology $\widetilde{\Sbb}^0(\Rbb P^2) \to \widetilde{\Sbb_2^{\wedge}}^0(\Rbb P^2)$.
\end{proof}

Putting the preceding lemmas together, we now have that:
\begin{corollary}\label{cor::k2}
The $k$-invariant $\kappa_2$ is the map
\[\kappa_2 = \operatorname{Sq}^3 \circ r_2.\]
\end{corollary}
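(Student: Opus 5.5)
This corollary is pure bookkeeping on the lemmas just proved. The plan is to substitute the two computed coefficients into the expansion $\kappa_2 = a_1(\operatorname{Sq}^3 \circ r_2) + a_2\operatorname{Sq}^2$.

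First, recall why this expansion is legitimate. Lemma~\ref{lem::k1_zero} gives the splitting $\tau_{\leq 1}E \simeq H\Zbb_2 \vee \Sigma H\Fbb_2$. Hence
\[[\tau_{\leq 1}E, \Sigma^3 H\Fbb_2] \cong H^3(H\Zbb_2;\Fbb_2) \oplus H^2(H\Fbb_2;\Fbb_2).\]
In low degrees, $H^*(H\Zbb_2;\Fbb_2)$ is $\mathcal{A}/\mathcal{A}\operatorname{Sq}^1$. Its degree-$3$ part is spanned by $\operatorname{Sq}^3 = \operatorname{Sq}^1\operatorname{Sq}^2$ pulled back along $r_2$. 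This uses that $\operatorname{Sq}^2\operatorname{Sq}^1$ vanishes on the integral class, so the only admissible survivor in degree $3$ is $\operatorname{Sq}^3$. In degree $2$, $H^2(H\Fbb_2;\Fbb_2)$ is spanned by $\operatorname{Sq}^2$, since $\operatorname{Sq}^1\operatorname{Sq}^1 = 0$. This confirms the basis stated before the definition of $a_1, a_2$.

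Second, insert the coefficients. Lemma~\ref{lem::a2_zero} gives $a_2 = 0$, via Proposition~\ref{prop::eta_formula} showing that $\eta$ acts trivially from $\pi_1$ to $\pi_2$. The subsequent lemma gives $a_1 = 1$, via the transfer and power operation computation together with Corollary~\ref{cor::10y_not_zero}. Therefore
\[\kappa_2 = 1\cdot(\operatorname{Sq}^3\circ r_2) + 0\cdot \operatorname{Sq}^2 = \operatorname{Sq}^3 \circ r_2.\]
To be precise, the map is $\operatorname{Sq}^3\circ r_2$ on the $H\Zbb_2$ summand and zero on the $\Sigma H\Fbb_2$ summand.

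There is no real obstacle here; all the substance lies in the two preceding lemmas. The one point needing care is that the splitting of $\tau_{\leq 1}E$ is not canonical, since splittings differ by maps $H\Zbb_2 \to \Sigma H\Fbb_2$, and the coefficients $a_i$ were defined relative to a fixed choice. The proofs of both lemmas do not depend on that choice. The proof of $a_2=0$ only uses the $\eta$-action, which is intrinsic. The proof of $a_1=1$ argues by contradiction from $\kappa_2 = 0$ producing a section, which does not depend on the choice either. Moreover, $H^1(H\Zbb_2;\Fbb_2)=0$, so there is no nonzero map $H\Zbb_2 \to \Sigma H\Fbb_2$ and the splitting is in fact unique up to homotopy. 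The identification therefore holds unambiguously.
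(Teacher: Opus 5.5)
Your proposal is correct and matches the paper's proof, which simply substitutes $a_2=0$ (Lemma~\ref{lem::a2_zero}) and $a_1=1$ (the lemma that follows it) into the expansion of $\kappa_2$. Your additional checks are also correct and make explicit what the paper leaves implicit: that $\operatorname{Sq}^3\circ r_2$ and $\operatorname{Sq}^2$ span the relevant groups, and that the splitting of $\tau_{\leq 1}E$ is unique because $H^1(H\Zbb_2;\Fbb_2)=0$.
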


Now we will finally prove Theorem~\ref{thm::alpha_2_zero}, that the class $\alpha^2 \in H^6(Y; \Fbb_2)$ is $0$.

\begin{proof}[Proof of Theorem~\ref{thm::alpha_2_zero}]
In what follows we write $P_j$ for the Postnikov truncations of spaces and $k_j$ for the $k$-invariant. Now recall $T^+ = \Omega^{\infty} \Sigma^3 E$ fitting in the homotopy pullback
    \[\begin{tikzcd}[ampersand replacement=\&]
	{T^+} \& {B^3 \operatorname{GL}_1(\mathbb{S}^{\wedge}_2)} \\
	{K(\mathbb{Z}_2, 3)} \& {K(\mathbb{Z}_2^{\times}, 3)}
	\arrow[from=1-1, to=1-2]
	\arrow[from=1-1, to=2-1]
	\arrow["{\rho_3}", from=1-2, to=2-2]
	\arrow[from=2-1, to=2-2]
\end{tikzcd}.\]
Since $\kappa_1 = 0$ by Lemma~\ref{lem::k1_zero}, we know there is a splitting 
\[P_4 T^+ \simeq K(\Zbb_2, 3) \times K(\Fbb_2, 4).\]
The $5$-th Postnikov truncation fits in
\[K(\Fbb_2, 5) \to P_5 T^+ \to P_4 T^+ \xrightarrow{k_5} K(\Fbb_2, 6).\]
From Corollary~\ref{cor::k2}, we know that $k_5$ is the map $\operatorname{Sq}^3 \circ r_2$\\

Now recall from Lemma~\ref{lem::y_t_plus} there is a map $g^+: Y \to T^+$. We write $g_4$ to denote the composition of $g^+$ with the $4$-th Postnikov truncation to $P_4 T^+$. Since $P_4 T^+$ splits, this by definition can be written in coordinates as $(\Tilde{\alpha}, \beta): Y \to K(\Zbb_2, 3) \times K(\Fbb_2, 4)$ (see Definition~\ref{def::alpha_tilde}). Since $k_5$ is $0$ on the second factor, we then have from Proposition~\ref{prop::reduction} that
\[g_4^* k_5 = \operatorname{Sq}^3(r_2 \circ \Tilde{\alpha}) = \operatorname{Sq}^3 \alpha = \alpha \cup \alpha,\]
where $\operatorname{Sq}^3 \alpha = \alpha \cup \alpha$ as the degree of $\alpha$ is $3$. On the other hand, the definition of the next $k$-invariant by definition compose to $0$ with the truncation map to the previous stage. Thus, we have that
\[0 = g_4^* k_5.\]
This shows that $\alpha \cup \alpha = 0$.
\end{proof}

Combining our previous discussions, we may now prove Theorem~\ref{thm::no_e2_complete}.
\begin{proof}[Proof of Theorem~\ref{thm::no_e2_complete}]
    Assume for contradiction that there is an $\mathbb{E}_2$-map $f: X \to \operatorname{BGL}_1(\mathbb{S}^{\wedge}_2)$ such that $Mf \cong H\Zbb/4$ as $\mathbb{E}_2$-$\mathbb{S}^{\wedge}_2$-algebras. In Section~\ref{sec::construct_alpha}, we constructed a cohomology class $\alpha$ associated to the data of $f$. However, Theorem~\ref{thm::not_zero} shows that $\alpha^2 \neq 0$, and Theorem~\ref{thm::alpha_2_zero} shows that $\alpha^2 = 0$. This causes a contradiction.
\end{proof}

\begin{remark}\label{rem::explain}
The proof that $H\Zbb/4$ is not an $\mathbb{E}_2$-Thom spectrum does not generalize to $H\Zbb/p^k$ for $(p, k) \neq (2, 2)$.
\begin{itemize}
    \item  When $p > 2$, the relevant groups $\pi_1(\mathfrak{gl}_1(\Sbb^{\wedge}_p))$ and $\pi_2(\mathfrak{gl}_1(\Sbb^{\wedge}_p))$ are both $0$, so the argument here collapses.
    \item When $p = 2$ and $k > 2$, the same argument as in Lemma~\ref{lem::rho_non_zero} would produce a map $\rho: \pi_1(X) \to \Zbb_2^{\times}$ that would land in $1 + 2^{k} \Zbb_2$ for $2^k > 4$. This $\rho$ would be $0$ after composing with $\Lambda$, so the argument here collapses.
    \item When $p = 2$ and $k = 1$, the action of $Q^2$ on the generator $x \in H_1(H\Zbb/2; \Fbb_2)$ is actually non-zero (in contrast to the proof of Lemma~\ref{lem::u2_not_zero}), so the argument collapses.
\end{itemize}
\end{remark}

\printbibliography

\end{document}